\newtheorem{thm}{Theorem}[section]
\newtheorem*{thm*}{Theorem}
\newtheorem*{cla*}{Classification}
\newtheorem*{main*}{Main Result}
\newtheorem{cor}[thm]{Corollary}
\newtheorem*{cor*}{Corollary}
\newtheorem{prop}[thm]{Proposition}
\newtheorem{lem}[thm]{Lemma}
\newtheorem{corollary}[thm]{Corollary}
\newtheorem{obs}[thm]{Observation}
\theoremstyle{definition}
\newtheorem{defn}[thm]{Definition}
\newtheorem{ex}[thm]{Example}
\newtheorem{notn}[thm]{Notation}
\newtheorem{remark}[thm]{Remark}
\newtheorem{appr}[thm]{Approach}
\theoremstyle{remark}
 \let\mathscr\relax
\newcommand{\la}{\langle}
\newcommand{\ra}{\rangle}
\newcommand{\ZZ}{\mathbb{Z}}
\DeclareMathOperator{\coker}{coker}
\newcolumntype{P}[1]{>{\centering\arraybackslash}p{#1}}		            
\newtheorem*{lemma*}{lemma}
\newcommand{\onto}{\twoheadrightarrow}
\DeclareMathOperator{\lcm}{lcm}
\newcommand{\Cone}[1]{\operatorname{Cone}\left({#1}\right)}
\newcommand{\FF}{\mathbb{F}}
\newcommand{\GG}{\mathbb{G}}
\DeclareMathOperator{\Img}{Im}
\DeclareMathOperator{\Span}{span}
\title{DG-Sensitive Pruning \& a Complete Classification of DG Trees and Cycles}
\author{Hugh Geller \orcidlink{0000-0002-4012-6404}}
\address{Center for Naval Analyses, Arlington, Virginia 22201 U.S.A.}
\email{\href{mailto:geller.hugh@gmail.com}{geller.hugh@gmail.com}}
\author{Desiree Martin \orcidlink{0000-0002-9297-582X}}
\address{Mathematics Department, Syracuse University, Syracuse, New York 13244 U.S.A.}
\email{\href{mailto:dmarti02@syr.edu}{dmarti02@syr.edu}}
\author{Henry Potts-Rubin \orcidlink{0000-0002-9864-911X}}
\address{Mathematics Department, Syracuse University, Syracuse, New York 13244 U.S.A.}
\email{\href{mailto:hpottsru@syr.edu}{hpottsru@syr.edu} (corresponding author)}
\keywords{differential graded algebra, discrete Morse theory, edge ideal, minimal free resolution, squarefree monomial ideal} %cycle, differential graded algebra, discrete Morse theory, edge ideal, facet ideal, facet-induced subcomplex, induced subgraph, Lyubeznik resolution, minimal free resolution, squarefree monomial ideal, tree
\pgfplotsset{compat=1.18}
\begin{document}

\begin{abstract}
    Given a squarefree monomial ideal $I$ of a polynomial ring $Q$, we show that if the minimal free resolution $\mathbb{F}$ of $Q/I$ admits the structure of a differential graded (dg) algebra, then so does any ``pruning" of $\mathbb{F}$.  In the language of combinatorics, this says that if $Q/\mathcal{F}(\Delta)$, the quotient of the ambient polynomial ring by the facet ideal $\mathcal{F}(\Delta)$ of a simplicial complex $\Delta$, is minimally resolved by a dg algebra, then so is the quotient by the facet ideal of each facet-induced subcomplex of $\Delta$ (over the smaller polynomial ring).  Along with techniques from discrete Morse theory and homological algebra, this allows us to give complete classifications of the trees and cycles $G$ with $Q/I(G)$ minimally resolved by a dg algebra in terms of the length of the longest path in $G$, where $I(G)$ is the edge ideal of $G$.  
\end{abstract}

\maketitle

% \tableofcontents 

\section*{Acknowledgments}

We thank Trung Chau for discussions on Morse matchings and Lyubeznik graphs and the anonymous reviewers for their helpful comments.  We are also grateful for \textit{Macaulay2} \cite{M2}, which we used to compute examples of resolutions of edge ideals of diameter-four trees, leading us to conjecture and prove that these resolutions are as described in Section \ref{d4section}. Lastly, we would like to acknowledge the Fairfax Algebra Days conference that occurred in March 2024, from which this paper resulted.

\section{Introduction}

Differential graded (dg) algebras are powerful tools in homological commutative algebra.  For example, the existence of such structure reduces the difficulty in computing certain cohomologies and can be leveraged to give universal resolutions of modules, such as bar resolutions (see, for example, \cite{Iyengar,henry}).  In \cite{BooGri}, Boocher et al. use dg algebras to study deviations related to edge ideals, and dg algebras make a significant appearance in the hot-topic study of cohomological support varieties (see, for example, \cite{josh}).  Furthermore, commutative algebra is not the only area of mathematics in which one encounters dg algebras.  First arising in algebraic topology, their use in homological commutative algebra was spearheaded by Avramov, Buchsbaum, and Eisenbud, to name only a few.  

%In \cite{Briggs}, Briggs relies on methods arising from dg algebra structures to prove a conjecture of Vasconcelos on the conormal module \cite{wvv}, and dg algebras make a significant appearance in the hot-topic study of cohomological support varieties (see, for example, \cite{josh}).  

Unfortunately, the question of whether a given resolution admits the structure of a dg algebra is a difficult one.  To show existence, one often has to describe the multiplication explicitly.  The typical first examples of dg algebras one encounters are the Taylor resolution \cite{Gemeda} and the Koszul complex.  Tests for non-existence of dg algebra structure include results due to: Avramov \cite{avr}, in terms of the non-vanishing of kernels of maps between certain Tor modules; Katth\"an \cite{Kat}, in terms of $f$-vectors of simplicial complexes; and Burke \cite{Burke}, in terms of higher homotopies and $A_\infty$-structures. Notoriously, these obstructions are challenging to compute.  In this paper, we contribute to the story of dg algebra structures on resolutions in the world of squarefree monomial ideals.

\begin{main*}[Theorem \ref{pruningisdg}, Corollary \ref{iteratepruning}]
    Let $I$ be a squarefree monomial ideal of a polynomial ring $Q=\Bbbk[x_1,\ldots,x_n]$, and let $\mathbb{F}$ be the minimal $Q$-free resolution of $Q/I$.  Let $I'$ be the ideal of $R:=Q/(x_{k_1},\ldots,x_{k_\ell})$ obtained from $I$ by setting $x_{k_1}=\ldots=x_{k_\ell}=0$, i.e., $I'=I/(x_{k_1},\ldots,x_{k_\ell})$.  If $\mathbb{F}$ admits the structure of a differential graded algebra, then the minimal $R$-free resolution of $R/I'$ admits the structure of a differential graded algebra. 
\end{main*}

An active area of research within combinatorial commutative algebra is relating invariants of (finite simple) graphs to algebraic properties of their edge ideals.  For example, Fr\"oberg's Theorem \cite{Froberg} establishes an equivalence between the cochordality of a graph and the linearity of the resolution of its edge ideal.  Resolutions encode a wealth of algebraic information, such as graded Betti numbers and projective dimension.  As a consequence of Theorem \ref{pruningisdg}, we get a result relating dg algebra structure and facet ideals of facet-induced subcomplexes and thus edge ideals of induced subgraphs (see Definition \ref{facetinduced}).    
 %corollary now, not theorem, that's all
\begin{cor*}[Corollary \ref{simplex}]
     Let $\Delta$ be a simplicial complex such that the minimal $Q$-free resolution of $Q/\mathcal{F}(\Delta)$ admits the structure of a differential graded algebra, where $\mathcal{F}(\Delta)$ is the facet ideal of $\Delta$ and $Q$ is the ambient polynomial ring on the vertices of $\Delta$.  If $\Delta'$ is a facet-induced subcomplex of $\Delta$ and $Q'$ is the ambient polynomial ring of $\Delta'$, then the minimal $Q'$-free resolution of $Q'/\mathcal{F}(\Delta')$ admits the structure of a differential graded algebra.
\end{cor*}

In the case of edge ideals, Corollary \ref{simplex} may be expressed as follows.  

\begin{cor*}[Corollary \ref{main}]
     Let $G$ be a graph such that the minimal $Q$-free resolution of $Q/I(G)$ admits the structure of a differential graded algebra, where $I(G)$ is the edge ideal of $G$ and $Q$ is the ambient polynomial ring of $G$.  If $G'$ is an induced subgraph of $G$ and $Q'$ is the ambient polynomial ring of $G'$, then the minimal $Q'$-free resolution of $Q'/I(G')$ admits the structure of a differential graded algebra.
\end{cor*}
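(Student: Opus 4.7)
The plan is to deduce this from Theorem \ref{simplex} by recognizing the edge ideal of a graph as a special instance of a facet ideal. Given a finite simple graph $G$ with vertex set $\{x_1,\ldots,x_n\}$, I would associate to $G$ the one-dimensional simplicial complex $\Delta_G$ whose facets are precisely the $1$-simplices $\{x_i,x_j\}$ with $\{x_i,x_j\}\in E(G)$. Under this assignment, the facet ideal $\mathcal{F}(\Delta_G)$ is generated by the squarefree quadratic monomials $x_ix_j$ with $\{x_i,x_j\}\in E(G)$, which is exactly $I_G$; moreover the ambient polynomial ring of $\Delta_G$ agrees with the ambient polynomial ring $Q$ of $G$.

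Next I would unpack Definition \ref{facetinduced} and verify that, under the correspondence $G\leftrightarrow\Delta_G$, the notion of facet-induced subgraph matches that of facet-induced subcomplex. Concretely, if $G'$ is a facet-induced subgraph of $G$ arising from a choice of edges $E'\subseteq E(G)$ together with the vertices they cover, then $\Delta_{G'}$ is obtained from $\Delta_G$ by selecting the corresponding $1$-simplices (and the vertices they contain), so $\Delta_{G'}$ is a facet-induced subcomplex of $\Delta_G$. In particular the ambient polynomial ring $Q'$ of $G'$ coincides with the ambient polynomial ring of $\Delta_{G'}$, and $\mathcal{F}(\Delta_{G'})=I_{G'}$ as ideals of $Q'$.

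With these identifications in hand, the corollary is immediate from Theorem \ref{simplex}: the hypothesis that the minimal $Q$-free resolution of $Q/I_G=Q/\mathcal{F}(\Delta_G)$ admits a dg algebra structure supplies the hypothesis of that theorem for $\Delta_G$, and the theorem then guarantees that the minimal $Q'$-free resolution of $Q'/\mathcal{F}(\Delta_{G'})=Q'/I_{G'}$ admits a dg algebra structure, which is the stated conclusion.

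I do not expect any serious obstacle here: the argument is purely a translation between the language of graphs and that of one-dimensional simplicial complexes, followed by an application of a result already established in the paper. The only point requiring care is the routine verification that the definition of facet-induced subgraph in Definition \ref{facetinduced} is set up to be compatible with the one-dimensional specialization of facet-induced subcomplex, in particular regarding which vertices are retained.
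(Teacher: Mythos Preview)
Your approach is exactly the paper's: the corollary is stated immediately after Theorem \ref{simplex} with the one-line justification ``by restricting $\Delta$ to be a graph,'' and your proposal simply unpacks that restriction. One small correction: a facet-induced subgraph is determined by a choice of \emph{vertices} $W\subseteq V$ (keeping precisely the edges with both endpoints in $W$), not by an arbitrary choice of edges $E'\subseteq E(G)$---for instance, removing a single edge from a triangle does not yield a facet-induced subgraph---so your parenthetical description should be adjusted, though this does not affect the argument since the paper defines ``facet-induced subgraph'' as nothing other than the graph case of Definition \ref{facetinduced}.
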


Combining Corollary \ref{main} with techniques from discrete Morse theory and homological algebra, we obtain the following classifications about the ``dg-ness" of trees and cycles based on the length of the longest path they contain.  

\begin{cla*}[Theorem \ref{classification}]
    Let $\Gamma$ be a tree of diameter $d$.  The minimal $Q$-free resolution of $Q/I(\Gamma)$ admits the structure of a differential graded algebra if and only if $d \leq 4$.
\end{cla*}

\begin{cla*}[Theorem \ref{classification2}]
    Let $C_n$ be the cycle on $n$ vertices.  The minimal $Q$-free resolution of $Q/I(C_n)$ admits the structure of a differential graded algebra if and only if $n \leq 5$.  
\end{cla*}

The outline of sections is as follows.  In Section \ref{background}, we develop the background necessary to discuss dg algebra structures on resolutions of edge ideals.  In Section \ref{Lyubezniksection}, we start the process of proving the existence portion of Theorem \ref{classification}, using tools from discrete Morse theory to show that if the minimal free resolution $\mathbb{F}$ of $Q/I(G)$ is a Lyubeznik resolution, then $\mathbb{F}$ admits the structure of a dg algebra (Theorem \ref{Lyubeznikdg}).  Trees of diameter three fall under this umbrella.  In Section \ref{d4section}, we suitably ``glue together" Taylor resolutions to construct the minimal free resolution of quotients by edge ideals of diameter-four trees and develop a dg algebra structure on the resulting resolution which respects the multiplication coming from the component Taylor resolutions (Definition \ref{defn:treeProd} and Corollary \ref{cor:treeProd}).  The non-existence side of the story is discussed in Section \ref{pruningsection}, where we describe a process due to Boocher (Definition \ref{pruningprocess}) which, for certain classes of ideals, ``prunes" the minimal free resolution $\mathbb{F}$ of $Q/I$ to the minimal free resolution $P(\mathbb{F},Z)$ of the ``pruned" quotient $Q'/I'$ \cite{boocher}.  We show that in some cases this ``pruning process" is ``dg-sensitive," i.e., the existence of a dg algebra structure on $\mathbb{F}$ implies the existence of a dg algebra structure on $P(\mathbb{F},Z)$.  Pruning combines with results of Avramov (Example \ref{avramov}) and Katth\"an (Proposition \ref{5path}) about the non-existence of a dg algebra structure on the minimal free resolution of $Q/I(P_6)$, where $P_6$ is the path on six vertices and of diameter five, to complete the classification for trees.  In Section \ref{class} we complete the story for cycles using discrete Morse theory and a theorem of Katth\"an \cite{Kat} and state the classification Theorems \ref{classification} and \ref{classification2}.  Finally, in Appendices \ref{append:GC}, \ref{append:assoc}, and \ref{append:LR}, we provide computations used to prove the existence of the dg algebra structure developed in Section \ref{d4section}.  

\section{Background}\label{background}

We begin with the necessary background to discuss dg algebra structures on resolutions, particularly those induced by Morse matchings.  We cover the Taylor and Lyubeznik resolutions and introduce discrete Morse theory, which we use in Section \ref{Lyubezniksection} to show that when the minimal free resolution of an edge ideal is a Lyubeznik resolution, it admits the structure of a dg algebra.  Throughout this paper, $Q$ is an ambient polynomial ring.  The number of variables, if relevant, will be clear from the context.  By a ``graph," we mean a finite simple undirected graph.    

\begin{defn}\label{edgeideal}
    Let $G=(V,E)$ be a graph with vertex set $V=\{x_1,\ldots,x_n\}$ and edge set $E$.  The \textit{edge ideal} $I(G)$ of $G$ is the ideal of $Q=\Bbbk[x_1,\ldots,x_n]$ generated by the squarefree quadratic monomials $x_ix_j$ such that $\{x_i,x_j\} \in E$.   
\end{defn}

\begin{ex}\label{edgeidealex}
    Let $G$ be the graph in Figure \ref{L(0,1,2)}.  The edge ideal of $G$ is the ideal $I(G)$ of $\Bbbk[x_1,x_2,x_3,x_4,x_5]$ given by $I(G)=(x_1x_2,x_1x_5,x_2x_5,x_3x_5,x_4x_5)$.  
    \begin{figure}[H]
    \centering\begin{tikzpicture}
			%% square vertices
			\draw[fill=black] (1,0) circle (3pt);
			\draw[fill=black] (2,0) circle (3pt);
			\draw[fill=black] (3,0) circle (3pt);
			\draw[fill=black] (2.5,1) circle (3pt);
                \draw[fill=black] (1.5,1) circle (3pt);

            \node[below = 2] at (1,0)  {$x_1$};
            \node[below = 2] at (2,0)  {$x_5$}; 
            \node[below = 2]  at (3,0)  {$x_3$};
            \node[above = 2] at  (1.5,1)  {$x_2$};
            \node[above = 2] at (2.5,1) {$x_4$};
			
			\draw[thick] (2,0) -- (2.5,1);
                \draw[thick] (1,0) -- (1.5,1);
                \draw[thick] (2,0) -- (1.5, 1);
			\draw[thick] (1,0) -- (2,0) -- (3,0);

	\end{tikzpicture}\caption{The graph $G=L(0,2,1)$}\label{L(0,1,2)}\end{figure}
\end{ex}

We are concerned with the minimal $Q$-free resolution of the quotient $Q/I(G)$ for certain graphs $G$.  To talk about these resolutions, we need to understand the structure of the Taylor resolution of a monomial ideal.  A common theme for combinatorially-defined resolutions of $Q/I$ is the need for a choice of ordering on the minimal generators of the ideal $I$.  By abuse of language, we will say both that $<$ is a total order on the minimal generators $u_1, \ldots,u_t$ of an ideal $I$ and that $<$ is a total order on the set $G(I)$ of minimal generators of $I$.  Implicit in Definition \ref{taylordef} is that the Taylor resolution is indeed a resolution \cite{taylor}.      

\begin{defn}[Taylor, 1966]\label{taylordef}
     Let $I$ be a monomial ideal of $Q=\Bbbk[x_1,\ldots,x_n]$ generated by the monomials $u_1, \ldots, u_t$, and let $<$ be a total order on these generators.  For $U \subseteq \{u_1,\ldots,u_t\}$, set $m_U=\lcm\{u_j \mid u_j \in U\}$.  The \textit{Taylor resolution} $\mathbb{T}$ of $Q/I$ is the $Q$-free resolution with
     \[
     \mathbb{T}_i=Q^{\binom{t}{i}},
     \]
     which has basis $e_U$, where $U$ is a subset of $\{u_1,\ldots,u_t\}$ of size $i$.  The differential of $\mathbb{T}$ is given by
     \[
     \partial(e_U)=\sum_{u \in U} (-1)^{\sigma(u,U)}\dfrac{m_U}{m_{U\backslash\{u\}}}e_{U \backslash\{u\}},
     \]
     where $\sigma(u,U)=|\{v \in U : v<u\}|$. 
     
     It is often notationally advantageous to replace $u_j$ by the index $j$ and define the Taylor resolution mutatis mutandis, using basis elements $e_U$, where $U$ is a subset of $[t]$ of size $i$.   
\end{defn} 

\begin{remark}
    It is commonplace to take $\{u_1,\ldots,u_t\}$ to be the minimal set of generators $G(I)$ of $I$, and this is the situation in which we will typically work.  
\end{remark}

Any two orderings on $G(I)$ induce isomorphic Taylor resolutions.  The chosen order will be specified in the first differential.  

\begin{ex}\label{taylorresnex}
    Let $Q=\Bbbk[x,y,z,w]$ and $I=(xw,yz,xz,xy)$.  The Taylor resolution $\mathbb{T}$ of $Q/I$ is \small
    \[
    0 \xrightarrow{} Q \xrightarrow{\left[\begin{smallmatrix}
        -1\\1\\-1\\w
    \end{smallmatrix}\right]} Q^4 \xrightarrow{\left[\begin{smallmatrix}
        1&1&0&0\\-y&0&y&0\\0&-z&-z&0\\w&0&0&1\\0&w&0&-1\\0&0&w&1
    \end{smallmatrix}\right]} Q^6 \xrightarrow{\left[\begin{smallmatrix}
        -yz&-z&-y&0&0&0\\xw&0&0&-x&-x&0\\0&w&0&y&0&-y\\0&0&w&0&z&z
    \end{smallmatrix}\right]} Q^4 \xrightarrow{\begin{bmatrix}
        xw&yz&xz&xy
    \end{bmatrix}} Q \xrightarrow{} 0
    \] \normalsize
    The chosen order, indicated by the first differential, is $xw<yz<xz<xy$.  Notice that $\mathbb{T}$ is not minimal.  
\end{ex}

Our main result (Theorem \ref{pruningisdg}) discusses the existence of a dg algebra structure, a notion we now define. 

    \begin{defn}\label{dga}
A \textit{differential graded (dg) algebra} over a ring $Q$ is a complex $(\mathcal{A},\partial^\mathcal{A})$ of free $Q$-modules equipped with a unitary, associative multiplication $\mathcal{A} \otimes_Q \mathcal{A} \to \mathcal{A}$ satisfying 
\begin{itemize}
    \item[(i)] $\mathcal{A}_i\mathcal{A}_j \subseteq \mathcal{A}_{i+j}$,
    \item[(ii)] $a_ia_j=(-1)^{ij}a_ja_i$,
    \item[(iii)] $a_i^2=0 \text{ if $i$ is odd}$,
    \item[(iv)] $\partial^\mathcal{A}(a_ia_j)=\partial^\mathcal{A}(a_i)a_j+(-1)^{i}a_i\partial^\mathcal{A}(a_j)$,
\end{itemize}
where $a_\ell \in \mathcal{A}_\ell$.  Conditions (i)-(iii) are together called \textit{graded commutativity}, and condition (iv) is called the \textit{Leibniz rule}. 
    \end{defn}  

Throughout, we will use the fact that the Taylor resolution admits the structure of a dg algebra (Example \ref{taylorex}), a celebrated result due to Gemeda \cite{Gemeda}.  However, the Taylor resolution is often far from minimal (Example \ref{taylorresnex}).      

\begin{ex}[Gemeda, 1976]\label{taylorex}
    Let $I$ be a monomial ideal of $Q=\Bbbk[x_1,\ldots,x_n]$ generated by the monomials $u_1, \ldots, u_t$, and let $<$ be a total order on these generators.  Let $U \subseteq \{u_1, \ldots, u_t\}$, and again set $m_U=\lcm\{u_j \mid u_j \in U\}$.  The Taylor resolution $\mathbb{T}$ of $Q/I$ admits the structure of a dg algebra under the product
    \[
e_V\cdot e_W=\begin{cases}
    (-1)^{\sigma(V,W)}\dfrac{m_Vm_W}{m_{V \cup W}}e_{V \cup W},& V \cap W=\emptyset,\\0,&V \cap W \neq \emptyset,
\end{cases}
\]
where $e_U$ is a basis element of $\mathbb{T}$ in degree $|U|$ and $\sigma(V,W)=|\{(v,w)\in V \times W : v>w\}|$.  
\end{ex}

Not every module's minimal free resolution admits the structure of a dg algebra.  The most famous example of this fact is due to Avramov \cite{avr}.    

\begin{ex}[Avramov, 1981]\label{avramov}
    Let $Q=\Bbbk[x,y,z,w]$ and $I=(x^2,xy,yz,zw,w^2)$.  The minimal $Q$-free resolution of $Q/I$ does not admit the structure of a dg algebra.    
\end{ex}

In \cite{Kat}, Katth\"an translates Example \ref{avramov} to the world of edge ideals.  

\begin{prop}[Katth\"an, 2019]\label{5path}
    The minimal $Q$-free resolution of $Q/I(P_6)$ does not admit the structure of a differential graded algebra, where $P_6$ is the path on six vertices.  
\end{prop}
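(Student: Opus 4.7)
The plan is to reduce the claim to Avramov's original example (Example \ref{avramov}) by invoking polarization. Label the vertices of $P_5$ so that the path reads $x_2 - x_1 - y - z - w_1 - w_2$; then the edge ideal is
\[
I_{P_5} = (x_1 x_2,\, x_1 y,\, yz,\, z w_1,\, w_1 w_2) \subseteq Q = \Bbbk[x_1, x_2, y, z, w_1, w_2].
\]
The first observation is that $I_{P_5}$ is precisely the polarization of Avramov's ideal $I = (x^2,\, xy,\, yz,\, zw,\, w^2)$ living in the smaller ring $Q' := \Bbbk[x,y,z,w]$. Consequently, $\mathbf{y} := (x_1 - x_2,\, w_1 - w_2)$ is a regular sequence on $Q/I_{P_5}$, and the identifications $x_1 = x_2 = x$, $w_1 = w_2 = w$ yield $Q/(I_{P_5}, \mathbf{y}) \cong Q'/I$.

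Suppose for contradiction that the minimal $Q$-free resolution $\mathbb{F}$ of $Q/I_{P_5}$ admits a dg algebra structure. I would then examine the reduced complex $\mathbb{F}' := \mathbb{F} \otimes_Q Q/(\mathbf{y})$. Three things need checking. First, $\mathbb{F}'$ is a free resolution of $Q'/I$, since $\mathbf{y}$ is regular on $Q/I_{P_5}$. Second, $\mathbb{F}'$ is minimal: the entries of its differentials still lie in the homogeneous maximal ideal of $Q'$ (each generator of $Q$ lands in this maximal ideal after depolarization), and the ranks of $\mathbb{F}'$ coincide with the Betti numbers of $Q'/I$, which agree with those of $Q/I_{P_5}$ by the standard Betti-preservation property of polarization. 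Third, the dg algebra structure descends, because $\mathbf{y}\cdot\mathbb{F}$ is a dg ideal of $\mathbb{F}$: both the differential and the multiplication are $Q$-linear, so they preserve this subcomplex.

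Combining the three observations, $\mathbb{F}'$ would be a minimal $Q'$-free resolution of $Q'/I$ endowed with a dg algebra structure, contradicting Example \ref{avramov}. Hence no such dg structure can exist on $\mathbb{F}$ in the first place.

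The main obstacle I expect is the third step: although the slogan ``a quotient of a dg algebra by a dg ideal is a dg algebra'' is standard, one must verify explicitly that graded commutativity, the constraint $a_i^2 = 0$ for odd $i$, and the Leibniz rule all survive reduction modulo $\mathbf{y}$. Because $\mathbf{y} \subseteq Q$ acts centrally on $\mathbb{F}$ and commutes with the differential, the checks are routine in principle, but they deserve careful writeup in order to guarantee that the inherited product on $\mathbb{F}'$ satisfies Definition \ref{dga} in full.
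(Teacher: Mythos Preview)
Your proposal is correct and follows essentially the same approach as the paper: both recognize $I_{P_5}$ as the polarization of Avramov's ideal and deduce the result from the fact that depolarization is dg-sensitive. The paper simply asserts this dg-sensitivity, whereas you spell out the mechanism (tensoring down along the depolarizing regular sequence $\mathbf{y}$ and checking that $\mathbf{y}\mathbb{F}$ is a dg ideal); your concern about the third step is unwarranted, as the verification is indeed routine.
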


\iffalse
\begin{proof}
    The edge ideal $I(P_6)$ is the polarization of the ideal in Avramov's example (Example \ref{avramov}). Depolarization is ``dg-sensitive," i.e., the minimal free resolution of the quotient by an ideal $I$ is minimally resolved by a dg algebra if the quotient by po$(I)$ is.  Thus, the minimal free resolution of $Q/I(P_6)$ does not admit the structure of a dg algebra.    
\end{proof}
\fi

For the sake of brevity, we make the following definition.

\begin{defn}\label{dggraph}
    Let $G$ be a graph.  If the minimal free resolution of $Q/I(G)$ admits the structure of a dg algebra, then we say that $G$ is a \textit{dg graph} or simply that $G$ (or $I(G)$) is \textit{dg}.    
\end{defn}

We are interested in relating the existence of a dg algebra structure on the minimal free resolution of $Q/I(G)$ to invariants of the graph $G$.  Theorems \ref{classification} and \ref{classification2} relate such existence for trees and cycles to the length of the longest path in $G$.
 
\begin{defn}\label{diameter}
    The \textit{diameter} of a graph $G$ is the maximum among the lengths of the shortest paths between vertices in $G$.    
\end{defn}

\begin{remark}
    For a tree $\Gamma$, the length of the longest path in $\Gamma$ and the diameter of $\Gamma$ are equivalent.
\end{remark}

The length of the longest path in a graph $G$ need not be the diameter of $G$ if $G$ is not a tree.  

\begin{ex}
    The diameter of the cycle $C_5$ on $5$ vertices is $2$, but the length of the longest path in $C_5$ is $4$.  
\end{ex}

Example \ref{taylorex} leads us to comment on the existence of dg algebra structure on the minimal free resolution of $Q/I(\Gamma)$ for trees $\Gamma$ of small diameter.  

\begin{obs}\label{d012obs}
    Let $\Gamma$ be a tree of diameter zero, one, or two. The minimal free resolution of $Q/I(\Gamma)$ is the Taylor resolution on the minimal monomial generators of $I(\Gamma)$ and thus admits the structure of a differential graded algebra. That is, trees of diameter zero, one, and two are all dg. 
\end{obs}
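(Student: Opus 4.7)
The plan is simply to reduce to Gemeda's theorem (Example \ref{taylorex}) by showing that in each of the three diameter regimes the Taylor resolution of $Q/I_\Gamma$ is already minimal; once that is established, the dg structure recorded in Example \ref{taylorex} transfers verbatim to the minimal free resolution.

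First I would dispose of the degenerate cases. A tree $\Gamma$ of diameter $0$ is a single vertex, so $I_\Gamma = 0$, $Q/I_\Gamma = Q$, and the minimal free resolution is $Q$ concentrated in degree $0$, a dg algebra under ordinary multiplication. A tree of diameter $1$ is a single edge on vertices $x,y$, so $I_\Gamma = (xy)$ and the minimal resolution is the length-one Koszul complex $0 \to Q \xrightarrow{xy} Q \to 0$, which coincides with the Taylor resolution on the one generator $xy$.

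The only substantive case is diameter $2$. Here the standard observation is that a tree of diameter $2$ is necessarily a star $K_{1,n}$: if $x_0$ is any internal vertex of a longest path, then every other vertex of $\Gamma$ must be adjacent to $x_0$, else we could prolong the path. Label the leaves $x_1, \ldots, x_n$, so that $I_\Gamma = (x_0 x_1, x_0 x_2, \ldots, x_0 x_n)$. For any nonempty subset $U \subseteq G(I_\Gamma)$ one computes
\[
m_U = x_0 \prod_{x_0 x_j \in U} x_j,
\]
and hence for each $x_0 x_i \in U$ the entry
\[
\frac{m_U}{m_{U \smallsetminus \{x_0 x_i\}}} = x_i
\]
appearing in the Taylor differential is a non-unit. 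Thus every entry of every differential of the Taylor resolution $\mathbb{T}$ lies in the maximal ideal, so $\mathbb{T}$ is minimal. By Example \ref{taylorex}, $\mathbb{T}$ carries a dg algebra structure, and we conclude that $\Gamma$ is a dg graph.

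The only potential obstacle is the structural claim that a diameter-$2$ tree is a star; this is elementary but worth noting, since it is what lets the Taylor resolution stay minimal (more general trees of diameter $2$ would fail to exist, and trees of diameter $\geq 3$ acquire lcm collisions that force non-minimality of $\mathbb{T}$). No explicit computation of the differentials or of the product formula is required beyond quoting Gemeda's example.
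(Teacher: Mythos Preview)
Your proof is correct and takes essentially the same approach as the paper: both show the Taylor resolution is minimal by checking that no ratio $m_U/m_{U\smallsetminus\{u\}}$ is a unit, using the fact that each generator of $I_\Gamma$ is divisible by a variable appearing in no other generator. The paper phrases this uniformly (``each $u\in G(I_\Gamma)$ is divisible by a unique variable''), avoiding the separate case analysis and the explicit identification of the star $K_{1,n}$, but the content is the same.
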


\begin{proof}
    Units appear in the differential of the Taylor resolution if and only if $m_U=m_{U\backslash \{u\}}$ for some subset $U$ of the minimal generators $G(I(\Gamma))$ of $I(\Gamma)$ and some $u \in U$ if and only if there exists $u \in G({I(\Gamma)})$ such that $m_{G(I(\Gamma))}=m_{G(I(\Gamma)) \backslash \{u\}}$.  When $\Gamma$ has diameter at most two, each $u \in G(I(\Gamma))$ is divisible by a unique variable, and so such an equality is impossible.  
\end{proof}

While always a dg algebra, the Taylor resolution is often far from minimal. A class of resolutions associated to monomial ideals which are both more-often minimal and typically closer to minimal than the Taylor resolution are the Lyubeznik resolutions \cite{Lyubeznik}.  Lyubeznik resolutions, even minimal ones, need not admit the structure of a dg algebra (see, for example, \cite[Corollary 5.3]{Kat}).  

\begin{defn}[Lyubeznik, 1988]\label{Lyubeznikresn}
For a total order $<$ on the minimal generators $G(I)$ of a monomial ideal $I$ of $Q$, the \textit{Lyubeznik complex} $\mathbb{L}_< \subseteq \mathbb{T}$ is generated by those basis elements $e_{\{u_{i_1}<\ldots<u_{i_s}\}}$ of $\mathbb{T}$ such that for every $1 \leq j < s$ and every $q$ with $u_q<u_{i_j}$, we have $u_q \nmid \lcm(u_{i_j},\ldots,u_{i_s})$.
    %WHAT WE HAD% Let $<$ be a total order on the minimal generators $u_i$ of a monomial ideal $I$ of $Q$.  The subcomplex $\mathbb{L}_<$ of $\mathbb{T}$ generated in every dimension $s$ by the elements $e_{\{u_{i_1},\ldots,u_{i_s}\}}$, $u_{i_1}<\ldots<u_{i_s}$ such that $u_q$ does not divide $\lcm(u_{i_t},u_{i_{t+1}},\ldots,u_{i_s})$ for all $u_{i_t}<u_{i_s}$ and $u_q<u_{i_t}$ is a free resolution of $Q/I$ over $Q$ called the \textit{Lyubeznik resolution} of $Q/I$ with respect to $<$.
\end{defn}

\begin{thm}[Lyubeznik, 1988]\label{Lyubeznikresnpf}
    The Lyubeznik complex $\mathbb{L}_<$ resolves $Q/I$ over $Q$, regardless of $<$.  We thus call $\mathbb{L}_<$ the \textit{Lyubeznik resolution} of $Q/I$ with respect to $<$.    
\end{thm}

Unlike with the Taylor resolution, distinct orderings on $G(I)$ may (or may not) induce nonisomorphic Lyubeznik resolutions.      

\begin{ex}\label{Lyubeznikresnex}
    Let $G$ be the graph in Figure \ref{L(1,1,1)}. The edge ideal $I(G)$ of $G$ is
    \[
    I(G) = (xy, xz, yz, xx_1, yy_1).
    \]
\begin{figure}[H]
    \centering\begin{tikzpicture}
			%% square vertices
			\draw[fill=black] (0,0) circle (3pt);
			\draw[fill=black] (1,0) circle (3pt);
			\draw[fill=black] (3,0) circle (3pt);
			\draw[fill=black] (4,0) circle (3pt);
                \draw[fill=black] (2,1.5) circle (3pt);

            \node[below = 2] at (0,0) {$x_1$};
            \node[below = 2] at (1,0)  {$x$}; 
            \node[below = 2]  at (3,0)  {$y$};
            \node[below = 2] at  (4,0)  {$y_1$};
            \node[above = 2] at (2,1.5) {$z$};
			
			\draw[thick] (3,0) -- (2,1.5);
                \draw[thick] (1,0) -- (2,1.5);
			\draw[thick] (0,0) -- (1,0) -- (3,0) -- (4,0);

	\end{tikzpicture}\caption{The graph $G=L(1,1,1)$}\label{L(1,1,1)}
    \end{figure}
Different total orders on the generators of $I(G)$ may induce nonisomorphic Lyubeznik resolutions over $Q = \Bbbk[x, y, x_1, y_1, z]$. Indeed, consider the following three total orders:

\begin{enumerate}
    \item The ordering $xy < xz < yz < xx_1 < yy_1$ has corresponding Lyubeznik resolution 
    \[
    0 \xrightarrow{} Q^2 \xrightarrow{\begin{bmatrix}
        x_1 & 0 \\ 0 & y_1 \\ -z & 0 \\ 0 & -z \\ y & 0 \\ 0 & x \\
    \end{bmatrix}} Q^6 \xrightarrow{\begin{bmatrix}
       -z & -z & -x_1 & -y_1 & 0 & 0 \\ y & 0 & 0 & 0 & -x_1 & 0 \\ 0 & x & 0 & 0 & 0 & -y_1 \\ 0 & 0 & y & 0 & z & 0 \\ 0 & 0 & 0 & x & 0 & z 
    \end{bmatrix}} Q^5 \xrightarrow{\begin{bmatrix}
        xy & xz & yz& xx_1 & yy_1
    \end{bmatrix}} Q \xrightarrow{} 0.
    \]

    \item The ordering $xz < yz < yy_1 < xy < xx_1$ has corresponding Lyubeznik resolution 
    \[
    0 \xrightarrow{} Q \xrightarrow{\begin{bmatrix}
        0 \\ -x_1 \\ 1 \\ -y_1 \\ z
    \end{bmatrix}} Q^5 \xrightarrow{\partial_2} Q^8 \xrightarrow{\partial_1} Q^5 \xrightarrow{\begin{bmatrix}
      xz & yz & yy_1 &  xy & xx_1 
    \end{bmatrix}} Q \xrightarrow{} 0,
    \]
    where
    \[
    \partial_1=\begin{bmatrix}
        -y & -yy_1 & -y & -x_1 & 0 & 0 & 0 & 0\\ x & 0 & 0 & 0 & -y_1 & 0 & 0 & 0\\ 0 & xz & 0 & 0 & z & -x & -xx_1 & 0\\ 0 & 0 & z & 0 & 0 & y_1 & 0 & -x_1\\ 0 & 0 & 0 & z & 0 & 0 & yy_1 & y
    \end{bmatrix}\,\text{ and }\,
    \partial_2=\begin{bmatrix}
        y_1 & 0 & 0 & 0 & 0\\ -1 & 1 & x_1 & 0 & 0\\ 0 & -y_1 & 0 & x_1 & 0\\ 0 & 0 & -yy_1 & -y & 0\\ x & 0 & 0 & 0 & 0\\ 0 & z & 0 & 0 & x_1\\ 0 & 0 & z & 0 & -1\\ 0 & 0 & 0 & z & y_1
    \end{bmatrix}.
    \]

    \item The ordering $yz < xz < yy_1 < xy < xx_1$ has corresponding Lyubeznik resolution 
    \[
    0 \xrightarrow{} Q \xrightarrow{\begin{bmatrix}
        0 \\ -x_1 \\ 1 \\ -y_1 \\ z
    \end{bmatrix}} Q^5 \xrightarrow{\partial_2} Q^8 \xrightarrow{\partial_1} Q^5 \xrightarrow{\begin{bmatrix}
      yz & xz & yy_1 &  xy & xx_1 
    \end{bmatrix}} Q \xrightarrow{} 0,
    \]
    where
    \[
    \partial_1=\begin{bmatrix}
        -x & -y_1 & -x & -xx_1 & 0 & 0 & 0 & 0\\ y & 0 & 0 & 0 & -x_1 & 0 & 0 & 0\\ 0 & z & 0 & 0 & 0 & -x & -xx_1 & 0\\ 0 & 0 & z & 0 & 0 & y_1 & 0 & -x_1\\ 0 & 0 & 0 & yz & z & 0 & yy_1 & y
    \end{bmatrix}\,\text{ and }\,\partial_2=\begin{bmatrix}
        x_1 & 0 & 0 & 0 & 0\\ 0 & x & xx_1 & 0 & 0\\ 0 & -y_1 & 0 & x_1 & 0\\ -1 & 0 & -y_1 & -1 & 0\\ y & 0 & 0 & 0 & 0\\ 0 & z & 0 & 0 & x_1\\ 0 & 0 & z & 0 & -1\\ 0 & 0 & 0 & z & y_1
    \end{bmatrix}.
    \]
  
\end{enumerate}

Resolutions (2) and (3) are isomorphic, and they are clearly not isomorphic to resolution (1), the minimal free resolution of $Q/I(G)$. 
\end{ex} 

In Section \ref{Lyubezniksection}, we investigate the graphs $G$ for which $Q/I(G)$ is minimally resolved by a Lyubeznik resolution.  In general, we have the following terminology. 

\begin{defn}\label{Lyubenikideal}
    If there exists a total order $<$ on $G(I)$ such that $\mathbb{L}_<$ is the minimal free resolution of $Q/I$, then we call $I$ a \textit{Lyubeznik ideal} of $Q$.   
\end{defn}

\begin{ex}\label{Lyubeznikresnexcontd} 
    The ideal $I(G)=(xy, xz, yz, xx_1, yy_1) \subset \Bbbk[x,y,x_1,y_1,z]$ is a Lyubeznik ideal, since $\mathbb{L}_<$ is minimal for the ordering $xy<xz<yz<xx_1<yy_1$ (see resolution (1) from Example \ref{Lyubeznikresnex}).
\end{ex}

We work with an equivalent definition of Lyubeznik resolutions which appears in the context of discrete Morse theory, an area of combinatorial algebra to which we now turn our attention.  In Section \ref{Lyubezniksection}, we use tools from discrete Morse theory to show that Lyubeznik graphs (i.e., graphs with Lyubeznik edge ideals), and thus trees of diameter three, are dg.  

\begin{defn}\label{Taylorgraph}
    Let $I$ be a monomial ideal of $Q$.  Any free resolution of $Q/I$ induces a poset which keeps track of the nonzero components of the differential.  That is, the elements of this poset correspond to the basis elements of the free modules in the resolution, and two elements corresponding to basis elements in adjacent homological degrees are comparable if one appears with nonzero coefficient in the image of the other under the differential.  The \textit{Taylor poset} of $I$ is the poset induced in this way from $\mathbb{T} \otimes \Bbbk$, excluding the elements corresponding to homological degrees zero and one.
\end{defn}

We arrange the elements of the Taylor poset in columns: in the $i$th column, the elements are the subsets of $G(I)$ of size $i$, listed vertically.  Arrows are between elements as follows: for $i \geq 2$, direct element $\sigma$ of size $i$ to element $\tau$ of size $i-1$ if and only if $\sigma \supseteq \tau$, $m_\sigma=m_\tau$, and $|\tau| \geq 2$.

\begin{ex}\label{Taylorgraphex}
    Let $G$ be the graph in Figure \ref{L(1,1,1)}.  The edge ideal $I(G)$ of $G$ is $(xy, xz, yz, xx_1, yy_1)$.  Figure \ref{tg} shows the Taylor poset of $I(G)$.
    \begin{figure}[H]
        \centering
    \begin{tikzcd}
        & & {\{xy, xz, yz, xx_1\}} \arrow[rr] \arrow[rrd] \arrow[rrdd] & & {\{xy,yz,xx_1\}} \arrow[rrd] & & {\{xy,xx_1\}}\\
        & &     & & {\{xy,xz,xx_1\}} & & {\{yz,xx_1\}} \\
        & & {\{xy,yz,xx_1,yy_1\}} \arrow[rrd] & & {\{xz,yz,xx_1\}} \arrow[rru] & & {\{xz,xx_1\}} \\
        & &     & & {\{yz,xx_1,yy_1\}} & & {\{xy,xz\}}\\
        {\{xy,xz,yz,xx_1,yy_1\}} \arrow[rr] & & {\{xz,yz,xx_1,yy_1\}} \arrow[rru] \arrow[rrd] & & {\{xy,xz,yz\}} \arrow[rru] \arrow[rrd] \arrow[rr] & & {\{xz,yz\}}\\
        & &     & & {\{xz,xx_1,yy_1\}} & & {\{xy,yz\}}\\
        & & {\{xy,xz,xx_1,yy_1\}} \arrow[rru] & & {\{xz,yz,yy_1\}} \arrow[rrd] & & {\{xy,yy_1\}}\\
        & &     & & {\{xy,yz,yy_1\}} & & {\{xz,yy_1\}}\\
        & & {\{xy,xz,yz,yy_1\}} \arrow[rruu] \arrow[rru] \arrow[rr] & & {\{xy,xz,yy_1\}} \arrow[rru] & & {\{yz,yy_1\}}\\
        & &     & & {\{xy,xx_1,yy_1\}} \arrow[rr] & & {\{xx_1,yy_1\}}\\
    \end{tikzcd}
        \caption{Taylor poset of $I(G)=(xy, xz, yz, xx_1, yy_1)$}
        \label{tg}
    \end{figure}
\end{ex}

\begin{defn}\label{Morsematching}
    View the Taylor poset as a directed graph.  A collection of (directed) edges $A=\{(a_j,b_j)\}_{j \in \Lambda}$ of the Taylor poset is a \textit{Morse matching} if
\begin{itemize}
    \item[(i)] no two edges of $A$ are incident,
    \item[(ii)] the graph obtained by reversing the direction of the edges in $A$ is acyclic (as a directed graph).
\end{itemize}
We write $A_+$ for the collection of sources of edges in $A$ and $A_-$ for the collection of targets of edges in $A$, i.e., if $A=\{(a_j,b_j)\}_{j \in \Lambda}$, then $A_+=\{a_j \mid j \in \Lambda\}$ and $A_-=\{b_j \mid j \in \Lambda\}$.  
\end{defn}

We examine Morse matchings in Section \ref{Lyubezniksection} to deduce results about the existence of dg algebra structure on resolutions of $Q/I(G)$, where $I(G)$ is Lyubeznik. We introduce the following to aid our analysis.

\begin{defn}
    Given an edge $(a_j,b_j)$ in a Morse matching on a Taylor poset, if $b_j=a_j \backslash\{c\}$, then we say that $c$ \textit{drops from} $a_j$ and that $a_j$ \textit{drops} $c$.  
\end{defn}

\begin{ex}\label{Morsematchingsubcomplexex}
    Let $G$ be the graph in Figure \ref{L(1,1,1)}. The edge ideal $I(G)$ of $G$ is $(xy, xz, yz, xx_1, yy_1)$. Figure \ref{mmdiag} shows a Morse matching $A$ on the Taylor poset of $Q/I(G)$ (Figure \ref{tg}), where each arrow is labeled by the element dropped from its source and arrows not in $A$ are omitted.  
    \begin{figure}[H]
        \centering
 \begin{tikzcd}
        & & {\{xy, xz, yz, xx_1\}}  \arrow[rrdd, "xy"] & & {\{xy,yz,xx_1\}} \arrow[rrd, "xy"] & & {\{xy,xx_1\}}\\
        & &     & & {\{xy,xz,xx_1\}} & & {\{yz,xx_1\}} \\
        & & {\{xy,yz,xx_1,yy_1\}} \arrow[rrd, "xy"] & & {\{xz,yz,xx_1\}}  & & {\{xz,xx_1\}} \\
        & &     & & {\{yz,xx_1,yy_1\}} & & {\{xy,xz\}}\\
        {\{xy,xz,yz,xx_1,yy_1\}} \arrow[rr, "xy"] & & {\{xz,yz,xx_1,yy_1\}} & & {\{xy,xz,yz\}}  \arrow[rr, "xy"]  & & {\{xz,yz\}}\\
        & &     & & {\{xz,xx_1,yy_1\}} & & {\{xy,yz\}}\\
        & & {\{xy,xz,xx_1,yy_1\}} \arrow[rru, "xy"] & & {\{xz,yz,yy_1\}}  & & {\{xy,yy_1\}}\\
        & &     & & {\{xy,yz,yy_1\}} & & {\{xz,yy_1\}}\\
        & & {\{xy,xz,yz,yy_1\}} \arrow[rruu, "xy"] & & {\{xy,xz,yy_1\}} \arrow[rru, "xy"] & & {\{yz,yy_1\}}\\
        & &     & & {\{xy,xx_1,yy_1\}} \arrow[rr, "xy"] & & {\{xx_1,yy_1\}}\\
    \end{tikzcd}
 \caption{A Morse matching}
        \label{mmdiag}
\end{figure}
\end{ex}

Morse matchings allow us to ``cut down" from the Taylor resolution to complexes which are closer to minimal (see \cite[Proposition 2.2, Proposition 3.1, Lemma 7.7]{BW} and also \cite[Theorem 2.3]{BM}, the statement of which we reformulate to match our setting).    

\begin{prop}[Batzies-Welker, 2002]\label{Morsematchingsubcomplex}
    A Morse matching $A$ on the Taylor poset of a monomial ideal $I$ induces a subcomplex $\mathbb{J}$ of the Taylor resolution $\mathbb{T}$ on $G(I)$ corresponding to (some of) the nonminimal part of $\mathbb{T}$:
    \[
\mathbb{J}:=\bigoplus_{V \in A_+} 0 \to Qe_V \to Q\partial(e_V) \to 0.
    \]  
    Furthermore, $\mathbb{J}$ is exact, and thus the induced complex $\mathbb{T}/\mathbb{J}$ is a resolution of $Q/I$. 
\end{prop}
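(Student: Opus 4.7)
The plan is to verify two things: first, that each two-term summand $0 \to Qe_V \to Q\partial(e_V) \to 0$ for $V \in A_+$ is a genuine subcomplex of $\mathbb{T}$; and second, that these summands assemble into an internal direct sum inside $\mathbb{T}$, so $\mathbb{J}$ embeds as a subcomplex rather than merely mapping to one.

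For the first point, fix $V \in A_+$. By Definitions \ref{Taylorgraph} and \ref{Morsematching}, $V$ is matched with a unique target $W = V \smallsetminus \{u\}$ satisfying $m_V = m_W$, so the coefficient of $e_W$ in $\partial(e_V)$ is $\pm m_V/m_W = \pm 1 \in Q^{\times}$. Hence $Q\partial(e_V) \subseteq \mathbb{T}_{|V|-1}$ is a free rank-one submodule, the Taylor differential restricts to an isomorphism $Qe_V \to Q\partial(e_V)$, and because $\partial^2 = 0$ in $\mathbb{T}$, the induced map $Q\partial(e_V) \to \mathbb{T}_{|V|-2}$ is zero. Thus each two-term piece is a genuine subcomplex of $\mathbb{T}$.

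For the second point, I would leverage both properties of a Morse matching in Definition \ref{Morsematching}. The non-incidence condition (i) yields $A_+ \cap A_- = \emptyset$ as sets of vertices, so no basis element $e_V$ with $V \in A_+$ coincides with any leading basis element $e_{W(V')}$, which rules out collisions between the degree-$|V|$ and degree-$(|V|-1)$ halves of $\mathbb{J}$ inside $\mathbb{T}$. The acyclicity condition (ii) provides a partial order on $A_+$ under which the leading-term structure of $\{\partial(e_V) : V \in A_+\}$ becomes triangular: each $\partial(e_V)$ has $e_{W(V)}$ as a distinguished basis element with unit coefficient, and any other $e_{W(V')}$ that could appear in $\partial(e_V)$ must come from a $V'$ strictly below $V$ in the induced order (otherwise one produces a directed cycle in the reversed-edge Taylor graph). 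This triangularity, by a standard Gaussian elimination argument, yields $Q$-linear independence of $\{\partial(e_V) : V \in A_+\}$ in each homological degree and hence realizes $\bigoplus_{V \in A_+} Q\partial(e_V)$ as a free submodule of $\mathbb{T}$.

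The main obstacle is rigorously extracting the triangular structure from the acyclicity hypothesis: it is intuitively clear that orienting by the reversed matching should linearly order the matched pairs $(V, W(V))$ in a way compatible with the non-leading coefficients of $\partial(e_V)$, but making this precise requires careful identification of which $e_{W(V')}$ can appear in $\partial(e_V)$ and verification that each such appearance reflects an arrow in the reversed Taylor graph from $V$ to $V'$. Once that bookkeeping is assembled, closure under $\partial$ from the first point combines with the direct sum decomposition from the second to give the claimed subcomplex structure.
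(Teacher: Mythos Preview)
The paper does not give its own proof of this proposition: it is stated with attribution to Batzies--Welker \cite{BW} and used as a black box, so there is no argument in the paper to compare yours against.

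That said, your outline is reasonable but not complete, and you are candid about this. The first step (each summand is a rank-one free subcomplex because the matched edge contributes a unit coefficient and $\partial^2=0$) is fine. The second step is where the real content lies, and your sketch stops short of a proof. Two points deserve care. First, $A_+\cap A_-=\emptyset$ alone does not prevent collisions: $\partial(e_V)$ has many terms $e_{V\smallsetminus\{u\}}$, and some of those subsets may lie in $A_+$ even though the matched partner $W(V)$ does not; so ruling out overlap between the $Qe_{V'}$ summands (for $V'\in A_+$) and the $Q\partial(e_V)$ summands needs the full independence argument, not just disjointness of $A_+$ and $A_-$. Second, the triangularity you want does not come directly from the Taylor-graph edges, because a term $e_{W(V')}$ can appear in $\partial(e_V)$ with a \emph{non-unit} coefficient (when $m_V\neq m_{W(V')}$), and in that case there is no edge $V\to W(V')$ in the Taylor graph at all---so acyclicity says nothing about it. The standard way around this is to run the Gaussian elimination over the local ring (or work modulo the maximal ideal first), where only the unit entries survive and acyclicity genuinely gives an upper-triangular pattern on those; one then lifts back. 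This is essentially the ``algebraic discrete Morse theory'' argument of Batzies--Welker and Sk\"oldberg, which is what the citation points to.
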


Let $f_i$ be the rank of the $i$th free module in the Taylor resolution of $Q/I(G)$, and let $s_i$ be the number of $i$-element subsets of $G(I)$ in $A_+\cup A_-$.  If $\mathbb{J}$ is as in Proposition \ref{Morsematchingsubcomplex}, then the rank of the $i$th free module in $\mathbb{T}/\mathbb{J}$ is $f_i-s_i$, since we are either sending a basis element of $\mathbb{T}$ to zero (in the case of $A_+$) or performing a change of basis (in the case of $A_-$) upon quotienting by $\mathbb{J}$.

\begin{ex}\label{mmrmk}
    In the setting of Example \ref{Morsematchingsubcomplexex}, upon taking the quotient $\mathbb{T}/\mathbb{J}$, where $\mathbb{J}$ is the subcomplex induced by $A$ (see Proposition \ref{Morsematchingsubcomplex}), we obtain the minimal free resolution of $Q/I(G)$ (resolution (1) in Example \ref{Lyubeznikresnex}).  One way to see this is to consider Betti numbers.  In Example \ref{Lyubeznikresnex}, we saw that the Betti numbers $\beta^Q_i$ of $Q/I(G)$ are 1, 5, 6, and 2.  Note that $\beta^Q_i=f_i-s_i$ for each $i$, and so $\mathbb{T}/\mathbb{J}$ is minimal.  For example, when $i=2$, we have that $6=10-4$. 
    %WE HAD THIS PREVIOUSLY% Letting $f_i$ be the rank of the $i$th free module in the Taylor resolution of $Q/I(G)$ and $s_i$ be the number of $i$-element subsets of $G(I)$ in either $A_+$ or $A_-$, we see that $\beta^Q_i=f_i-s_i$ for each $i$.  For example, when $i=2$, we have that $6=10-4$. 
\end{ex}

Quotienting the Taylor resolution by a subcomplex induced by a Morse matching yields a resolution \cite{BW}, and it may even yield a minimal one (see Example \ref{mmrmk}).  A sufficiently nice Morse matching $A$ cuts down the Taylor resolution $\mathbb{T}$ in such a way as to preserve the dg algebra structure on $\mathbb{T}$.  This occurs when the subcomplex induced by $A$ is a dg ideal of $\mathbb{T}$.

\begin{defn}\label{dgidealdef}
    A subcomplex $\mathbb{J}$ of a dg algebra $\mathbb{F}$ is a \textit{dg ideal} of $\mathbb{F}$ if $\mathbb{F}\mathbb{J} \subseteq \mathbb{J}$, i.e., if $\mathbb{J}$ is closed under multiplication by $\mathbb{F}$.    
\end{defn}

When $\mathbb{J}$ is a dg ideal of a dg algebra $\mathbb{F}$, the quotient $\mathbb{F}/\mathbb{J}$ is again a dg algebra.  We now give a sufficient condition on a Morse matching $A$ for the subcomplex induced by $A$ to be a dg ideal of the Taylor resolution.  

\begin{remark}\label{timpaper}
    Developed separately, the proof of \cite[Theorem 4.2]{tim} describes essentially the same condition that we do in the following theorem.  
\end{remark}

\begin{thm}\label{supersetdg}
    Let $A$ be a Morse matching on the Taylor poset of an ideal $I$ of $Q$.  If $A_+$ is closed under taking supersets (i.e., whenever $a_j \in A_+$, we have $a_k \in A_+$ for all $a_k\supseteq a_j$), then the subcomplex $\mathbb{J}$ induced by $A$ is a dg ideal of the Taylor resolution $\mathbb{T}$ of $Q/I$.  Thus, the resulting quotient $\mathbb{T}/\mathbb{J}$ admits the structure of a differential graded algebra.    
\end{thm}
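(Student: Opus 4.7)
The plan is to verify $\mathbb{T}\cdot\mathbb{J}\subseteq\mathbb{J}$ directly on $Q$-module generators.  Since $\mathbb{J}$ is a subcomplex of $\mathbb{T}$ by construction (Proposition \ref{Morsematchingsubcomplex}), it is spanned over $Q$ by $\{e_V:V\in A_+\}\cup\{\partial(e_V):V\in A_+\}$, and the Taylor product is $Q$-bilinear, so it suffices to show that $e_W\cdot e_V\in\mathbb{J}$ and $e_W\cdot\partial(e_V)\in\mathbb{J}$ for every Taylor basis element $e_W$ (i.e., $W\subseteq G(I)$) and every $V\in A_+$.

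For the first product, I would invoke Gemeda's product formula from Example \ref{taylorex}.  If $W\cap V\neq\emptyset$, then $e_W\cdot e_V=0\in\mathbb{J}$.  Otherwise, $e_W\cdot e_V=(-1)^{\sigma(W,V)}\frac{m_W m_V}{m_{W\cup V}}\,e_{W\cup V}$; since $W\cup V\supseteq V$ and $A_+$ is closed under taking supersets by hypothesis, we have $W\cup V\in A_+$, and therefore $e_{W\cup V}$ is itself a generator of $\mathbb{J}$.  This step is where the superset-closure hypothesis enters, and it is essentially the entire mathematical content of the theorem.

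For the second product, the cleanest route is to use the Leibniz rule on $\mathbb{T}$ to convert differentials of $e_V$ into differentials of $e_W\cdot e_V$.  Writing
\[
e_W\cdot\partial(e_V) \;=\; (-1)^{|W|}\bigl[\,\partial(e_W\cdot e_V)\;-\;\partial(e_W)\cdot e_V\,\bigr],
\]
the first summand lies in $\mathbb{J}$ because $e_W\cdot e_V\in\mathbb{J}$ by the previous paragraph and $\mathbb{J}$ is closed under $\partial$.  For the second summand, expand $\partial(e_W)=\sum_{u\in W}\pm\frac{m_W}{m_{W\smallsetminus\{u\}}}\,e_{W\smallsetminus\{u\}}$; each $e_{W\smallsetminus\{u\}}\cdot e_V$ again falls under the previous paragraph, so $\partial(e_W)\cdot e_V\in\mathbb{J}$.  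Combining gives $e_W\cdot\partial(e_V)\in\mathbb{J}$, as required.

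There is no substantive obstacle to this argument.  The only point worth flagging is that the Leibniz identity eliminates any need to examine $\partial(e_V)$ term-by-term: one reduces the degree-$(|V|-1)$ generators of $\mathbb{J}$ to the degree-$|V|$ generators, where a single application of closure under supersets finishes the job.  Since $\mathbb{J}$ is then a dg ideal of the dg algebra $\mathbb{T}$, the quotient $\mathbb{T}/\mathbb{J}$ inherits a dg algebra structure in the standard way, completing the theorem.
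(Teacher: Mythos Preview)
Your proposal is correct and follows essentially the same route as the paper: both arguments first use superset-closure of $A_+$ together with the Taylor product formula to get $e_W\cdot e_V\in\mathbb{J}$ for $V\in A_+$, and then rearrange the Leibniz rule to deduce $e_W\cdot\partial(e_V)\in\mathbb{J}$ from that and from $\partial(e_W)\cdot e_V\in\mathbb{J}$. Your write-up is a bit more explicit about expanding $\partial(e_W)$, but the structure and the key idea are identical.
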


\begin{proof}
    Suppose $(U,U') \in A$, and let $e_V \in \mathbb{T}$ be a basis element.  Since $A_+$ is closed under taking supersets, the multiplication on $\mathbb{T}$ (Example \ref{taylorex}) shows that each basis element of $\mathbb{T}$ indexed by a superset of $U$ is in $A_+$, and so $e_Ve_U \in \mathbb{J}$.  By the Leibniz rule,
    \[
    \partial(e_Ve_U)=\partial(e_V)e_U+(-1)^{|V|}e_V\partial(e_U).
    \]
    The same logic as above yields that $\partial(e_V)e_U \in \mathbb{J}$.  Since $e_Ve_U \in \mathbb{J}$, we have that $\partial(e_Ve_U) \in \mathbb{J}$, as well.  It follows that $e_V\partial(e_U) \in \mathbb{J}$, and so we have that $\mathbb{J}$ is a dg ideal of $\mathbb{T}$. 
\end{proof}

\begin{remark}
The condition that $A_+$ is closed under taking supersets is equivalent to saying that $A_+$ generates an upper order ideal of the boolean poset on $G(I)$.      
\end{remark}

In particular, if a Morse matching $A$ induces the minimal free resolution of $Q/I$ and $A_+$ is closed under taking supersets, then Theorem \ref{supersetdg} says that the minimal free resolution of $Q/I$ admits the structure of a dg algebra.  This is the main tool used in Section \ref{Lyubezniksection}.  

To conclude Section \ref{background}, we provide an example to show that the condition on $A_+$ from Theorem \ref{supersetdg}, while sufficient, is not necessary.  

\begin{ex}\label{notiff}
Theorem \ref{supersetdg} is not a biconditional statement.  Consider the cycle on five vertices $C_5$ (Figure \ref{C5}), which has edge ideal 
\[I(C_5)=(xy,yz,zu,uv,xv).\]
\begin{figure}[H]
    \centering\begin{tikzpicture}
        \draw[fill=black] (0,1) circle (3pt);
        \draw[fill=black] (2,1) circle (3pt);
        \draw[fill=black] (1,1.75) circle (3pt);
        \draw[fill=black] (0.5,0) circle (3pt);
        \draw[fill=black] (1.5,0) circle (3pt);

        \node[below = 2] at (0.5,0) {$u$};
        \node[below = 2] at (1.5,0) {$z$};
        \node[above = 2] at (1,1.75) {$x$};
        \node[right = 2] at (2,1) {$y$};
        \node[left = 2] at (0,1) {$v$};

        \draw[thick] (0.5,0) -- (1.5,0);
        \draw[thick] (0.5,0) -- (0,1);
        \draw[thick] (0,1) -- (1,1.75);
        \draw[thick] (1,1.75) -- (2,1);
        \draw[thick] (2,1) -- (1.5,0);
    \end{tikzpicture}\caption{Cycle on five vertices $C_5$}\label{C5}
    \end{figure}

    By  \cite[Remark 4.24]{BM}, the minimal free resolution of $Q/I(C_5)$ is induced by a Morse matching. When constructing a Morse matching $A$ (Figure \ref{MMC5}) to induce the minimal free resolution of $Q/I(C_5)$, we necessarily include the edges $$\{xy,yz,zu\} \rightarrow \{xy,zu\} \,\text{ and }\, \{yz,zu,uv\} \rightarrow \{yz,uv\}$$ in $A$. To meet the condition in Theorem \ref{supersetdg}, we would need to have $\{xy,yz,zu,uv\}$ in $A_+$, which it is not.  For the subcomplex $\mathbb{J}$ induced by $A$ to be a dg ideal, we need the weaker condition that $e_{\{xy,yz,zu,uv\}}$ is in $\mathbb{J}$. 

    Examining the differential of $e_{\{xy,yz,zu,uv,xv\}}$, we get that
    \begin{equation*}
   e_{\{xy,yz,zu,uv\}} = \partial(e_{\{xy,yz,zu,uv,xv\}}) - e_{\{xy,zu,uv,xv\}}-e_{\{xy,yz,zu,xv\}} + e_{\{yz,zu,uv,xv\}} + e_{\{xy,yz,uv,xv\}}
    \end{equation*}
    is in the subcomplex $\mathbb{J}$ induced by $A$, and so $\mathbb{J}$ is a dg ideal, even though $\{xy,yz,zu,uv\}$ is not in $A_{+}$.  

    \begin{figure}[H]
       \centering
       \begin{tikzcd}
           & & {\{xy,yz,zu,xv\}} \arrow[rr, "yz"] & & {\{xy,zu,xv\}} & & {\{xy,xv\}}\\
           & &      & & {\{xy,yz,zu\}} \arrow[rr, "yz"] & & {\{xy,zu\}}\\
            & & {\{xy,yz,uv,xv\}} \arrow[rr, "xv"] & & {\{xy,yz,uv\}} & & {\{xy,yz\}}\\
            & &      & & {\{xy,yz,xv\}} \arrow[rr, "xy"] & & {\{yz,xv\}}\\
            {\{xy,yz,zu,uv,xv\}} \arrow[rr, "xv"] & & {\{xy,yz,zu,uv\}} & & {\{zu,uv,xv\}} \arrow[rr, "uv"] & & {\{zu,xv\}}\\
            & &      & & {\{xy,uv,xv\}} \arrow[rr, "xv"] & & {\{xy,uv\}}\\
            & & {\{xy,zu,uv,xv\}} \arrow[rr, "xv"] & & {\{xy,zu,uv\}} & & {\{zu,uv\}}\\
          & &      & & {\{yz,zu,uv\}} \arrow[rr, "zu"] & & {\{yz,uv\}}\\
          & & {\{yz,zu,uv,xv\}} \arrow[rr, "uv"] & & {\{yz,zu,xv\}}  & & {\{yz,zu\}}\\
          & &   & &   {\{yz,uv,xv\}} & & {\{uv,xv\}}\\
       \end{tikzcd}\caption{A Morse matching on the Taylor poset of $I(C_5)$}\label{MMC5}
    \end{figure}  
\end{ex}

\section{Lyubeznik Edge Ideals}\label{Lyubezniksection}

In this section, we show that Lyubeznik edge ideals are dg.  The class of Lyubeznik graphs, i.e., graphs with Lyubeznik edge ideal, contains the class of trees of diameter three, and so this section provides part of the classification of dg trees (Theorem \ref{classification}).  In \cite{chm}, graphs with Lyubeznik edge ideals are characterized as the graphs $L(a,b,c)$ with vertex set
\[
V=\{x,y\}\cup\{x_i\}_{i=1}^a\cup\{y_j\}_{j=1}^b\cup\{z_k\}_{k=1}^c
\]
and edge set
\[
E=\{\{x,y\}\} \cup\{\{x,x_i\}\}_{i=1}^a \cup \{\{y,y_j\}\}_{j=1}^b\cup\{\{x,z_k\},\{y,z_k\}\}_{k=1}^c.  
\]
Note that any combination of $a$, $b$, and $c$ may be taken to be zero.  

\begin{ex}\label{L(a,b,c)}
    Figures \ref{L(0,1,2)} and \ref{L(1,1,1)} show the Lyubeznik graphs $L(0,2,1)$ and $L(1,1,1)$, respectively.  
\end{ex}

Equivalent to Definition \ref{Lyubeznikresn}, the Lyubeznik resolutions of a monomial ideal $I=(u_1,\ldots,u_t)$ may be described as follows (see \cite[Theorem 3.2]{BW} and also \cite[Proposition 2.1]{chm}, the statement of which we reformulate to match our setting). 

\begin{thm}[Batzies-Welker, 2002]\label{LyubeznikMorsematching}   
Let $<$ be a total order on $G(I)=\{u_1,\ldots,u_t\}$.  Without loss of generality, say $u_1<\cdots <u_t$. For $\sigma \subseteq G(I)$, set
    \[
    M_<(\sigma) = \min \{  u_q \in G(I) \colon u_q \textup{ divides } \lcm(\{u_{q+1},\dots, u_t\}\cap \sigma) \}.
    \]
    Then the set
    \[
    A(<):=\{  (\sigma \cup M_<(\sigma),\sigma \setminus M_<(\sigma)) \colon \sigma\subseteq G(I) \textup{ such that } M_<(\sigma) \neq \infty \}.
    \]
    is a Morse matching inducing the Lyubeznik resolution $\mathbb{L}_<$ of $Q/I$.  
\end{thm}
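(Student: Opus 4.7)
The theorem combines two claims: $A(<)$ is a Morse matching on the Taylor graph of $I$, and the critical cells it leaves behind generate $\mathbb{L}_<$. My plan is to verify the matching and edge-legitimacy by direct combinatorial arguments about $M_<$, defer the acyclicity condition to the Batzies--Welker framework, and finally identify the critical cells with the Lyubeznik basis.

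First, I would check that every pair $(\sigma \cup M_<(\sigma), \sigma \setminus M_<(\sigma))$ is a legitimate Taylor-graph edge in the sense of Definition \ref{Taylorgraph}. Writing $M = M_<(\sigma) = u_m$, the defining condition says $M \mid \lcm(\{u_{m+1},\ldots,u_t\} \cap \sigma)$, so toggling $M$ does not change the lcm of the subset. A short case analysis, using that no generator of $G(I)$ divides another, shows that $M_<(\sigma) \neq \infty$ forces $|\sigma \setminus M| \geq 2$, so the target is a genuine Taylor vertex. To see $A(<)$ is a matching, I would show that the map $\pi(V) := V \oplus M_<(V)$ is a well-defined involution on the set of $V$ with $M_<(V) \neq \infty$; the matching edges are then the two-element orbits, oriented from the larger subset to the smaller. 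The key identity is that for every $q < m$,
\[
\lcm\bigl(\{u_{q+1},\ldots,u_t\} \cap V\bigr) = \lcm\bigl(\{u_{q+1},\ldots,u_t\} \cap \pi(V)\bigr),
\]
because the two sides differ at most in $M$, which is absorbed into $\lcm(\{u_{m+1},\ldots,u_t\} \cap V)$. This forces $M_<(\pi(V)) = M_<(V)$ and $\pi^2 = \mathrm{id}$, so no vertex is incident to two edges of $A(<)$.

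Acyclicity is the main obstacle. I would argue as in \cite{BW}: every edge of the Taylor graph (with directions reversed on $A(<)$) preserves the lcm, so any directed cycle lies within a single lcm class; matched down-edges from $V$ remove exactly $M_<(V)$, while non-matched down-edges remove a different element. Tracking the smallest index $q$ whose membership toggles around a putative cycle and invoking both the minimality clause in the definition of $M_<$ and the invariance $M_<(\pi(V)) = M_<(V)$ established above derives a contradiction at that smallest toggled position.

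Once acyclicity is in hand, the critical cells of $A(<)$ are exactly those $\sigma$ with $M_<(\sigma) = \infty$, equivalently, those $\sigma$ for which no generator $u_q$ divides $\lcm(\{u_{q+1},\ldots,u_t\} \cap \sigma)$. For $\sigma = \{u_{i_1} < \cdots < u_{i_s}\}$, partitioning the quantifier over $u_q$ by the consecutive elements of $\sigma$ shows that this coincides with the condition ``$u_q \nmid \lcm(u_{i_t},\ldots,u_{i_s})$ for all $t$ and all $u_q < u_{i_t}$'' from Definition \ref{Lyubeznikresn}, after observing that the remaining edge cases hold automatically by the minimality of $G(I)$. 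The standard discrete Morse output of $A(<)$ --- one basis element in each dimension $s$ per critical $s$-subset, with the induced differential --- is therefore $\mathbb{L}_<$, completing the proof.
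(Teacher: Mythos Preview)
The paper does not give its own proof of this statement: it is quoted verbatim as \cite[Theorem 3.2]{BW} and used as a black box. There is therefore no ``paper's proof'' to compare your proposal against; the authors simply cite Batzies--Welker and move on.

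Your outline is a reasonable reconstruction of the standard argument, and the involution step (showing $M_<(\sigma \cup M_<(\sigma)) = M_<(\sigma \setminus M_<(\sigma)) = M_<(\sigma)$ via the lcm identity) is correct and is the right way to establish the matching condition. Two places deserve more care if you want a self-contained proof rather than a sketch. First, your acyclicity paragraph is still essentially a pointer to \cite{BW}; the phrase ``tracking the smallest index whose membership toggles'' gestures at the right invariant but does not actually execute the contradiction, and this is exactly the step that requires work. Second, and more substantively, identifying the critical cells with the Lyubeznik basis elements is not by itself enough: the Lyubeznik resolution is defined in the paper (Definition~\ref{Lyubeznikresn}) as a \emph{subcomplex} of the Taylor resolution, whereas the Morse complex a priori carries a modified differential built from gradient paths. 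You need to argue that for this particular matching the Morse differential agrees with the restricted Taylor differential, i.e., that no nontrivial gradient paths contribute. You do not address this, and it is the point where ``critical cells match'' becomes ``the resolutions are isomorphic.''
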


\begin{ex}\label{LuybeznikMorsematchingex}
    Let $G=L(1,1,1)$ (Figure \ref{L(1,1,1)}).  Figure \ref{mmdiag} shows $A(<)$ for the ordering
    \[
    xy < xz < yz < xx_1 < yy_1
    \]
    on the minimal generators of $I(G)$.  The Morse matching $A(<)$ induces the minimal free resolution of $Q/I(G)$ (Example \ref{Lyubeznikresnex}, resolution (1)).  
\end{ex}

As we have noted, different total orders need not induce the same Lyubeznik resolution of $Q/I$.  For the graphs $L(a,b,c)$, the orders $<$ such that $\mathbb{L}_<$ is the minimal $Q$-free resolution of $Q/I(L(a,b,c))$ are found in the proof of \cite[Proposition 3.6]{chm}.

\begin{thm}[Chau-H\'a-Maithani, 2024]\label{totalorder}
    For total orders $<$ on $G(I(L(a,b,c)))$ in which $xy$ is taken least, the induced Lyubeznik resolution $\mathbb{L}_<$ is the minimal $Q$-free resolution of $Q/I(L(a,b,c))$.  
\end{thm}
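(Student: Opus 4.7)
The plan is to invoke Theorem~\ref{LyubeznikMorsematching}, which identifies the basis of $\mathbb{L}_<$ with the critical cells of the Morse matching $A(<)$, i.e.~the subsets $\sigma \subseteq G(I_{L(a,b,c)})$ with $M_<(\sigma)=\infty$. Because $\mathbb{L}_<$ is a subcomplex of the Taylor resolution (Definition~\ref{Lyubeznikresn}), its differential is the restriction of the Taylor differential, so the entry in position $(e_\sigma,e_{\sigma\setminus\{u\}})$ is $\pm m_\sigma / m_{\sigma\setminus\{u\}}$. Consequently, $\mathbb{L}_<$ is minimal if and only if for every critical $\sigma$ and every $u\in\sigma$ one has $u\nmid \lcm(\sigma\setminus\{u\})$.

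I would prove this by contradiction: suppose $\sigma$ is critical and there is some $u\in\sigma$ with $u\mid\lcm(\sigma\setminus\{u\})$. I would split into cases according to the type of the generator $u$ in $I_{L(a,b,c)}$. If $u=xy$, then since $xy$ is least the set $\{v\in\sigma:v>xy\}$ equals $\sigma\setminus\{xy\}$, and the divisibility $xy\mid\lcm(\sigma\setminus\{xy\})$ forces $M_<(\sigma)=xy\neq\infty$, contradicting criticality. If $u=xx_i$ or $u=yy_j$, then the pendant variable $x_i$ (resp.~$y_j$) appears in no generator other than $u$ itself, so $u$ cannot divide $\lcm(\sigma\setminus\{u\})$, a contradiction.

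The most substantive case is $u=xz_k$, with $u=yz_k$ being symmetric. Here the variable $z_k$ appears only in $xz_k$ and $yz_k$, so the hypothesis $z_k\mid\lcm(\sigma\setminus\{u\})$ forces $yz_k\in\sigma$. Hence $\{xz_k,yz_k\}\subseteq\sigma$, and $\lcm(xz_k,yz_k)=xyz_k$ is divisible by $xy$. Since $xy$ is the minimum of $<$, both $xz_k$ and $yz_k$ are strictly larger than $xy$, so $xy\mid\lcm(\{v\in\sigma:v>xy\})$. This gives $M_<(\sigma)=xy\neq\infty$, again contradicting criticality.

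The main obstacle I anticipate is the case $u=xz_k$ (or $yz_k$): the contradiction requires the slightly nonobvious observation that the simultaneous presence of $xz_k$ and $yz_k$ in $\sigma$ produces a divisibility by the global minimum $xy$ and thereby triggers the matching by $xy$. The remaining cases reduce quickly to the leaf structure of $L(a,b,c)$ and to the minimality of $xy$, and the argument is uniform in the ordering of the other generators, which is why the conclusion holds for \emph{every} total order in which $xy$ is taken least.
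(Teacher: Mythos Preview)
Your argument is correct, and it takes a genuinely different route from the paper's proof.

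The paper argues \emph{comparatively}: it fixes an arbitrary order $\prec$ in which $xy$ is not least, and builds a degree-preserving injection $\phi$ from the sets that drop an element under $A(\prec)$ to the sets that drop $xy$ under $A(<)$. This shows that among all Lyubeznik resolutions of $Q/I_{L(a,b,c)}$, those with $xy$ least have the smallest ranks; minimality then follows because one already knows (from \cite{chm}) that $I_{L(a,b,c)}$ is a Lyubeznik ideal, i.e., some order achieves the Betti numbers. Your argument is \emph{direct}: you characterise the critical cells of $A(<)$ and check case-by-case that no Taylor coefficient $m_\sigma/m_{\sigma\setminus\{u\}}$ is a unit. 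This is more self-contained, since it does not lean on the prior result that $L(a,b,c)$ is Lyubeznik, and it immediately yields the explicit description of the Lyubeznik basis (every $\sigma$ not containing both some $x\beta$ and some $y\gamma$) that the paper only extracts afterwards in Corollary~\ref{LyubeznikBetti} and the proof of Theorem~\ref{Lyubeznikdg}. The paper's approach, on the other hand, buys a bit more: the injection $\phi$ shows not just that orders with $xy$ least are minimal, but that they are optimal among \emph{all} orders, which is the ``expands the orders found in \cite{chm}'' framing.

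One small point you leave implicit: for your biconditional ``$\mathbb{L}_<$ is minimal iff for every critical $\sigma$ and every $u\in\sigma$ \dots'' to be the right condition, you need that whenever $\sigma$ is critical so is $\sigma\setminus\{u\}$. This is true (the Lyubeznik condition in Definition~\ref{Lyubeznikresn} is inherited by subsets, since removing elements can only shrink the relevant $\lcm$'s), and is exactly what makes $\mathbb{L}_<$ a subcomplex rather than merely a subquotient; it would be worth stating explicitly.
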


We track the drops in a Morse matching for a Lyubeznik resolution to reveal the Betti numbers of $Q/I(L(a,b,c))$.

\begin{corollary}\label{LyubeznikBetti}
    The $i$th Betti number of $Q/I(L(a,b,c))$ is
    \[
    \beta^Q_i=\begin{cases}
        1,&i=0,\\\\
        a+b+2c+1,&i=1,\\\\
        \displaystyle{\binom{a+c+1}{i}}+{\binom{b+c+1}{i}},&i \geq 2.
    \end{cases}
    \]
\end{corollary}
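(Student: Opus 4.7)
The plan is to fix any total order $<$ on $G(I_{L(a,b,c)})$ in which $xy$ is taken least, so that by Theorem~\ref{totalorder} the Lyubeznik resolution $\mathbb{L}_<$ is the minimal $Q$-free resolution of $Q/I_{L(a,b,c)}$. Then by Theorem~\ref{LyubeznikMorsematching}, $\beta_i^Q$ equals the number of \emph{critical} size-$i$ subsets of $G(I_{L(a,b,c)})$, that is, those $\sigma$ with $M_<(\sigma)=\infty$. The cases $i=0$ and $i=1$ are immediate: the empty set and each of the $1+a+b+2c$ singletons are critical, accounting for $\beta_0=1$ and $\beta_1=a+b+2c+1$.

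For $i\geq 2$, I would introduce the two stars
\[
S_x := \{xy,xx_1,\ldots,xx_a,xz_1,\ldots,xz_c\} \text{ and } S_y := \{xy,yy_1,\ldots,yy_b,yz_1,\ldots,yz_c\}
\]
at the central vertices, noting $|S_x|=a+c+1$, $|S_y|=b+c+1$, $S_x\cap S_y=\{xy\}$, and $S_x\cup S_y=G(I_{L(a,b,c)})$. The key claim is that a size-$i$ subset $\sigma$ is critical if and only if $\sigma\subseteq S_x$ or $\sigma\subseteq S_y$; since $S_x\cap S_y$ has no subsets of size at least two, inclusion-exclusion then yields $\beta_i^Q=\binom{a+c+1}{i}+\binom{b+c+1}{i}$, as desired.

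To prove the claim, the forward direction is quick: if $\sigma$ contains both an element $u\in S_x\smallsetminus S_y$ (divisible by $x$ but not $y$) and an element $v\in S_y\smallsetminus S_x$ (divisible by $y$ but not $x$), then the $\lcm$ of the $\sigma$-elements strictly greater than $xy$ (which includes $u$ and $v$) is divisible by $xy$, so $M_<(\sigma)\leq xy$ and $\sigma$ is not critical. For the reverse direction, assume $\sigma\subseteq S_x$ (the $S_y$ case is symmetric) and verify $u_q\nmid \lcm(\{u_r:r>q\}\cap\sigma)$ for every $u_q\in G(I_{L(a,b,c)})$ by cases: if $u_q\in S_y$ (including $u_q=xy$), the relevant $\lcm$ has no $y$-factor, since elements of $\sigma\smallsetminus\{xy\}$ lie in $S_x\smallsetminus\{xy\}$; if $u_q=xx_i$ or $u_q=xz_k$, divisibility would require $x_i$ or $z_k$ respectively in the $\lcm$, but the unique generator in $S_x$ containing that variable is $u_q$ itself, which cannot be strictly greater than itself. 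The main step requiring care is this case check, but it reduces cleanly to the structural observation that each leaf variable $x_i$, $y_j$, $z_k$ appears in at most one generator of $I_{L(a,b,c)}$ lying in a given star, so no serious obstacle is anticipated.
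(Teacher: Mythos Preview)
Your proof is correct. Both you and the paper work through the Morse matching $A(<)$ from Theorems~\ref{totalorder} and~\ref{LyubeznikMorsematching}, but you take a more direct counting route. The paper counts the \emph{non}-critical cells: it observes (as in the proof of Theorem~\ref{Lyubeznikdg}) that only $xy$ ever drops, writes $K_i=\binom{a+b+2c}{i-1}-\binom{a+c}{i-1}-\binom{b+c}{i-1}$ for the number of size-$i$ subsets in $A_+$, and then computes $\beta_i^Q=\binom{a+b+2c+1}{i}-K_i-K_{i+1}$, finishing with Pascal's identity. You instead characterize the critical cells directly as the subsets of the two stars $S_x$ and $S_y$, so the formula $\binom{a+c+1}{i}+\binom{b+c+1}{i}$ drops out immediately from $|S_x\cap S_y|=1$ without any binomial manipulation. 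Your reverse-direction case check is a bit more work than strictly necessary---once one knows only $xy$ can drop, it suffices to check $u_q=xy$---but it is self-contained and sound. The upshot is the same; your argument trades the Pascal computation for a structural description of the surviving basis.
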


\begin{proof}
    \iffalse Set $n=a+b+2c+1$.\fi  We count the number of times an element drops from a source in the column of the Taylor poset corresponding to homological degree $i$, $i \geq 2$, in the Morse matching $A(<)$ from Theorem \ref{LyubeznikMorsematching}.

    Note that monomials corresponding to leaves cannot drop, since each is divisible by a unique variable.  Without loss of generality, if $xz_k$ drops from $\sigma$, then $x\beta, yz_k \in \sigma$, where $\beta$ is a variable such that $x\beta \in G(I(L(a,b,c)))$, whence $xy$ divides $m_\sigma$.  That is, the only element ever dropped in $A(<)$ is $xy$.  Moreover, whenever $xy$ has the potential to drop, it does.  Define
    \[
    K_i:={\binom{a+b+2c}{i-1}}-{\binom{a+c}{i-1}}-{\binom{b+c}{i-1}}.
    \]
    This value tells us how many $i$-element subsets of $G(I(L(a,b,c)))$ drop $xy$: there are ${\binom{a+b+2c}{i-1}}$ subsets containing $xy$, and in order for one of them, say $S$, to drop $xy$, we cannot take the other $i-1$ elements in $S$ to be either all $x\beta_1$ (respectively, $y\beta_2$), since then $y$ (respectively, $x$) would not divide the lcm of the elements in $S\backslash\{xy\}$.  Thus, we subtract off these situations.  

    The $i$th Betti number of $Q/I(L(a,b,c))$ is thus equal to the number of $i$-element subsets of $G(I(L(a,b,c)))$ minus the number of $i$-element subsets of $G(I(L(a,b,c)))$ that drop $xy$ minus the number of $(i+1)$-element subsets of $G(I(L(a,b,c)))$ that drop $xy$, since both $A(<)_+$ and $A(<)_-$ contribute to the ranks of the free modules in the induced resolution (see the paragraph before Example \ref{mmrmk}).  That is,
    \[
    \beta^Q_i={\binom{a+b+2c+1}{i}}-(K_i+K_{i+1}).
    \]
    By Pascal's Identity,
    \[
    \beta^Q_i={\binom{a+c+1}{i}}+{\binom{b+c+1}{i}}
    \]
    for $i \geq 2$. The cases $i=0$ and $i=1$ are clear.    
\end{proof}

As an immediate consequence, we get the projective dimension of $Q/I(L(a,b,c))$.  

\begin{corollary}\label{Lyubeznikpd}
    The projective dimension of $Q/I(L(a,b,c))$ over $Q$ is the maximum of $a+c+1$ and $b+c+1$.  
\end{corollary}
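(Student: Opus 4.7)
The plan is to derive the projective dimension directly from the formula for the Betti numbers in Corollary \ref{LyubeznikBetti}, using the standard fact that
\[
\pd_Q(Q/I_{L(a,b,c)}) = \max\{\, i : \beta^Q_i(Q/I_{L(a,b,c)}) \neq 0\,\}.
\]
So the only thing to do is to determine the largest index $i$ at which the expression $\binom{a+c+1}{i}+\binom{b+c+1}{i}$ is nonzero.

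First I would set $M := \max(a+c+1,\,b+c+1)$ and handle the degenerate case $a=b=c=0$ separately: here $I_{L(0,0,0)}=(xy)$, so $Q/I$ is resolved by $0\to Q \to Q \to Q/(xy) \to 0$, giving projective dimension $1 = M$. Outside this case we have $M\geq 2$, so the formula $\beta^Q_i = \binom{a+c+1}{i}+\binom{b+c+1}{i}$ from Corollary \ref{LyubeznikBetti} applies at $i=M$ and beyond.

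Next I would verify non-vanishing at $i=M$ and vanishing above. Without loss of generality assume $a\geq b$, so that $M = a+c+1$. Then $\binom{a+c+1}{M} = \binom{M}{M} = 1$, and $\binom{b+c+1}{M} \geq 0$, so $\beta^Q_M \geq 1$. For any $i > M$, both $a+c+1 < i$ and $b+c+1 \leq a+c+1 < i$, so both binomial coefficients $\binom{a+c+1}{i}$ and $\binom{b+c+1}{i}$ vanish, giving $\beta^Q_i=0$. Combining these shows that $\pd_Q(Q/I_{L(a,b,c)}) = M = \max(a+c+1,b+c+1)$, as claimed.

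No real obstacle arises here; this is a clean bookkeeping consequence of the closed form in Corollary \ref{LyubeznikBetti}. The only mild subtlety is remembering that the formula for $\beta^Q_i$ is stated for $i\geq 2$, which is why the single tiny case $a=b=c=0$ needs to be handled by hand.
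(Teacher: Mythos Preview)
Your proposal is correct and matches the paper's approach: the paper states this corollary as ``an immediate consequence'' of Corollary~\ref{LyubeznikBetti} without writing out any proof, and your argument is exactly the intended bookkeeping from that Betti number formula. One minor remark: you do not actually need to treat $a=b=c=0$ separately, since Corollary~\ref{LyubeznikBetti} also supplies $\beta^Q_1=a+b+2c+1=1$ and $\beta^Q_i=\binom{1}{i}+\binom{1}{i}=0$ for $i\geq 2$, giving $\pd=1=M$ directly.
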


We note that the minimal free resolution of $Q/I(L(a,b,c))$ is linear, and so Corollary \ref{LyubeznikBetti} provides not only the Betti numbers of $Q/I(L(a,b,c))$, but the graded Betti numbers.  

\begin{remark}\label{gddbettislyubeznik}
    Lyubeznik graphs are examples of \textit{split} graphs, i.e., graphs whose vertices may be partitioned into a clique and an independent set (take either $\{x,y\}$ if there are no cycles or a triangle if one exists as the clique; the rest of the vertices form an independent set).  Split graphs are cochordal (see, for example \cite[Proposition 3.10]{split}), and cochordal graphs are precisely the graphs with edge ideal having a linear resolution \cite{Froberg}.  
\end{remark}

By analyzing the Morse matching taken to produce the minimal free resolution $\mathbb{L}_<$ of $Q/I(L(a,b,c))$, we deduce that $\mathbb{L}_<$ admits the structure of a dg algebra.  

\begin{thm}\label{Lyubeznikdg}
    Lyubeznik edge ideals are dg.
\end{thm}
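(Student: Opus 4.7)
The plan is to invoke Theorem \ref{supersetdg} on the Morse matching $A(<)$ that induces the minimal resolution $\mathbb{L}_<$ of $Q/I_{L(a,b,c)}$, where $<$ is any total order on $G(I_{L(a,b,c)})$ with $xy$ smallest. Theorem \ref{totalorder} guarantees such an $<$ produces the minimal resolution as the corresponding Lyubeznik resolution, so the problem reduces to verifying that $A(<)_+$ is closed under taking supersets.

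I would first pin down the source set. Because $xy$ is the smallest generator in $<$, the formula for $M_<(\sigma)$ from Theorem \ref{LyubeznikMorsematching} simplifies: whenever $xy \mid \lcm(\sigma \smallsetminus \{xy\})$, we have $M_<(\sigma) = xy$. I claim this is the only way to get $M_<(\sigma) \neq \infty$, so that
\[
A(<)_+ = \{\tau \subseteq G(I_{L(a,b,c)}) : xy \in \tau \text{ and } xy \mid \lcm(\tau \smallsetminus \{xy\})\}.
\]
Granted this, closure under supersets is immediate: if $\tau \in A(<)_+$ and $\tau \subseteq \tau'$, then $xy \in \tau'$ and $\lcm(\tau' \smallsetminus \{xy\})$ is a multiple of $\lcm(\tau \smallsetminus \{xy\})$ and hence of $xy$.

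To prove the claim about $A(<)_+$, I would argue by contradiction: suppose $xy \nmid \lcm(\sigma \smallsetminus \{xy\})$, yet $M_<(\sigma) \neq \infty$. By the symmetry between $x$ and $y$ in the structure of $L(a,b,c)$, assume without loss of generality that $x$ is missing from $\lcm(\sigma \smallsetminus \{xy\})$, so $\sigma \smallsetminus \{xy\} \subseteq \{yy_j, yz_k\}_{j,k}$. Any $u_q \neq xy$ with $u_q \mid \lcm(\{u_{q+1},\ldots,u_t\} \cap \sigma)$ must use only variables present in $\lcm(\sigma \smallsetminus \{xy\})$, in particular no $x$, forcing $u_q \in \{yy_j, yz_k\}$. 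Using that $y_j$ appears only in $yy_j$ and that $z_k$ appears only in $\{xz_k, yz_k\}$ with $xz_k \notin \sigma$, one checks that no such $u_q$ can actually divide the relevant LCM, so $M_<(\sigma) = \infty$, a contradiction.

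With the characterization of $A(<)_+$ and its superset-closure established, Theorem \ref{supersetdg} shows that the subcomplex of the Taylor resolution induced by $A(<)$ is a dg ideal of $\mathbb{T}$, and the quotient---which equals $\mathbb{L}_<$---inherits a dg algebra structure from the Taylor dg algebra. The main obstacle is the case analysis ruling out alternative values of $M_<(\sigma)$; everything else, including the closure argument and the application of Theorem \ref{supersetdg}, is essentially formal once the characterization of $A(<)_+$ is in place.
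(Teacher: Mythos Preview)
Your proposal is correct and follows essentially the same approach as the paper: fix an order $<$ with $xy$ least, show that $xy$ is the only element ever dropped in $A(<)$ (equivalently, characterize $A(<)_+$ as you do), deduce closure under supersets, and then apply Theorem \ref{totalorder} together with Theorem \ref{supersetdg}. The paper's case analysis is phrased slightly differently (leaves cannot drop because of their unique variable, and if $xz_k$ were to drop then $xy \mid m_\sigma$ so $xy$ would drop instead), but the content is the same as your contradiction argument.
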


\begin{proof}
The only element dropped in the Morse matching $A(<)$, where $<$ is an order in which $xy$ is taken least among all minimal generators of $G(I(L(a,b,c)))$, is $xy$. \iffalse Indeed, leaves cannot drop, since each leaf is divisible by a unique variable.  Without loss of generality, if $xz_k$ drops from $\sigma$, then $x\beta, yz_k \in \sigma$, where $\beta$ is a variable such that $x\beta \in G(I(L(a,b,c)))$, whence $xy$ divides $m_\sigma$.\fi Thus, if $xy$ drops from $\sigma$, then $xy$ drops from $\tau \supseteq \sigma$.  That is, $A(<)_+$ is closed under taking supersets.  The matching $A(<)$ induces the minimal free resolution of $Q/I(L(a,b,c))$ by Theorem \ref{totalorder}, and by Theorem \ref{supersetdg}, $I(L(a,b,c))$ is dg.      
\end{proof}

The class of Lyubeznik graphs contains the class of diameter-three trees, and so we get the following corollary.  

\begin{corollary}\label{d3dg}
    Let $\Gamma$ be a tree of diameter three.  The minimal free resolution of $Q/I(\Gamma)$ admits the structure of a differential graded algebra.  
\end{corollary}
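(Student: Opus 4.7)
The plan is to reduce the statement to Theorem \ref{Lyubeznikdg} by showing that every tree of diameter three is a Lyubeznik graph of the form $L(a,b,0)$ with $a,b\geq 1$. Since Lyubeznik edge ideals are already known to be dg, the corollary then follows immediately.

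First I would carefully describe the structure of a diameter-three tree $\Gamma$. Since the diameter equals three, $\Gamma$ contains a path $x_1 - x - y - y_1$ on four vertices, and no path of length four. I would argue that every other vertex of $\Gamma$ must be a leaf attached to $x$ or to $y$. Indeed, if some additional vertex $w$ were attached to $x_1$ (or symmetrically $y_1$), the path $w - x_1 - x - y - y_1$ would have length four, contradicting diameter three; and if some non-leaf vertex $w$ were attached to $x$, then $w$ would be adjacent to a further vertex $w'$, producing the length-four path $w' - w - x - y - y_1$, again a contradiction. Symmetric reasoning handles vertices attached to $y$.

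Consequently, the vertex set of $\Gamma$ decomposes as $\{x,y\}\cup\{x_i\}_{i=1}^a\cup\{y_j\}_{j=1}^b$ with $a,b\geq 1$, and the edges are precisely $\{\{x,y\}\}\cup\{\{x,x_i\}\}_{i=1}^a\cup\{\{y,y_j\}\}_{j=1}^b$. Comparing with the explicit description of Lyubeznik graphs recalled at the beginning of Section \ref{Lyubezniksection}, this is exactly the graph $L(a,b,0)$. Hence $I_\Gamma = I_{L(a,b,0)}$ is a Lyubeznik edge ideal, and Theorem \ref{Lyubeznikdg} gives that the minimal $Q$-free resolution of $Q/I_\Gamma$ admits the structure of a dg algebra.

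I do not anticipate any real obstacle here: the only content is the graph-theoretic observation that a diameter-three tree is a ``double star,'' and once that is established everything is delivered by Theorem \ref{Lyubeznikdg}. The only point requiring a modicum of care is to ensure the case analysis above is exhaustive (in particular, to cover both endpoints of the diameter-realizing path and any additional vertex adjacent to $x$ or $y$), but this is routine.
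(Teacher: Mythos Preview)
Your proposal is correct and follows essentially the same approach as the paper: identify a diameter-three tree as $L(a,b,0)$ with $a,b\geq 1$ and then invoke Theorem~\ref{Lyubeznikdg}. In fact you give more detail than the paper, which simply asserts the graph-theoretic identification in one line.
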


\begin{proof}
    Diameter-three trees are precisely the Lyubeznik graphs $L(a,b,0)$ with $a,b \geq 1$.
\end{proof}

\begin{remark}\label{powersrmk}
    The Betti numbers and projective dimension of powers of edge ideals of trees of diameter three are computed in \cite{rachel}, where they are called ``whisker graphs."  The proof uses different methods, as the story there is about linear quotients.  In \cite{faridietal}, edge ideals of trees of diameter three are shown to be minimally resolved by their Scarf complexes.  
\end{remark}

\section{Diameter-four Trees}\label{d4section}

Edge ideals of trees of diameter four are not Lyubeznik, and so the results of Section \ref{Lyubezniksection} are not applicable to them.  In this section, we construct the minimal free resolution of $Q/I(\Gamma)$, where $\Gamma$ is a tree of diameter four, from Taylor resolutions of the edge ideals of certain subgraphs of $\Gamma$.  We explicitly describe a product on the resolution that respects the multiplications coming from the constituent Taylor resolutions. For brevity, if $\FF$ is a resolution of $Q/I$, we write $\FF \simeq Q/I$.

Let $\Gamma$ be a tree of diameter $d = 4$. By \cite{d4}, we know that there exist $n, a_1, a_2, \ldots, a_n \in \ZZ$ such that the vertices of $\Gamma$ can be labeled \[z, x_1, \ldots, x_n, y_{1,1}, \ldots, y_{1,a_1}, y_{2,1}, \ldots, y_{2,a_2}, \ldots, y_{n,1}, \ldots, y_{n,a_n} \] so that $\Gamma$ has edge ideal 
\begin{align*}
I(\Gamma) = \underset{=:I}{\underbrace{(zx_1, \ldots, zx_n)}} + \underset{=:J}{\underbrace{(x_1 y_{1,1}, \ldots x_1y_{1,a_1}, x_2y_{2,1}, \ldots, x_ny_{n,a_n})}}. \label{Iform} \tag{$1$}
\end{align*}
In this section, we construct a free resolution $Q/I(\Gamma)$ from free resolutions of $Q/I$ and $Q/J$. Furthermore, we show that using minimal free resolutions in our construction produces another minimal resolution. Moving forward, let $\FF$ and $\GG$ be Taylor resolutions over $Q$ such that $\FF \simeq Q/I$ and $\GG \simeq Q/J$.

\begin{lem}
    Both $\FF$ and $\GG$ are minimal resolutions.
\end{lem}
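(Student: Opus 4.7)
The plan is to apply the minimality criterion used in the proof of Observation \ref{d012obs}: the Taylor resolution of $Q/K$ on $G(K)$ is minimal precisely when no $u \in G(K)$ divides $\lcm(G(K) \smallsetminus \{u\})$, i.e., when every $u \in G(K)$ contains a variable that appears in no other minimal generator of $K$. So the task reduces to verifying this ``private variable'' property separately for the generating sets of $I$ and $J$.

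For $\FF$, recall that $I = (zx_1, \ldots, zx_n)$, and the $x_1,\ldots,x_n$ are pairwise distinct variables, being labels of distinct vertices of $\Gamma$ in $(\star)$. The variable $x_i$ appears in the generator $zx_i$ but in no other generator of $I$, so the private-variable condition is met and $\FF$ has no unit entries in its differentials; thus $\FF$ is minimal.

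For $\GG$, recall that $J$ is generated by the monomials $x_i y_{i,k}$. Because $\Gamma$ is a tree of diameter four presented as in $(\star)$, each $y_{i,k}$ is a leaf of $\Gamma$ adjacent only to $x_i$; in particular, the variable $y_{i,k}$ does not coincide with any $x_j$ or with any $y_{j,\ell}$ for $(j,\ell) \neq (i,k)$. Hence $y_{i,k}$ appears in $x_i y_{i,k}$ but in no other generator of $J$, so the private-variable condition again holds and $\GG$ is minimal.

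The main obstacle here is essentially bookkeeping concerning the labeling in $(\star)$; once the private variables are identified, the conclusion is immediate and no deeper homological input is required. I expect the entire proof to take only a few lines, with the only subtlety being a remark confirming that the various $y_{i,k}$ are genuinely pairwise distinct vertices of $\Gamma$ (which is built into the diameter-four labeling).
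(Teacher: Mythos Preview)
Your argument is correct and is essentially the same as the paper's: the paper cites Observation \ref{d012obs} directly (noting $I$ is the edge ideal of a diameter-two tree and $J$ the edge ideal of a forest of such trees), while you unpack the private-variable criterion from that observation's proof and verify it explicitly for $G(I)$ and $G(J)$. The only cosmetic difference is that you spell out the details rather than appealing to the observation as a black box.
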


\begin{proof}
    This follows from Observation \ref{d012obs} since $I$ is the edge ideal of a tree of diameter 2 and $J$ is the edge ideal of a forest consisting of trees of diameter at most 2.
\end{proof}

If $IJ = I \cap J$, then we would have $\FF \otimes_Q \GG \simeq Q/(I+J) = Q/ I(\Gamma)$. However, if $J \neq 0$, then there exist $i,j$ such that $x_i y_{i,j} \in J$ and thus $zx_i y_{i,j} \in (I \cap J) \backslash IJ$. Consequently, $\FF \otimes_Q \GG$ has nontrivial homology in positive homological degrees; that is, $\FF \otimes_Q \GG$ is not a resolution. This necessitates a new approach.

\begin{appr}\label{rem:SES}
    Our alternative construction leverages the exact sequence of $Q$-modules
    \[
    \xymatrix{
    0 \ar[r] & \frac{I(\Gamma)}{I} \ar[r] & \frac{Q}{I} \ar[r] & \frac{Q}{I(\Gamma)} \ar[r] & 0.
    }
    \]
    Our strategy is to construct a mapping cone such that the corresponding long exact sequence in homology reduces to the above exact sequence. To do this, we need to construct the minimal free resolution $\GG' \simeq I(\Gamma) / I$ and a chain map $\Psi: \GG' \to \FF$ such that $\Cone{\Psi} \simeq Q/I(\Gamma)$.
\end{appr}

As a first step, we construct $\GG'$. We use the following lemmas to motivate how we will manipulate the resolution $\GG$ to obtain $\GG'$.

\begin{lem}\label{lem:zColon}
Let $I$ and $J$ be as in Equation \ref{Iform}. We have $(I:_Q J) = (z)$.
\end{lem}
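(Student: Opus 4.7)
The plan is to prove the two inclusions separately, using standard facts about colon ideals of monomial ideals.

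For the inclusion $(z) \subseteq (I:_Q J)$, it suffices to check that $z \cdot g \in I$ for each minimal generator $g$ of $J$. For any such generator $g = x_i y_{i,j}$, we have $zg = y_{i,j} \cdot (zx_i)$, and $zx_i$ is a generator of $I$, so $zg \in I$.

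For the reverse inclusion, I would use the standard identities $(I :_Q J) = \bigcap_{g \in G(J)} (I :_Q g)$ and, for a monomial $m$ and monomial ideal $I$ with generators $u_1, \ldots, u_t$,
\[
(I :_Q m) \;=\; \Bigl( \tfrac{u_k}{\gcd(u_k, m)} \;:\; k = 1, \ldots, t \Bigr).
\]
Applied to $m = x_i y_{i,j}$, the $k$th generator is $zx_k/\gcd(zx_k, x_i y_{i,j})$, which equals $z$ when $k = i$ and $zx_k$ when $k \neq i$. Since each $zx_k$ is a multiple of $z$, we obtain $(I :_Q x_i y_{i,j}) = (z)$. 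Intersecting over all generators of $J$ yields $(I :_Q J) = (z)$.

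The only subtlety is ensuring the intersection in the reverse direction is taken over at least one generator, i.e., that $J \neq 0$. This is where the hypothesis on the diameter enters: if $\Gamma$ has diameter four, then $\Gamma$ contains a path of length four, which forces at least one leaf $y_{i,j}$, hence at least one generator $x_i y_{i,j}$ of $J$. I do not expect any real obstacle here; the argument is essentially a direct computation once the setup from $(\star)$ is in hand.
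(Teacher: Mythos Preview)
Your proof is correct and follows essentially the same route as the paper: both compute $(I :_Q x_iy_{i,j}) = (z)$ for each generator of $J$ and then combine. The only cosmetic difference is that you combine via the intersection identity $(I:_Q J)=\bigcap_g (I:_Q g)$, whereas the paper writes a sum; since every individual colon already equals $(z)$, both yield the same result, and your remark about needing $J\neq 0$ is a nice extra point the paper leaves implicit.
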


\begin{proof}
By the definitions of $I(\Gamma)$, $I$, and $J$, we have that if $x_i y_{i,j} \in J$, then $z x_i \in I$. We observe the following, where the first equality results from the fact that we are working with monomial ideals:
\begin{align*}
    (I:_Q x_i y_{i,j} ) &= (zx_i :_Q x_i y_{i,j}) + \sum_{\ell \neq i} (z x _{\ell} :_Q x_i y_{i,j}) \\
     &= (z) + \sum_{\ell \neq i} (z x_{\ell}) \\
     &= (z).
\end{align*}
From here, we obtain our result by noting the following equalities:
\[
(I :_Q J) = \sum_{x_i y_{i,j} \in J} (I :_Q x_i y_{i,j}) = \sum_{x_i y_{i,j} \in J} (z) = (z). \qedhere
\]
\end{proof}

We use Lemma \ref{lem:zColon} to find a $Q$-module isomorphic to $I(\Gamma)/I$. This alternate choice of $Q$-module illuminates the connection between $\GG$ and $\GG'$.

\begin{lem}\label{lem:JzJ}
    Let $I(\Gamma)$, $I$, and $J$ be as in Equation \ref{Iform}.  Then, $I(\Gamma) / I \cong J/zJ$.
\end{lem}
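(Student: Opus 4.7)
The plan is to reduce the claim to showing the equality of ideals $I \cap J = zJ$, and then to establish the two inclusions separately (one from Lemma \ref{lem:zColon}, the other by a direct computation with monomial generators).

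First, I would invoke the second isomorphism theorem (applied to the $Q$-modules $I$ and $J$ inside $Q$): since $I_\Gamma = I + J$ by Equation \ref{Iform}, we get a canonical isomorphism
\[
\frac{I_\Gamma}{I} \;=\; \frac{I+J}{I} \;\cong\; \frac{J}{I \cap J}.
\]
So the entire statement is equivalent to the ideal equality $I \cap J = zJ$.

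For the inclusion $zJ \subseteq I \cap J$, Lemma \ref{lem:zColon} gives $(I :_Q J) = (z)$, and in particular $zJ \subseteq I$; since trivially $zJ \subseteq J$, we conclude $zJ \subseteq I \cap J$. For the reverse inclusion, I would use the fact that the intersection of two monomial ideals is again a monomial ideal, generated by the lcms of pairs of generators drawn one from each. A generator of $I \cap J$ thus has the form $\lcm(zx_i,\, x_k y_{k,j})$ for some indices $i, k, j$, and in every such lcm the variable $z$ appears (with exponent $1$) and $x_k y_{k,j}$ divides the lcm; hence $z x_k y_{k,j}$ divides $\lcm(zx_i,\, x_k y_{k,j})$, placing the generator in $zJ$. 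This establishes $I \cap J \subseteq zJ$, completing the equality.

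There is no real obstacle here: the only subtlety is remembering to exploit the monomial structure for the containment $I \cap J \subseteq zJ$, which is essentially free once one writes the generators as lcms. The role of Lemma \ref{lem:zColon} is exactly to package the reverse containment so that one does not have to re-verify divisibility by $z$ on every generator of $J$ by hand. Combined with the second isomorphism theorem, these two inclusions yield $I_\Gamma/I \cong J/zJ$ as required.
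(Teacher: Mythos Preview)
Your proof is correct and follows essentially the same approach as the paper: both reduce via the second isomorphism theorem to the equality $I \cap J = zJ$, obtain $zJ \subseteq I \cap J$ from Lemma~\ref{lem:zColon}, and then verify the reverse containment using the monomial structure. The only cosmetic difference is that the paper argues $I \cap J \subseteq zJ$ by observing $I \subseteq (z)$ and $(z) \cap J = zJ$, whereas you spell out the lcm generators explicitly; these are equivalent one-line computations.
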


\begin{proof}
    For the moment, suppose $I \cap J = (I:_Q J) J$. This then yields the following:
    \[
    \frac{I(\Gamma)}{I} = \frac{I+J}{I} \cong \frac{J}{I\cap J} = \frac{J}{(I:_Q J) J} = \frac{J}{zJ}.
    \]
    Thus, for the result to hold, we must show $I \cap J = (I:_Q J) J$. By definition of $(I:_Q J)$, we must have $(I :_Q J)J \subseteq I \cap J$. To get the other containment, we note that $I \subset (z)$ and $(z) \cap J = zJ$.  It follows that \[I \cap J \subseteq (z) \cap J = zJ = (I :_Q J)J.\] 
    Thus, $(I:_Q J) J = I \cap J$, and so the desired isomorphism holds.
\end{proof}

The isomorphism $I(\Gamma) / I \cong J / zJ$ suggests another instance of a mapping cone. Specifically, since $\GG \simeq Q/J$, one has $\Sigma^{-1} \GG_{> 0} \simeq J$. Moreover, since $\GG$ is the minimal free resolution of $Q/J$, then $\Sigma^{-1} \GG_{>0}$ is the minimal free resolution of $J$.

\begin{lem}\label{lem:gPrime}
    Let $\mu^z: \Sigma^{-1} \GG_{>0} \to \Sigma^{-1} \GG_{>0}$ be the chain map given by $\mu^z(g) := zg$. The complex $\Cone{\mu^z}$ is the minimal free resolution of $J/zJ$ over $Q$; that is, $\GG' \cong \Cone{\mu^z}$.  
\end{lem}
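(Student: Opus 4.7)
The plan is to verify three claims in turn: first, that $\mu^z$ is a genuine chain map; second, that $\Cone{\mu^z}$ is a $Q$-free resolution of $J/zJ$; and third, that this resolution is minimal. Once all three are established, the identification $\GG' \cong \Cone{\mu^z}$ follows from Lemma \ref{lem:JzJ} combined with the uniqueness of minimal free resolutions.

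The first claim is immediate, since the differential on $\Sigma^{-1}\GG_{>0}$ is $Q$-linear and $z \in Q$, so multiplication by $z$ commutes with the differential. For the second claim, I would appeal to the long exact sequence in homology coming from the standard short exact sequence of complexes $0 \to \Sigma^{-1}\GG_{>0} \to \Cone{\mu^z} \to \GG_{>0} \to 0$, in which the connecting homomorphism is induced by $\mu^z$; after rewriting, this long exact sequence reads
\[
\cdots \to H_n\!\left(\Sigma^{-1}\GG_{>0}\right) \xrightarrow{z\,\cdot} H_n\!\left(\Sigma^{-1}\GG_{>0}\right) \to H_n\!\left(\Cone{\mu^z}\right) \to H_{n-1}\!\left(\Sigma^{-1}\GG_{>0}\right) \to \cdots.
\]
Because $\Sigma^{-1}\GG_{>0}$ resolves $J$, its homology is concentrated in degree zero, where it is $J$. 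For $n \geq 2$ this forces $H_n(\Cone{\mu^z}) = 0$. In degree one, the sequence identifies $H_1(\Cone{\mu^z})$ with the $z$-torsion of $J$, which vanishes because $Q$ is a polynomial ring over a field (hence a domain) and $J$ is a nonzero ideal. In degree zero, the sequence yields $H_0(\Cone{\mu^z}) = J/zJ$, as required.

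For the third claim, the differential of $\Cone{\mu^z}$ is the familiar block matrix with the differential of $\Sigma^{-1}\GG_{>0}$ on the diagonal blocks and $\mu^z$ in the off-diagonal block. The diagonal entries lie in the maximal ideal of $Q$ by the minimality of $\GG$ (established in the preceding lemma), and $\mu^z$ is represented by the scalar $z$, a variable of $Q$ and hence in the maximal ideal. Minimality of $\Cone{\mu^z}$ follows.

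I do not expect a genuine obstacle here, as every step is either formal or a direct consequence of $z$ being a non-zero-divisor on the nonzero ideal $J$. The only bookkeeping needed is to dispose of the degenerate case $J = 0$ separately, in which $I_\Gamma/I$ and $J/zJ$ are both zero and the lemma is vacuous.
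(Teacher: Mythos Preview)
Your proposal is correct and follows essentially the same approach as the paper: both use the long exact sequence in homology associated to the mapping cone, invoke that $z$ is a non-zero-divisor on $J$ to kill $H_1$, and deduce minimality from the block form of the cone differential together with the minimality of $\GG$. Your version adds a few extra words (checking that $\mu^z$ is a chain map and disposing of the $J=0$ case), but the substance is identical.
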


\begin{proof}
    Since $\Sigma^{-1} \GG_{>0}$ is a free resolution of $J$, the long exact sequence in homology associated to the mapping cone reduces to
    \[
    \xymatrix{
    0 \ar[r] & H_1\!\left(\Cone{\mu^z}\right) \ar[r] & J \ar[r]^-{\cdot z} & J \ar[r] & H_0\!\left(\Cone{\mu^z}\right) \ar[r] & 0.
    }
    \]
    We simultaneously get $H_1\!\left(\Cone{\mu^z}\right) = 0$ and $H_0\!\left(\Cone{\mu^z}\right) \cong J/zJ$ by noting $z$ is a non-zero divisor and thus multiplication by $z$ has trivial kernel. It follows that $\Cone{\mu^z} \simeq J/zJ$.

    To see that the resolution $\Cone{\mu^z}$ is minimal, we note $\partial^{\Sigma^{-1}\GG_{>0}} \otimes_Q \Bbbk = 0 = \mu^{z} \otimes_Q \Bbbk$. It follows from the definition of $\partial^{\Cone{\mu^z}}$ that $\partial^{\Cone{\mu^z}} \otimes \Bbbk = 0$, and thus $\Cone{\mu^z}$ is minimal.
\end{proof}

As a result of Lemma \ref{lem:gPrime}, we get the following description of $\GG'$:
\[\xymatrix{
\GG' := \cdots \ar[r] & {\begin{matrix} \GG_4 \\ \oplus \\ \GG_3 \end{matrix}} \ar[rr]^-{\left( \begin{smallmatrix} -\partial_4^{\GG} & \mu^z \\ 0 & \partial_3^{\GG} \end{smallmatrix} \right)} && {\begin{matrix} \GG_3 \\ \oplus \\ \GG_2 \end{matrix}} \ar[rr]^-{\left( \begin{smallmatrix} -\partial_3^{\GG} & \mu^z \\ 0 & \partial_2^{\GG} \end{smallmatrix} \right)} && {\begin{matrix} \GG_2 \\ \oplus \\ \GG_1 \end{matrix}} \ar[rr]^-{\left( \begin{smallmatrix} -\partial_2^{\GG} & \mu^z \\ 0 & 0 \end{smallmatrix} \right)} && {\begin{matrix} \GG_1 \\ \oplus \\ 0 \end{matrix}} \ar[r] & 0
.}\]
We capitalize on the structure of $\GG'$ to build a chain map $\Psi: \GG' \to \FF$ such that $\Cone{\Psi} \simeq Q/I(\Gamma)$. Specifically, we see that (the truncation of) $\GG$ is a direct summand of $\GG'$. We build $\Psi$ by first building $\Phi: \GG \to \FF$. We return our attention to Lemma \ref{lem:zColon}, which starts the process of constructing $\Phi$.  

Since $(z) = (I:_Q J)$, we get a well-defined $Q$-module homomorphism $Q/J \to Q/I$ given by $q + J \mapsto zq + I$. This homomorphism lifts to a chain map $\Phi: \GG \to \FF$. We refer to $\Phi$ as ``$z$-ification,'' a term that gets its name from the explicit definition of $\Phi$ given in Proposition \ref{prop:PhiDef}. To understand the proposition, we utilize the subsets definition of the Taylor resolution (Definition \ref{taylordef}) along with the following definition and notation.

\begin{remark}
    To use the subsets definition of the Taylor resolution, we need total orderings on $G(I)$ and $G(J)$. For $G(I)$ we define $zx_i < zx_j$ if and only if $i < j$. Similarly, for $G(J)$, we define $x_i y_{i,\ell} < x_j y_{j,p}$ if and only if $i < j$ or $i = j$ and $\ell < p$.
\end{remark}

\begin{defn}
    Let $W \subseteq G(J)$ and let $w = x_i y_{i,j} \in W$. The \emph{$z$-ification of $w$} is the monomial $w_z := zx_i$. Moreover, the \emph{$z$-ification of $W$} is the (ordered) multi-set (a set allowing repeated entries) given by \[W_z := \{w_z: w \in W\} \subseteq G(I).\] Note that the elements of $W_z$ occur in the same order as the corresponding terms occur in $W$, that is, if $v,w \in G(J)$, then $v < w$ in $G(J)$ if and only if $v_z \leq w_z$ in $G(I)$. 
\end{defn}  

Recall from Example \ref{taylorex} the notation $\sigma(V,W)=|\{(v,w)\in V \times W : v>w\}|$.  

\begin{lem}\label{lem:zSigma}
    Let $V,W \subseteq G(J)$. If neither of the multisets $V_z$ and $W_z$ has repeated elements and $V_z \cap W_z = \emptyset$, then $\sigma(V,W) = \sigma(V_z,W_z)$.
\end{lem}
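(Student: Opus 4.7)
The plan is to unwind the definition of $\sigma$ and exhibit a bijection between the set counted by $\sigma(V,W)$ and the set counted by $\sigma(V_z,W_z)$ that is induced by the $z$-ification map. The hypotheses are precisely what is needed to make this map both well-defined on pairs and order-preserving.

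First, I would note that the map $v \mapsto v_z$ is injective on $V$ and on $W$: if $v, v' \in V$ with $v_z = v'_z$, then $V_z$ would have a repeated element, contradicting the hypothesis (similarly for $W$). Hence the assignment $(v,w) \mapsto (v_z, w_z)$ is a well-defined bijection $V \times W \to V_z \times W_z$ (surjectivity follows because every element of $V_z$, resp.\ $W_z$, arises as $v_z$ for a unique $v \in V$, resp.\ $W$).

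Next, I would verify that this bijection identifies the pairs counted on each side. Write $v = x_i y_{i,\ell} \in V$ and $w = x_j y_{j,p} \in W$, so that $v_z = zx_i$ and $w_z = zx_j$. By the orderings on $G(J)$ and $G(I)$, we have $v > w$ iff either $i > j$, or $i = j$ and $\ell > p$; and $v_z > w_z$ iff $i > j$. The hypothesis $V_z \cap W_z = \emptyset$ rules out $i = j$ (otherwise $v_z = zx_i = w_z$ would lie in $V_z \cap W_z$), so the case $i = j$ does not arise. Therefore $v > w$ iff $v_z > w_z$, meaning the bijection restricts to a bijection between $\{(v,w) \in V \times W : v > w\}$ and $\{(v_z, w_z) \in V_z \times W_z : v_z > w_z\}$, which gives $\sigma(V,W) = \sigma(V_z, W_z)$.

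There is really no substantive obstacle here; the main thing to be careful about is using each hypothesis in the right place, namely that injectivity of $z$-ification on $V$ and on $W$ (hence the bijection) comes from the ``no repeated elements" assumptions, while the order-preservation, which would fail in the $i = j$ case, uses $V_z \cap W_z = \emptyset$.
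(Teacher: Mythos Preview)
Your proposal is correct and follows essentially the same approach as the paper: both establish a bijection between the sets counted by $\sigma(V,W)$ and $\sigma(V_z,W_z)$ via the $z$-ification map. If anything, your argument is more explicit than the paper's about why $v > w$ is equivalent to $v_z > w_z$ (you isolate and exclude the $i=j$ case using $V_z \cap W_z = \emptyset$), whereas the paper leans on the earlier remark in the definition of $z$-ification that the ordering is preserved.
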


\begin{proof}
    Since $V_z$ does not have any repeated elements, each $v_z \in V_z$ can be tracked back to exactly one $v \in V$. The same applies to $W_z$. Since $V_z \cap W_z = \emptyset$, we also have $V \cap W = \emptyset$. Thus, $\sigma(V_z,W_z)$ and $\sigma(V,W)$ are both well-defined, and the desired equality follows from below:
    \begin{align*}
        \sigma(V,W) &= \left|\{ (v,w) \in V \times W : v > w  \} \right| \\
         &= \left|\{ (v_z,w_z) \in V_z \times W_z : v_z > w_z  \} \right| \\
         &= \sigma(V_z,W_z). \qedhere
    \end{align*}
\end{proof}

We expand on the notation associated with the Taylor resolution.

\begin{notn}
    Recall that if $W \subseteq G(J)$, then $m_W = \lcm\{x_iy_{i,j} \in W\}$. We further define \[x_W := \lcm\{x_i: x_iy_{i,j} \in W\} \hspace{.5cm} \hbox{ and } \hspace{.5cm} y_W := \lcm\{y_{i,j} : x_i y_{i,j} \in W\}.\] We immediately get $m_W = x_W y_W$.
\end{notn}

We extend these definitions to $V \subseteq G(I)$ by setting $m_V := \lcm\{zx_i \in V\}$ and $x_V := \lcm\{x_i : zx_i \in V \}$. In this case, we have $m_V = z x_V$. By further extending this notation to multisets, we find that for any $W \in G(J)$, we have $x_W = x_{W_z}$. This is crucial for Proposition \ref{prop:PhiDef}.

Recall that $\FF$ and $\GG$ are Taylor resolutions.  The standard basis elements of $\FF$ and $\GG$ are denoted $f_V$ and $g_W$, respectively, where $V \subseteq G(I)$ and $W \subseteq G(J)$. 

\begin{prop}\label{prop:PhiDef}
    Let $\emptyset \neq W \subseteq G(J)$.  The chain map $\Phi: \GG \to \FF$ is given by $\Phi_0(1) = z$ and $\Phi(g_W) = y_W f_{W_z}$, where $f_{W_z} = 0$ if there exists some $i$ such that $zx_i$ occurs more than once in $W_z$.  
\end{prop}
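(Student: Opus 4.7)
The plan is to verify, by increasing homological degree, that the formula $\Phi(g_W) = y_W f_{W_z}$ (with $f_{W_z}=0$ when $W_z$ has multiset repeats) commutes with the differentials. In homological degree zero, the map $\Phi_0\colon Q\to Q$ given by $q\mapsto zq$ lifts the $Q$-module map $Q/J \to Q/I$, $q + J \mapsto zq + I$, which is well-defined precisely because $(I:_Q J) = (z)$ by Lemma~\ref{lem:zColon}. Degree one is a direct check: $\Phi_0(\partial^{\GG}(g_{\{x_iy_{i,j}\}})) = zx_iy_{i,j} = \partial^{\FF}(y_{i,j}f_{\{zx_i\}}) = \partial^{\FF}(\Phi(g_{\{x_iy_{i,j}\}}))$.

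For degree $n \geq 2$, I would split on whether $W_z$ has repeated entries. If $W_z$ has no repeats, then the $x$-indices appearing in $W$ are distinct, so $w \mapsto w_z$ is an order-preserving bijection $W \to W_z$, and Lemma~\ref{lem:zSigma} then yields $\sigma(w,W) = \sigma(w_z,W_z)$. The coefficient equality to check is
\[
y_W \cdot \frac{m_{W_z}}{m_{W_z \setminus \{w_z\}}} = \frac{m_W}{m_{W \setminus \{w\}}} \cdot y_{W \setminus \{w\}},
\]
which, via the factorizations $m_W = x_Wy_W$, $m_{W_z} = zx_W$, and $x_{(W\setminus\{w\})_z} = x_{W\setminus\{w\}}$, collapses to a tautology. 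Thus both expansions agree term by term.

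If $W_z$ has repeats then $\Phi(g_W) = 0$, so it remains to show $\Phi(\partial^{\GG}(g_W)) = 0$. Since a summand $\Phi(g_{W\setminus\{w\}})$ vanishes unless $(W\setminus\{w\})_z$ has no repeats, any configuration of $W_z$ with some $zx_i$ of multiplicity at least three, or with two distinct doubled entries, forces every summand to be zero. The only remaining case is that a unique $zx_i$ is doubled in $W_z$, arising from $w_1 = x_iy_{i,j_1}$ and $w_2 = x_iy_{i,j_2}$ with $w_1 < w_2$, and here only $w\in\{w_1,w_2\}$ can contribute. The key observation is that no element of $W$ lies strictly between $w_1$ and $w_2$ in the order on $G(J)$: such an element would necessarily have the form $x_iy_{i,q}$ with $j_1 < q < j_2$, hence would itself $z$-ify to $zx_i$, violating the multiplicity-two hypothesis. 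Consequently $\sigma(w_2,W) = \sigma(w_1,W)+1$, yielding opposite signs, while a brief calculation using $y_{W\setminus\{w_s\}} = y_W/y_{i,j_s}$ and $m_W/m_{W\setminus\{w_s\}} = y_{i,j_s}$ for $s\in\{1,2\}$ shows the two coefficients have equal magnitude $y_W$ in front of the common basis element $f_{W_z\setminus\{zx_i\}}$; the pair cancels. The main obstacle is precisely this subcase, where the specific combinatorics of the ordering on $G(J)$ must be invoked in an essential way; everything else is routine bookkeeping with the Taylor differential.
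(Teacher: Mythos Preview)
Your proof is correct and follows essentially the same strategy as the paper: check degree one directly, then in higher degrees split on whether $W_z$ has repeats, handling the no-repeat case term by term and the repeat case by showing the surviving summands cancel. The only difference is in how the sign relation is obtained in the repeat case: the paper reindexes so that the doubled pair becomes $x_1y_{1,1},\,x_1y_{1,2}$ (the two smallest generators, giving signs $+1$ and $-1$ immediately and absorbing your multiplicity case split into the observation that every other summand still contains the pair), whereas you keep the original labeling and use the explicit order on $G(J)$ to deduce $\sigma(w_2,W)=\sigma(w_1,W)+1$.
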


\begin{proof}
    We first consider elements of $\GG_1$, which has basis $\{g_{\{x_iy_{i,j}\}} : x_iy_{i,j} \in J\}$. In this case, we find
    \begin{align*}
        \partial^{\FF}_1\!\left( \Phi_1\!\left(g_{\{x_iy_{i,j}\}}\right) \right) &= y_{i,j} \partial^{\FF}_1\!\left(f_{\{zx_i\}}\right) \\
        &= z x_i y_{i,j} \\
        &= \Phi_0\!\left(x_i y_{i,j}\right) \\
        &= \Phi_0\!\left( \partial^{\GG}_1\! \left( g_{\{ x_iy_{i,j} \}} \right) \right).
    \end{align*}
    That is, $\partial^{\FF}_1 \circ \Phi_1 = \Phi_0 \circ \partial^{\GG}_1$. We now consider higher degrees.

    First, consider $W \subseteq G(J)$ such that the multiset $W_z$ has no repeated elements. In this case, we have $f_{W_z} \neq 0$. If any subset of $W_z$ contains repeated elements, then we find $W_z$ also contains repeated elements, contradicting our initial assumption. Thus, for any $w_z \in W_z$ we also have $f_{W_z \backslash w_z} \neq 0$. We now compute the following, recalling that $x_W = x_{W_z}$:
    \begin{align*}
        \partial^{\FF}_{|W_z|}\! \left( \Phi_{|W|}\!\left( g_W \right) \right) &= y_W \partial_{|W_z|}^{\FF}\!\left( f_{W_z} \right) \\
         &= y_W \sum_{w_z \in W_z} (-1)^{\sigma(w_z,W_z)} \frac{m_{W_z}}{m_{W_z \backslash w_z}} f_{W_z \backslash w_z} \\
         &= y_W \sum_{w_z \in W_z} (-1)^{\sigma(w_z,W_z)} \frac{z x_{W_z}}{z x_{W_z \backslash w_z}} f_{W_z \backslash w_z} \\
         &= \sum_{w \in W} (-1)^{\sigma(w,W)} \frac{x_{W} y_W}{x_{W \backslash w} y_{W \backslash w}} \left(y_{W \backslash w} f_{W_z \backslash w_z}\right) \\
         &= \sum_{w \in W} (-1)^{\sigma(w,W)} \frac{m_{W}}{m_{W \backslash w} } \Phi_{|W| - 1}\!\left(g_{W \backslash w}\right) \\
         &= \Phi_{|W| - 1}\!\left( \partial^{\GG}_{|W|} \left( g_W \right) \right).
    \end{align*}
    We see that we have commutativity when $W_z$ does not have repeated elements and turn our attention to when there are repeated elements.
    
    In the case that $W_z$ has repeated elements, we defined $f_{W_z} = 0$ and find $\Phi(g_W) = 0$. This then yields $\partial^{\FF}_{|W_z|}\!\left(\Phi_{|W|}\!(g_W)\right) = 0$. Therefore, to finish the proof that $\Phi$ is a chain map, we prove that $\Phi_{|W| - 1}\!\left(\partial^{\GG}_{|W|}\!(g_W)\right) = 0$ whenever $W_z$ has repeats. To do this, we note that $W_z$ has repeated element $zx_i$ if and only if there exist distinct $j_1$ and $j_2$ such that $x_i y_{i,j_1}, x_i y_{i,j_2} \in W$. For simplicity, and without loss of generality, we re-index so that we are considering the case of $x_1 y_{1,1}, x_1 y_{1,2} \in W$. We find \[\partial^{\GG}_{|W|}(g_{W}) = y_{1,1} g_{W \backslash x_1 y_{1,1}} - y_{1,2} g_{W \backslash x_1 y_{1,2}} + \sum_{w \in W \backslash \{x_1y_{1,1}, x_1y_{1,2}\}} (-1)^{\sigma(w,W)} \frac{m_W}{m_{W \backslash w}} g_{W \backslash w}. \] Since $x_1y_{1,1}, x_1y_{1,2} \in W \backslash w$ for all the terms in the final summation notation, applying $\Phi$ to the summation yields 0 and thus
    \begin{align*}
        \Phi_{|W| - 1}\!\left( \partial^{\GG}_{|W|} (g_W) \right) &= y_{1,1} \Phi_{|W| - 1}\left(g_{W \backslash x_1y_{1,1}}\right) - y_{1,2} \Phi_{|W| - 1}\left(g_{W \backslash x_1y_{1,2}}\right) \\
         &= y_{1,1} y_{W \backslash x_1y_{1,1}} f_{(W\backslash x_1y_{1,1})_z} - y_{1,2} y_{W \backslash x_1y_{1,2}} f_{(W\backslash x_1y_{1,2})_z} \\
         &= y_{1,1} \frac{y_W}{y_{1,1}} f_{W_z \backslash x_1 z} - y_{1,2} \frac{y_W}{y_{1,2}} f_{W_z \backslash x_1 z} \\
         &= 0.
    \end{align*}
    Thus, $\partial_i^{\FF} \circ \Phi_i = \Phi_{i - 1} \circ \partial_i^{\GG}$, and so $\Phi$ is a chain map lifting multiplication by $z$.
\end{proof}

We now construct the chain map $\Psi : \GG' \to \FF$ whose mapping cone is at the center of our focus. Note that elements of $\GG'$ are ordered pairs $(g_1,g_2)$, where $g_1, g_2 \in \GG$ with $|g_1|, |g_2| > 0$.

\begin{thm}\label{thm:diam4res}
    Let $\GG' = \Cone{\mu^z}$ as in Lemma \ref{lem:gPrime}, and let $\Phi$ be the chain map from Proposition~\ref{prop:PhiDef}. If $\Psi: \GG' \to \FF$ is defined by \[\Psi_0 = \begin{pmatrix} \partial^{\GG}_1 & 0 \end{pmatrix} \hbox{ and } \Psi_i = \begin{pmatrix} 0 & \Phi_i \end{pmatrix} \hbox{ for } i > 0, \]
    then $\Psi$ is a chain map and $\Cone{\Psi}$ is the minimal free resolution of $Q/ I(\Gamma)$.
\end{thm}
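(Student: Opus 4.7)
The plan is to verify three claims in sequence: that $\Psi$ is a chain map, that $\Cone{\Psi}$ is a resolution of $Q/I_\Gamma$, and that this resolution is minimal. All three are made tractable because the heavy lifting has already been done by Proposition \ref{prop:PhiDef} (establishing $\Phi$ as a chain map lifting multiplication by $z$) and by Lemmas \ref{lem:gPrime} and \ref{lem:JzJ} (identifying $\GG'$ as the minimal resolution of $I_\Gamma/I$).

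For the chain-map property, I would verify $\partial_i^\FF \circ \Psi_i = \Psi_{i-1} \circ \partial_i^{\GG'}$ in two cases dictated by the piecewise definition of $\Psi$. When $i=1$, applying $\Psi_0 = \partial_1^\GG$ to $\partial_1^{\GG'}(g_2, g_1) = -\partial_2^\GG(g_2) + zg_1$ yields $z \partial_1^\GG(g_1)$ since $\partial_1^\GG \partial_2^\GG = 0$, while $\partial_1^\FF(\Psi_1(g_2, g_1)) = \partial_1^\FF(\Phi_1(g_1)) = \Phi_0(\partial_1^\GG(g_1)) = z \partial_1^\GG(g_1)$ by Proposition \ref{prop:PhiDef}. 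When $i \geq 2$, both sides reduce immediately to $\Phi_{i-1}(\partial_i^\GG(g_1))$: the top-row contributions from $-\partial_{i+1}^\GG$ and $\mu^z$ land in the first summand of $\GG'_{i-1}$, where $\Psi_{i-1} = (0, \Phi_{i-1})$ annihilates them, and the surviving bottom-row piece $\partial_i^\GG(g_1)$ is carried over by the chain-map identity for $\Phi$.

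For the homology computation, I would use the short exact sequence of complexes $0 \to \FF \to \Cone{\Psi} \to \Sigma \GG' \to 0$ and its induced long exact sequence in homology. By Lemmas \ref{lem:gPrime} and \ref{lem:JzJ}, $H_i(\GG') = 0$ for $i > 0$ and $H_0(\GG') \cong J/zJ \cong I_\Gamma/I$; similarly $H_i(\FF) = 0$ for $i > 0$ with $H_0(\FF) = Q/I$. The long exact sequence thus collapses to
\[
0 \to H_1(\Cone{\Psi}) \to I_\Gamma/I \xrightarrow{\Psi_*} Q/I \to H_0(\Cone{\Psi}) \to 0,
\]
with all higher homology vanishing. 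The crucial step is to identify $\Psi_*$ with the natural inclusion: on a generator $g_{\{x_i y_{i,j}\}} \in \GG_1 = \GG'_0$, one has $\Psi_0(g_{\{x_iy_{i,j}\}}) = x_i y_{i,j}$, and chasing through $H_0(\GG') \cong J/zJ \cong I_\Gamma/I$ of Lemma \ref{lem:JzJ} confirms that the class of $x_iy_{i,j}$ maps to the class of $x_iy_{i,j}$ in $Q/I$. Injectivity of $I_\Gamma/I \hookrightarrow Q/I$ then forces $H_1(\Cone{\Psi}) = 0$ and $H_0(\Cone{\Psi}) = Q/I_\Gamma$.

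For minimality, I would examine each block of the differential of $\Cone{\Psi}$, a triangular matrix assembled from $\partial^\FF$, $\partial^{\GG'}$, and $\Psi$. The Taylor resolution $\FF$ of the star ideal $I = (zx_1, \ldots, zx_n)$ is minimal since consecutive $\lcm$-quotients differ by a variable $x_i$, and $\partial^{\GG'}$ is minimal by Lemma \ref{lem:gPrime}. For the off-diagonal $\Psi$-block, the matrix of $\Psi_0 = \partial_1^\GG$ has entries $x_iy_{i,j} \in \m$, and for $i \geq 1$ the matrix of $\Psi_i = \Phi_i$ has entries $y_W$, each a positive-degree product of leaf variables. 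Hence every entry lies in $\m$. The principal obstacle I anticipate is not any single calculation but rather the careful identification of $\Psi_*$ with the inclusion $I_\Gamma/I \hookrightarrow Q/I$ under the chain of isomorphisms supplied by Lemmas \ref{lem:gPrime} and \ref{lem:JzJ}; once this bookkeeping is nailed down, the remaining steps follow essentially formally from Proposition \ref{prop:PhiDef} and the explicit matrix form of $\partial^{\GG'}$.
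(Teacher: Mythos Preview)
Your proposal is correct and follows essentially the same three-step structure as the paper's proof: verify the chain-map identity in the two cases dictated by the piecewise definition of $\Psi$, collapse the mapping-cone long exact sequence using Lemmas \ref{lem:gPrime} and \ref{lem:JzJ}, and check minimality blockwise. The only cosmetic difference is that the paper computes $H_0(\Cone{\Psi})$ directly as $Q/(\Img\partial_1^\FF + \Img\partial_1^\GG) = Q/(I+J)$ and then reads off $H_1 = 0$ from exactness, whereas you instead identify $\Psi_*$ with the inclusion $I_\Gamma/I \hookrightarrow Q/I$ and deduce both homology groups from that; these are the same argument presented in opposite order.
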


\begin{proof}
    We first show that $\Psi$ is a chain map.  After doing so, we use Approach \ref{rem:SES} to argue that $\Cone{\Psi}$ is a free resolution.

    We wish to show that $\partial^{\FF}_{i+1} \circ \Psi_{i+1} = \Psi_{i} \circ \partial^{\GG'}_{i}$ for all $i \geq 0$. We start with $i = 0$:
    \begin{align*}
        \Psi_0 \circ \partial^{\GG'}_1 &= \begin{pmatrix} \partial^{\GG}_1 & 0 \end{pmatrix} \circ \begin{pmatrix} -\partial^{\GG}_2 & \mu^z \\ 0 & \partial^{\GG}_1 \end{pmatrix} \\
         &= \begin{pmatrix} - \partial^{\GG}_1 \circ \partial^{\GG}_2 & \partial^{\GG}_1 \circ \mu^z \end{pmatrix} \\
         &= \begin{pmatrix} 0 & z \partial^{\GG}_1 \end{pmatrix} \\
         &= \begin{pmatrix} 0 & \Phi_0 \circ \partial^{\GG}_1 \end{pmatrix} \\
         &= \begin{pmatrix} 0 & \partial_1^{\FF} \circ \Phi_1 \end{pmatrix} \\
         &= \partial^{\FF}_1 \circ \begin{pmatrix} 0 & \Phi_1 \end{pmatrix} \\
         &= \partial^{\FF}_1 \circ \Psi_1.
    \end{align*}
    Since $\Psi$ has the same form for all $i > 0$, we address the remaining cases simultaneously:
    \begin{align*}
        \Psi_i \circ \partial^{\GG'}_{i+1} &= \begin{pmatrix} 0 & \Phi_i \end{pmatrix} \circ \begin{pmatrix} -\partial^{\GG}_{i+2} & \mu^z \\ 0 & \partial^{\GG}_{i+1}\end{pmatrix} \\
         &= \begin{pmatrix} 0 & \Phi_i \circ \partial^{\GG}_{i+1} \end{pmatrix} \\
         &= \begin{pmatrix} 0 & \partial^{\FF}_{i+1} \circ \Phi_{i+1} \end{pmatrix} \\
         &= \partial^{\FF}_{i+1} \circ \begin{pmatrix} 0 & \Phi_{i+1} \end{pmatrix} \\
         &= \partial^{\FF}_{i+1} \circ \Psi_{i+1}.
    \end{align*}
    This shows $\Psi$ is a chain map, which tells us that $\Cone{\Psi}$ is a complex. Since $\GG'$ and $\FF$ are resolutions, we see that the associated long exact sequence in homology for $\Cone{\Psi}$ reduces to the exact sequence
    \[
    \xymatrix{
    0 \ar[r] & H_1\left(\Cone{\Psi}\right) \ar[r] & \underset{\cong I(\Gamma) / I}{\underbrace{\frac{J}{zJ}}} \ar[r] & \dfrac{Q}{I} \ar[r] & H_0\left(\Cone{\Psi}\right) \ar[r] & 0.
    }
    \]
    We observe that 
    \begin{align*}
        H_0\left(\Cone{\Psi}\right) &= \coker \partial^{\Cone{\Psi}}_1 \\
         &= \frac{Q}{\Img \partial^{\FF}_1 + \Img \partial^{\GG}_1} \\
         &= \frac{Q}{I + J} \\
         &= \frac{Q}{I(\Gamma)}.
    \end{align*}
    Moreover, since $Q/I(\Gamma) \cong (Q/I) / (I(\Gamma) / I)$, we have $H_1\left(\Cone{\Psi}\right) = 0$, and therefore $\Cone{\Psi} \simeq Q/I(\Gamma)$. We see that this recovers the short exact sequence from Approach \ref{rem:SES}.

    Lastly, since $\GG'$ and $\FF$ are minimal and $\Psi_i \otimes \Bbbk = 0$ for all $i$, we must have $\partial^{\Cone{\Psi}} \otimes \Bbbk = 0$. That is, $\Cone{\Psi}$ is the minimal free resolution.
\end{proof}

We now show how to express $\Cone{\Psi}$ in terms of $\FF$, $\GG$, $\mu^z$, and $\Phi$. 
 This helps establish the notation we will use when proving that $\Cone{\Psi}$ is a dg algebra.  

\begin{remark}\label{rem:structure}
    The free resolution $\Cone{\Psi}$ takes the following form:
    \[
    \xymatrix{
    \Cone{\Psi} := \cdots \ar[r] & {\begin{matrix} \FF_3 \\ \oplus \\ \GG_3 \\ \oplus \\ \GG_2 \end{matrix}} \ar[rr]^-{\left(\begin{smallmatrix} \partial_3^{\FF} & 0 & \Phi_2 \\ 0 & \partial_3^{\GG} & -\mu^z \\ 0 & 0 & -\partial_2^{\GG} \end{smallmatrix}\right)} && {\begin{matrix} \FF_2 \\ \oplus \\ \GG_2 \\ \oplus \\ \GG_1 \end{matrix}} \ar[rr]^-{\left(\begin{smallmatrix} \partial_2^{\FF} & 0 & \Phi_1 \\ 0 & \partial_2^{\GG} & -\mu^z \\ 0 & 0 & 0 \end{smallmatrix}\right)} && {\begin{matrix} \FF_1 \\ \oplus \\ \GG_1 \\ \oplus \\ 0 \end{matrix}} \ar[rr]^-{\left(\begin{smallmatrix} \partial_1^{\FF} & \partial_1^{\GG} & 0 \\ 0 & 0 & 0 \\ 0 & 0 & 0 \end{smallmatrix}\right)} && {\begin{matrix} \FF_0 \\ \oplus \\ 0 \\ \oplus \\ 0 \end{matrix}} \ar[r] & 0.
    }
    \]
\end{remark}

\begin{corollary}
     Let $\Gamma$ be a tree of diameter four with $\ell=a_1+\ldots+a_n$, so that $I(\Gamma)$ has the form in Equation \eqref{Iform}.  The $i$th Betti number of $Q/I(\Gamma)$ is
    \[
    \beta^Q_i=\begin{cases}
        1,&i=0,\\\\
        \displaystyle \ell+n&i=1,\\\\
        \displaystyle {\binom{\ell+1}{i}}+{\binom{n}{i}},&i>1.
        \end{cases}
    \]
    It follows that the projective dimension of $Q/I(\Gamma)$ is the maximum of $\ell+1$ and $n$. 
\end{corollary}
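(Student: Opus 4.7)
The plan is to read the Betti numbers directly off the minimal free resolution $\Cone{\Psi}$ of $Q/I_\Gamma$ constructed in Theorem \ref{thm:diam4res}. By Remark \ref{rem:structure}, the free module in homological degree $i$ of $\Cone{\Psi}$ decomposes as $\FF_0$ when $i=0$, as $\FF_1 \oplus \GG_1$ when $i=1$, and as $\FF_i \oplus \GG_i \oplus \GG_{i-1}$ when $i \geq 2$. Since $\Cone{\Psi}$ is the minimal free resolution of $Q/I_\Gamma$, its ranks in each homological degree are exactly the Betti numbers $\beta_i^Q(Q/I_\Gamma)$. So the proof reduces to computing the ranks of $\FF_i$ and $\GG_i$.

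Next, I would invoke the fact that $I = (zx_1,\ldots,zx_n)$ is the edge ideal of a star (diameter two) with $n$ minimal generators, and $J$ is the edge ideal of a disjoint union of stars (a forest of diameter at most two) with $\ell = a_1 + \cdots + a_n$ minimal generators. By Observation \ref{d012obs}, both $\FF$ and $\GG$ are minimal Taylor resolutions. By Definition \ref{taylordef}, the rank of the degree-$i$ module in a Taylor resolution on $t$ generators is $\binom{t}{i}$, and thus $\operatorname{rank} \FF_i = \binom{n}{i}$ and $\operatorname{rank} \GG_i = \binom{\ell}{i}$.

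Plugging these into the description of $\Cone{\Psi}$ from Remark \ref{rem:structure} yields $\beta_0 = 1$, $\beta_1 = n + \ell$, and for $i \geq 2$,
\[
\beta_i^Q = \binom{n}{i} + \binom{\ell}{i} + \binom{\ell}{i-1} = \binom{n}{i} + \binom{\ell+1}{i},
\]
where the last equality is Pascal's identity. For the projective dimension, I would observe that $\beta_i^Q \neq 0$ if and only if either $\binom{n}{i} \neq 0$ or $\binom{\ell+1}{i} \neq 0$, i.e., $i \leq n$ or $i \leq \ell+1$. Hence $\operatorname{pd}_Q(Q/I_\Gamma) = \max(n, \ell+1)$. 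There is no real obstacle here; the entire content lies in Theorem \ref{thm:diam4res}, and this corollary is the immediate numerical accounting that follows from reading off ranks once minimality of $\Cone{\Psi}$ is in hand.
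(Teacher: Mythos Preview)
Your proposal is correct and follows exactly the same approach as the paper: read off the ranks of the free modules in $\Cone{\Psi}$ from Remark \ref{rem:structure}, use that $\FF$ and $\GG$ are Taylor resolutions to identify those ranks as binomial coefficients, and apply Pascal's Identity to combine $\binom{\ell}{i}+\binom{\ell}{i-1}$ into $\binom{\ell+1}{i}$. The paper's own proof is simply a one-line remark to this effect; you have spelled out the same argument in full.
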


\begin{proof}
    We only remark that the $\displaystyle{\binom{\ell+1}{i}}$ comes from Pascal's Identity.  The rest is clear.  
\end{proof}

Similarly, one can deduce the graded Betti numbers from Remark \ref{rem:structure}.

\begin{corollary}
    Let $\Gamma$ be a tree of diameter four with $\ell=a_1+\ldots+a_n$, so that $I(\Gamma)$ has the form in Equation \eqref{Iform}. The graded Betti numbers of $Q/I(\Gamma)$, $Q/I$, and $Q/J$ satisfy
    \[\beta_{i,j}^Q\left(\frac{Q}{I(\Gamma)}\right) = \begin{cases} \beta_{0,j}^Q\left(\frac{Q}{I}\right) & i = 0, \\\\
    \beta_{1,j}^Q\left(\frac{Q}{I}\right) + \beta_{1,j}^Q\left(\frac{Q}{J}\right) & i = 1,\\\\
    \beta_{i,j}^Q\left(\frac{Q}{I}\right) + \beta_{i,j}^Q\left(\frac{Q}{J}\right) + \beta_{i-1,j-1}^Q\left(\frac{Q}{J}\right) & i \geq 2. \end{cases}\]
    Equivalently, one has the Betti splitting $I(\Gamma)=I+J$, i.e.,
    \[
    \beta_{i,j}^Q(I(\Gamma)) = \beta_{i,j}^Q(I) + \beta_{i,j}^Q(J) + \beta_{i-1,j}^Q(I \cap J).
    \]
\end{corollary}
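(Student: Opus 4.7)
The plan is to read off the graded Betti numbers directly from the explicit minimal free resolution $\Cone{\Psi}$ of $Q/I_\Gamma$ given by Theorem \ref{thm:diam4res} and described in Remark \ref{rem:structure}. Since $\Cone{\Psi}$ is a minimal $Q$-free resolution, the graded Betti number $\beta_{i,j}^Q(Q/I_\Gamma)$ equals the number of degree-$j$ generators of $\Cone{\Psi}_i$, which splits as a sum of contributions from the three summands $\FF_i$, $\GG_i$, and $\GG_{i-1}$ (the last of these being absent for $i \leq 1$).

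The content of the proof lies in tracking the internal grading on each summand. Since $\FF$ is the minimal free resolution of $Q/I$, the summand $\FF_i$ decomposes as $\bigoplus_j Q(-j)^{\beta_{i,j}^Q(Q/I)}$, and similarly $\GG_i$ contributes $\bigoplus_j Q(-j)^{\beta_{i,j}^Q(Q/J)}$. The third summand, however, is a copy of $\GG_{i-1}$ that is wired into $\Cone{\Psi}_i$ via the differential component $\mu^z$ of $\GG' = \Cone{\mu^z}$; because $\mu^z$ is multiplication by $z$, it is homogeneous of internal degree $+1$, and hence for $\Cone{\mu^z}$ to be a graded complex the corresponding copy of $\GG_{i-1}$ must be internally shifted by one. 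That copy therefore decomposes as
\[
\bigoplus_j Q(-j-1)^{\beta_{i-1,j}^Q(Q/J)} \;=\; \bigoplus_j Q(-j)^{\beta_{i-1,j-1}^Q(Q/J)}.
\]
Summing the three contributions yields the formula for $i \geq 2$, and the cases $i = 0$ and $i = 1$ follow immediately from $\Cone{\Psi}_0 = \FF_0$ and $\Cone{\Psi}_1 = \FF_1 \oplus \GG_1$, respectively. The total Betti numbers from the preceding corollary are recovered by summing over $j$ and applying Pascal's identity as indicated.

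For the Betti splitting reformulation, I would use the standard identity $\beta_{i,j}^Q(I_\Gamma) = \beta_{i+1,j}^Q(Q/I_\Gamma)$ (and the analogous identities for $I$ and $J$) to restate the graded formula just proved as
\[
\beta_{i,j}^Q(I_\Gamma) \;=\; \beta_{i,j}^Q(I) + \beta_{i,j}^Q(J) + \beta_{i-1,j-1}^Q(J).
\]
Comparing with the desired splitting, the claim reduces to proving $\beta_{i-1,j-1}^Q(J) = \beta_{i-1,j}^Q(I \cap J)$. The proof of Lemma \ref{lem:JzJ} established $I \cap J = zJ$, and since $z$ is a nonzerodivisor on $J$, multiplication by $z$ is a graded isomorphism $J(-1) \xrightarrow{\cong} zJ = I \cap J$. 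The required equality of Betti numbers follows from this shift, completing the Betti splitting identity.

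The only subtlety in the argument is bookkeeping the internal-degree shift on the $\GG_{i-1}$ summand induced by $\mu^z$; once that is in place, the rest is a direct application of the minimal resolution built in Theorem \ref{thm:diam4res} together with the module identification $I \cap J = zJ$ from Lemma \ref{lem:JzJ}.
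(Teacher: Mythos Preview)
Your proposal is correct and follows essentially the same approach as the paper's proof: read the graded Betti numbers off of the explicit resolution in Remark \ref{rem:structure}, noting the degree-one twist on the $\GG_{i-1}$ summand from $\mu^z$, and then convert to the Betti splitting via $\beta_{i,j}^Q(I_\Gamma)=\beta_{i+1,j}^Q(Q/I_\Gamma)$ together with $I\cap J=zJ$. Your write-up is in fact a bit more explicit about the grading shift than the paper's, but the underlying argument is identical.
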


\begin{proof}
The formula for the graded Betti numbers of $Q/I(\Gamma)$ follows from Remark \ref{rem:structure} with the extra twist on the last term coming from $\mu^z$ being multiplication by $z$. We now deduce the Betti splitting by considering the following two cases.
For $i=0$,
    \begin{align*}
        \beta_{0,j}^Q(I(\Gamma)) &= \beta_{1,j}^Q\left(\frac{Q}{I(\Gamma)}\right) \\
         &= \beta_{1,j}^Q\left(\frac{Q}{I}\right) + \beta_{1,j}^Q\left(\frac{Q}{J}\right) \\
         &= \beta_{0,j}^Q\left(I\right) + \beta_{0,j}^Q\left(J\right) \\
         &= \beta_{0,j}^Q\left(I\right) + \beta_{0,j}^Q\left(J\right) + \underset{=0}{\underbrace{\beta_{-1,j}^Q\left(I \cap J\right)}}.
    \end{align*}
For $i > 0$,
    \begin{align*}
        \beta_{i,j}^Q(I(\Gamma)) &= \beta_{i+1,j}^Q\left(\frac{Q}{I(\Gamma)}\right) \\
         &= \beta_{i+1,j}^Q\left(\frac{Q}{I}\right) + \beta_{i+1,j}^Q\left(\frac{Q}{J}\right) + \beta_{i,j-1}^Q\left(\frac{Q}{J}\right) \\
         &= \beta_{i,j}^Q\left(I\right) + \beta_{i,j}^Q\left(J\right) + \beta_{i-1,j-1}^Q\left(J\right) \\
         &= \beta_{i,j}^Q\left(I\right) + \beta_{i,j}^Q\left(J\right) + \beta_{i-1,j}^Q\left(zJ\right) \\
         &= \beta_{i,j}^Q\left(I\right) + \beta_{i,j}^Q\left(J\right) + \beta_{i-1,j}^Q\left(I\cap J\right). \qedhere
    \end{align*}
\end{proof}

\begin{remark}\label{rem:encapsulate}
    While we previously addressed the case of trees with diameter less than four, we note that this case is encapsulated by Theorem \ref{thm:diam4res}. Diameter-three trees are those for which, up to reindexing, $y_{i,j}$ for all $i>1$ and $y_{1,1} \neq 0$ (see Equation \ref{Iform}).

    For a diameter two tree, we have $y_{i,j} = 0$ for all $i$  and $j$. This results in $\GG = Q$ and $\GG' = 0$, and thus $\Cone{\Psi} = \FF$. This also holds for a diameter one tree, though this case is rather unexciting since the edge ideal is simply $(x_1z)$.
\end{remark}

We now turn our attention to the dg structure of $\Cone{\Psi}$. Since $\FF$ and $\GG$ are each Taylor resolutions, they are both dg algebras. To capitalize on their dg structures, we first define the following family of linear maps.

\begin{defn}
    Let $f \in \FF_j$ such that either $j > 1$ or $\partial^{\FF}\!(f) = x_i z$ for some $i$. We define $\omega_{f} : \GG \to \GG$~by \[\omega_{f}(g_1) := \begin{cases} -x_{i}g_{1}, & \partial^{\FF}(f)=x_iz,\\ 0, & |f|>1, \end{cases}\] where $|f|$ is the homological degree of $f$. 
    
    We extend $\omega_f$ to a linear map for any $f \in \FF_{>0}$ by defining \[\omega_{u f_1 + v f_2}(g) := u \omega_{f_1}(g) + v \omega_{f_2}(g)\] for all $g \in \GG$. We define $\omega_1(g) = 0$ for all $g \in \GG$.
\end{defn}

We use the map $\omega_f$ in the definition of a product structure on $\Cone{\Psi}$.

\begin{defn}\label{defn:treeProd}
    We define the following ten products on $\Cone{\Psi}$:
    \setcounter{equation}{-1}
    \begin{align}
        (q,0,0)\cdot (f_1,g_1,g_2) &:= q (f_1,g_1,g_2) = (qf_1, qg_1, qg_2) \text{ for all } q \in Q, \label{prod:ident} \\
        (f_1,0,0) \cdot (f_2,0,0) &:= (f_1f_2,0,0), \label{prod:ff} \\
        (f_1,0,0) \cdot (0,g_2,0) &:= \left( \frac{1}{z}f_1\Phi(g_2), 0, \omega_{f_1}(g_2)\right), \label{prod:fg1} \\
        (f_1,0,0) \cdot (0,0,g_2) &:= (0,0,0), \label{prod:fg2} \\
        (0,g_1,0) \cdot (0,g_2,0) &:= (0,g_1g_2,0), \label{prod:gg} \\
        (0,g_1,0) \cdot (0,0,g_2) &:= (-1)^{|g_1|}(0,0,g_1g_2), \label{prod:g1g2} \\
        (0,0,g_1) \cdot (0,0,g_2) &:= (0,0,0), \label{prod:shiftg} \\
        (0,g_1,0) \cdot (f_2,0,0) &:= \left( \frac{1}{z}\Phi(g_1)f_2, 0, (-1)^{|g_1|}\omega_{f_2}(g_1)\right), \label{prod:g1f} \\
        (0,0,g_1) \cdot (f_2,0,0) &:= (0,0,0), \label{prod:g2f} \\
        (0,0,g_1) \cdot (0,g_2,0) &:= (0,0, g_1g_2). \label{prod:g2g1} 
    \end{align}
\end{defn}

\begin{remark}
    If $(f,g_1,g_2), (f',g_1',g_2') \in \Cone{\Psi}_{>0}$, then Definition \ref{defn:treeProd} and the linearity of the direct sum combine to show the general product $(f,g_1,g_2)(f',g_1',g_2')$ can be written as
    \[\left(ff' + \frac{1}{z}f\Phi(g_1')+\frac{1}{z}\Phi(g_1)f', g_1g_1', \omega_f(g_1') + (-1)^{|g_1|}g_1g_2' + (-1)^{|g_1|}\omega_{f'}(g_1) + g_2g_1' \right).\] While a single expression initially seems preferable, it becomes cumbersome in our later proofs. As such, our proofs utilize a case-by-case approach to the product structure.
\end{remark}

The next few results and definitions provide the tools needed to show that the product in Definition \ref{defn:treeProd} yields a dg algebra structure on $\Cone{\Psi}$.  The following lemma is meant to assuage any apprehension about the presence of $1/z$ in our product definitions.

\begin{lem}
    If $f \in \FF_i$ and $g \in \GG_j$, then $\frac{1}{z} f \Phi(g) \in \FF_{i+j}$.
\end{lem}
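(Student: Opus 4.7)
The plan is to reduce to basis elements and then exploit the fact that every generator of $I$ is divisible by $z$, which provides the cancellation against the $1/z$. By $Q$-bilinearity, it suffices to treat $f = f_V$ with $V \subseteq G(I)$, $|V| = i$, and $g = g_W$ with $W \subseteq G(J)$, $|W| = j$. Using Proposition \ref{prop:PhiDef}, I would write $\Phi(g_W) = y_W f_{W_z}$; if the multiset $W_z$ contains a repeated element, then $f_{W_z} = 0$ by definition and the claim is trivial, so I will assume that $W_z$ has no repeats and hence that $|W_z| = j$.

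Next I would compute the Taylor product $f_V \cdot f_{W_z}$ using the formula recalled in Example \ref{taylorex}. If $V \cap W_z \neq \emptyset$, the product vanishes and the claim holds trivially. Otherwise, the product equals $\pm \dfrac{m_V m_{W_z}}{m_{V \cup W_z}} f_{V \cup W_z}$, which lives in homological degree $|V \cup W_z| = i + j$ since $V$ and $W_z$ are disjoint.

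The key step is the following elementary observation: since every element of $G(I) = \{zx_1, \ldots, zx_n\}$ is divisible by $z$, one has $m_V = z\,x_V$, $m_{W_z} = z\,x_{W_z}$, and $m_{V \cup W_z} = z\,x_{V \cup W_z}$. Therefore $\dfrac{m_V m_{W_z}}{m_{V \cup W_z}} = z \cdot \dfrac{x_V\, x_{W_z}}{x_{V \cup W_z}}$, and the quotient on the right is a genuine monomial of $Q$ because $x_{V \cup W_z} = \lcm(x_V, x_{W_z})$ divides $x_V\, x_{W_z}$. Multiplying through by the scalar $y_W \in Q$ coming from $\Phi(g_W)$ and then dividing by the explicit factor of $z$ in front yields an honest element of $Q$ times $f_{V \cup W_z} \in \FF_{i+j}$, which is precisely what we want. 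There is no real obstacle here; the only subtlety is tracking the two trivial cases above where either $\Phi(g_W) = 0$ or the Taylor product vanishes so that a priori division by $z$ might look suspicious, but in both cases the output is $0 \in \FF_{i+j}$.
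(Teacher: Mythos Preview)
Your proof is correct and follows essentially the same route as the paper: reduce to basis elements, dispose of the trivial cases where $W_z$ has repeats or $V\cap W_z\neq\emptyset$, and then observe that $m_V m_{W_z}/m_{V\cup W_z}$ carries an explicit factor of $z$ because every generator of $I$ is divisible by $z$. The only cosmetic difference is that the paper notes the remaining quotient $x_V x_{W_z}/x_{V\cup W_z}$ is actually equal to $1$ (since $V\cap W_z=\emptyset$ forces the underlying $x_i$'s to be disjoint), whereas you merely observe it is a monomial---but your weaker observation already suffices.
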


\begin{proof}
    Since $g \in \GG_j$ and $\Phi : \GG \to \FF$ is a chain map, we have $\Phi(g) \in \FF_j$. It then follows that $f\Phi(g) \in \FF_{i+j}$. Thus, we need only address the presence of $1/z$.

    Without loss of generality, suppose $f = f_V$ and $g = g_W$ for some $V \subseteq G(I)$ and $W \subseteq G(J)$. By Proposition \ref{prop:PhiDef}, we have $\Phi(g_W) = y_W f_{W_z}$, where $f_{W_z} = 0$ if $W_z$ has repeated elements. In that case, we have \[\frac{1}{z} f_V \Phi(g_W) = \frac{y_W}{z} f_V \cdot 0 = 0.\] On the other hand, if $W_z$ has no repeated elements, then \[\frac{1}{z} f_V \Phi(g_W) = \frac{y_W}{z} f_V f_{W_z} = \begin{cases} (-1)^{\sigma(V,W_z)} y_W  f_{V \cup W_z}, & V \cap W_z = \emptyset, \\ 0, & V \cap W_z \neq \emptyset. \end{cases}\] That is, we either have $\frac{1}{z} f_V \Phi(g_W) = y_W f_{V \cup W_z}$ or $\frac{1}{z} f_V \Phi(g_W) = 0$. Since the product is well-defined in all cases, we have $ \frac{1}{z} f_V \Phi(g_W) \in \FF_{i+j}$. The result extends to any $f$ and $g$ by the linearity of $\Phi$.
\end{proof}

% Having justified the notation, we set about proving the first of the required properties to establish that Definition \ref{defn:treeProd} yields a dg algebra structure on $\Cone{\Psi}$. To understand the next proof, we note that for $f \in \FF$ and $g \in \GG_{>0}$, we have that $|(f,0,0)| = |f|$, $|(0,g,0)| = |g|$, and $|(0,0,g)| = |g| + 1$.

% \begin{thm}\label{thm:gc}
%     The products in Definition \ref{defn:treeProd} are graded commutative.
% \end{thm}

% \begin{proof}
%     Due to the computational nature of the proof, we relegate the proof to Appendix \ref{append:GC}.
% \end{proof}

% We need to deepen our understanding of $\Phi$ to prove we have a dg algebra. We note that while $\Phi$ is not a dg morphism, it has a predictable interaction with the multiplicative structures of $\FF$ and $\GG$.

To prove the products in Definition \ref{defn:treeProd} satisfy the remaining properties of a dg algebra product, we introduce two lemmas. The first deepens our understanding of $\Phi$ and shows that while it is not a dg morphism, it has a predictable interaction with the multiplicative structures of $\FF$ and $\GG$.

\begin{lem}\label{lem:notDGmorph}
    The chain map $\Phi$ is not a dg morphism, but for all $g_1,g_2 \in \GG$, one has \[z \Phi(g_1g_2) = \Phi(g_1)\Phi(g_2)\] or, equivalently, $\Phi(g_1 g_2) = \frac{1}{z} \Phi(g_1)\Phi(g_2)$.
\end{lem}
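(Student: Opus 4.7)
The non-morphism claim is immediate: $\Phi_0(1) = z$ by definition of $\Phi$, so $\Phi$ fails to preserve the unit of the respective Taylor algebras. Equivalently, the identity we are about to prove forces $(z-1)\Phi(g_1g_2)=0$ whenever $\Phi(g_1g_2) \neq 0$, which already rules out multiplicativity.

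For the main identity $z\Phi(g_1g_2) = \Phi(g_1)\Phi(g_2)$, I would use $Q$-linearity to reduce to basis elements $g_1 = g_V$, $g_2 = g_W$ with $V,W \subseteq G(J)$, and then split into cases. In the three ``degenerate'' cases --- (i) $V \cap W \neq \emptyset$; (ii) $V_z$ or $W_z$ has a repeated entry; or (iii) $V \cap W = \emptyset$ but $V_z$ and $W_z$ meet as sets --- both sides vanish. Case (iii) is the only subtle one: it occurs precisely when there exist $x_iy_{i,j_1} \in V$ and $x_iy_{i,j_2} \in W$ with $j_1 \neq j_2$, and the key observation is that this same condition forces $(V \cup W)_z$ to repeat $zx_i$, killing $\Phi(g_{V \cup W})$ on the left, while $f_{V_z}f_{W_z}$ already vanishes on the right from the Taylor product formula. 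In (ii), the same monotonicity ($V_z \subseteq (V \cup W)_z$ as multisets) propagates the repeat to $(V \cup W)_z$, so $\Phi(g_{V\cup W}) = 0$.

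The one substantive case is $V \cap W = \emptyset$ with $V_z, W_z$ disjoint and repeat-free. Here Lemma \ref{lem:zSigma} gives $\sigma(V,W) = \sigma(V_z, W_z)$, so the signs match on both sides, and only the coefficients need to be compared. On the left, Gemeda's formula in $\GG$ followed by $\Phi$ produces a factor $\frac{m_V m_W}{m_{V \cup W}} \cdot y_{V \cup W}$; on the right, applying $\Phi$ first and then Gemeda's formula in $\FF$ produces $y_V y_W \cdot \frac{m_{V_z} m_{W_z}}{m_{V_z \cup W_z}}$. Using $m_U = x_U y_U$ for $U \subseteq G(J)$, together with $m_{U_z} = zx_U$ and $x_{U_z} = x_U$, both sides collapse to $(-1)^{\sigma(V,W)} \frac{z x_V x_W y_V y_W}{x_{V \cup W}}\, f_{V_z \cup W_z}$, and the identity follows. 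The main obstacle is mostly bookkeeping: tracking how repeats propagate from $V_z$ or $W_z$ up to $(V \cup W)_z$ in the degenerate cases, and isolating the single surplus factor of $z$ arising from $m_{V_z}m_{W_z}/m_{V_z \cup W_z} = z \cdot x_V x_W / x_{V \cup W}$ in the main case --- this is exactly where the asymmetry between $\Phi$ being a chain map and being a dg morphism materializes.
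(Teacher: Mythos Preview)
Your proof is correct and follows essentially the same approach as the paper: reduce by $Q$-linearity to basis elements, dispose of the degenerate cases by tracking how repeats or intersections in $V_z,W_z$ force $(V\cup W)_z$ to have repeats (or $g_Vg_W$ to vanish), and in the nondegenerate case invoke Lemma~\ref{lem:zSigma} for the sign and compare coefficients directly. Your case organization (by structural obstruction rather than by which side vanishes first) differs cosmetically from the paper's, and you add an explicit justification for why $\Phi$ fails to be a dg morphism, but the substance is the same.
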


\begin{proof}

Since $\Phi$ is $Q$-linear, it suffices to show the result for basis elements of $\GG$. For that reason, we consider nonempty $V,W \subseteq G(J)$ and prove that $z\Phi(g_V g_W) = \Phi(g_V) \Phi(g_W)$. We start by assuming $\Phi(g_V) \Phi(g_W) = 0$ and consider three cases.

For the first case, suppose $\Phi(g_V) = 0$. Recall from Proposition \ref{prop:PhiDef} that this means that there exists some index $i$ such that $zx_i$ occurs more than once in the multiset $V_z$. That is, there exist two distinct indices $j$ and $\ell$ such that $x_iy_{i,j}, x_i y_{i,\ell} \in V \subseteq V \cup W$. This tells us that $zx_i$ occurs at least twice in the multiset $(V \cup W)_z$ and thus \[\Phi(g_V g_W) \in \Span_Q (f_{(V \cup W)_z}) = \{ 0 \}.\] That is, if $\Phi(g_V) = 0$ then $\Phi(g_V g_W) = 0$.

For the second case, suppose that $\Phi(g_W) = 0$. Using the same argument as the first case, we find that $\Phi(g_V g_W) = 0$.

For the third and final case of vanishing, suppose that $\Phi(g_V) \neq 0 \neq \Phi(g_W)$ but \[\Phi(g_V) \Phi(g_W) = \left(y_V f_{V_z}\right) \left(y_W f_{W_z}\right) = y_V y_W f_{V_z} f_{W_z} = 0.\] Since $\Phi(g_V) \neq 0 \neq \Phi(g_W)$, the multisets $V_z$ and $W_z$ do not contain repeated elements and so can be viewed as sets instead of multisets. Moreover, since $\FF$ is a Taylor resolution, the vanishing of this product means that $V_z \cap W_z \neq \emptyset$. That is, there exists some $i$ such that $zx_i \in V_z \cap W_z$. This tells us that there must exist indices $j$ and $\ell$ such that $x_i y_{i,j} \in V$ and $x_i y_{i,\ell} \in W$. If $j = \ell$, then $x_i y_{i,j} \in V \cap W$ and thus $g_V g_W = 0 \in \ker \Phi$. If $j \neq \ell$, then $zx_i$ occurs twice in the multiset $(V \cup W)_z$ and thus $g_V g_W \in \ker \Phi$.

We see that whenever $\Phi(g_V)\Phi(g_W) = 0$, we also have $\Phi(g_V g_W) = 0$. To complete the proof, we now assume that $\Phi(g_V)\Phi(g_W) \neq 0$. From our previous three cases, this means that neither $V_z$ nor $W_z$ contain repeated elements and so can be viewed as sets, as opposed to multisets. Furthermore, $\Phi(g_V)\Phi(g_W) \neq 0$ means that $V_z \cap W_z = \emptyset$, and so Lemma \ref{lem:zSigma} tells us that $\sigma(V,W) = \sigma(V_z,W_z)$.

We note that $V_z \cap W_z$ gives the repeated elements in $(V \cup W)_z$. Since $V_z \cap W_z = \emptyset$, there are no repeated elements in $(V \cup W)_z$ and thus $(V \cup W)_z = V_z \cup W_z$. We are now able to observe the following:
\begin{align*}
    \Phi(g_V) \Phi(g_W) &= y_V y_W f_{V_z} f_{W_z} \\
     &= (-1)^{\sigma(V_z,W_z)} y_V y_W \frac{m_{V_z} m_{W_z}}{m_{V_z \cup W_z}} f_{V_z \cup W_z} \\
     &= (-1)^{\sigma(V,W)} y_Vy_W \frac{zx_V zx_W}{z x_{V \cup W}} f_{(V \cup W)_z} \\
     &= (-1)^{\sigma(V,W)} z\frac{x_Vy_Vx_Wy_W}{x_{V \cup W} y_{V \cup W}} y_{V \cup W} f_{(V \cup W)_z} \\
     &= (-1)^{\sigma(V,W)} z\frac{m_Vm_W}{m_{V \cup W}} y_{V \cup W} f_{(V \cup W)_z} \\
     &= (-1)^{\sigma(V,W)} z\frac{m_Vm_W}{m_{V \cup W}} \Phi(g_{V \cup W}) \\
     &= z\Phi(g_Vg_W).
\end{align*}
Rearranging, one obtains the equivalent form $\Phi(g_V g_W) = \frac{1}{z} \Phi(g_V)\Phi(g_W)$.
\end{proof}

% Rephrasing the result of Lemma \ref{lem:notDGmorph} as $\Phi(g_1 g_2) = \frac{1}{z} \Phi(g_1)\Phi(g_2)$, we obtain a necessary tool to prove the following.

% \begin{thm}\label{thm:assoc}
%     The product in Definition \ref{defn:treeProd} 
% \end{thm}

% \begin{proof}
%     Associativity can be proved using $4^3 = 64$ cases. Of the 64 cases, 37 include at least one term of the form $(q,0,0)$ where $q \in \FF_0 = Q$. Those 37 cases automatically hold since multiplication by $(q,0,0)$ is the same as multiplication by $q$.

%     Of the remaining cases, 17 of them vanish and we show the final 10 in Appendix \ref{append:assoc}.
% \end{proof}

Our next lemma is necessary to understand how $\partial^{\Cone{\Psi}}$ interacts with the product structure.

\begin{lem}\label{lem:partialMult}
    Suppose $f \in \FF$ and $g \in \GG$. If $|f| > 0$, then \[\left(\partial^{\FF}(f),0,0\right) \cdot \left(0,g,0\right) = \begin{cases} \left(\frac{1}{z} \partial^{\FF}(f) \Phi(g),0,0 \right) & |f| > 1, \\ (0,\partial^{\FF}(f)g, 0) & |f| = 1 \end{cases}.\]
\end{lem}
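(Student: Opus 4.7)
The plan is to reduce by $Q$-bilinearity of the product on $\Cone{\Psi}$ and $Q$-linearity of $\partial^{\FF}$ to the case where $f = f_V$ is a standard Taylor basis element of $\FF_{|V|}$, and then to split the argument according to the degree $|f|$. Once $f$ is a basis element, the expressions $\partial^{\FF}(f)$, $\Phi(g)$, and $\omega$ applied to the terms of $\partial^{\FF}(f)$ are all explicit, and the identity becomes a matter of matching coordinates in $\FF \oplus \GG \oplus \GG$.

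For the base case $|f|=1$, I would write $f = f_{\{zx_i\}}$, observe that $\partial^{\FF}(f) = zx_i \in Q = \FF_0$, and conclude directly from product \eqref{prod:ident} that $(zx_i,0,0)\cdot(0,g,0) = (0, zx_i g, 0) = (0,\partial^{\FF}(f)g,0)$. For $|f|>1$, the element $\partial^{\FF}(f)$ lies in $\FF_{|f|-1}$ with $|f|-1\geq 1$, so product \eqref{prod:fg1} applies termwise and, using linearity of $\omega$ in its subscript, yields
\[
(\partial^{\FF}(f),0,0)\cdot(0,g,0) = \left(\tfrac{1}{z}\,\partial^{\FF}(f)\,\Phi(g),\ 0,\ \omega_{\partial^{\FF}(f)}(g)\right).
\]
The first two coordinates match the right-hand side of the claim on the nose, so the entire argument reduces to showing that the third coordinate $\omega_{\partial^{\FF}(f)}(g)$ vanishes.

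The case $|f|>2$ will be trivial: every basis vector appearing in $\partial^{\FF}(f)$ lies in $\FF_{|f|-1}$ with $|f|-1>1$, and $\omega$ is declared to be $0$ on every such basis vector. The hard part, and the only nontrivial step in the plan, is $|f|=2$, where one must invoke the explicit form of the Taylor differential. Writing $f = f_{\{zx_i,zx_j\}}$ with $zx_i<zx_j$, one has $\partial^{\FF}(f) = x_i f_{\{zx_j\}} - x_j f_{\{zx_i\}}$, and the definition $\omega_{f_{\{zx_k\}}}(g) = -x_k g$ yields the cancellation
\[
\omega_{\partial^{\FF}(f)}(g) = x_i(-x_j g) - x_j(-x_i g) = 0.
\]
The fact that this cancels at all is what vindicates the uniform minus sign convention in the definition of $\omega_{f_{\{zx_k\}}}$: it is designed to play against the alternating signs of the Taylor differential so that $(\partial^{\FF}(f),0,0)\cdot(0,g,0)$ stays inside the $\FF$-component of $\Cone{\Psi}$ whenever $|f|>1$.
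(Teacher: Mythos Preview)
Your proposal is correct and follows essentially the same approach as the paper: both reduce by linearity to a Taylor basis element $f_V$, dispatch $|V|=1$ via scalar multiplication, and for $|V|>1$ verify that the third coordinate $\omega_{\partial^{\FF}(f_V)}(g)$ vanishes, with the only nontrivial cancellation occurring at $|V|=2$ exactly as you compute. Your presentation is marginally more streamlined in that you first isolate the general formula $(\tfrac{1}{z}\partial^{\FF}(f)\Phi(g),0,\omega_{\partial^{\FF}(f)}(g))$ and then argue separately that the last coordinate is zero, whereas the paper computes each case from scratch; but the underlying argument is the same.
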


\begin{proof}
    Since $\partial^{\FF}$ is $Q$-linear, it suffices to consider $f_V \in \FF$ where $V \subseteq G(I)$ with $V \neq \emptyset$. In the case of $|V| = 1$, the result follows from multiplication by scalars.
    
    Next, suppose $|V| = 2$. Without loss of generality, we may assume $V = \{x_1z, x_2z\}$. In this case, we observe the following:
    \begin{align*}
        \left(\partial^{\FF}(f_V),0,0\right) \cdot \left(0,g,0\right) &= x_2\left(f_{\{x_1z\}},0,0\right) \cdot \left(0,g,0\right) - x_1 \left(f_{\{x_2z\}},0,0\right) \cdot \left(0,g,0\right) \\
        &= x_2 \left(\frac{1}{z} f_{\{x_1z\}} \Phi(g), 0, -x_1g\right) - x_1 \left(\frac{1}{z} f_{\{x_2z\}} \Phi(g),0,-x_2g\right) \\
        &= \left(\frac{1}{z}\left(x_2 f_{\{x_1z\}} - x_1 f_{\{x_2z\}}\right)\Phi(g), 0, -x_1x_2 g + x_1x_2 g\right) \\
        &= \left(\frac{1}{z} \partial^{\FF}(f_V) \Phi(g),0,0 \right).
    \end{align*}
    Now suppose $|V| > 2$, and compute the following:
    \begin{align*}
        \left(\partial^{\FF}(f_V),0,0\right) \cdot \left(0,g,0\right) &= \sum_{v \in V} (-1)^{\sigma(v,V)} \frac{m_V}{m_{V \backslash v}}\left(f_{V\backslash v},0,0\right) \cdot \left(0,g,0\right) \\
         &= \sum_{v \in V} (-1)^{\sigma(v,V)} \frac{m_V}{m_{V \backslash v}} \left(\frac{1}{z} f_{V \backslash v} \Phi(g), 0, 0\right) \\
         &= \left(\frac{1}{z} \left(\sum_{v \in V} (-1)^{\sigma(v,V)} \frac{m_V}{m_{V \backslash v}} f_{V \backslash v} \right) \Phi(g), 0, 0\right) \\
         &= \left(\frac{1}{z} \partial^{\FF}(f_V) \Phi(g),0,0 \right).
    \end{align*}
    Since $|V| \geq 1$ if and only if $V \neq \emptyset$, the proof is complete.
\end{proof}

Using Lemmas \ref{lem:notDGmorph} and \ref{lem:partialMult}, we can now prove the following theorem.

\begin{thm}\label{thm:ConeProd}
    The product in Definition \ref{defn:treeProd} is graded commutative, associative, and satisfies the Leibniz rule. Consequently, the product defines a differential graded algebra structure on $\Cone{\Psi}$.
\end{thm}

\begin{proof}
    %Due to the computational nature of the proof of each property, we relegate the proof of graded commutativity to Appendix \ref{append:GC}, the proof of associativity to Appendix \ref{append:assoc}, and finally the proof that the product satisfies the Leibniz Rule to Appendix \ref{append:LR}.
    
    Due to the computational nature of the proof that the product is graded commutative, we relegate it to Appendix \ref{append:GC}.

    Associativity can be proved using $4^3 = 64$ cases. Of the 64 cases, 37 include at least one term of the form $(q,0,0)$ where $q \in \FF_0 = Q$. Those 37 cases automatically hold since multiplication by $(q,0,0)$ is the same as multiplication by $q$. Of the remaining cases, 17 of them vanish and we show the final 10 in Appendix \ref{append:assoc}.

    The Leibniz rule can be shown by investigating $4^2 = 16$ cases. Of the 16 cases, 7 include at least one term of the form $(q,0,0)$ where $q \in \FF_0 = Q$. Those 7 cases automatically hold since multiplication by $(q,0,0)$ is the same as multiplication by $q$. Due to the computational nature of the proof, we relegate the remaining 9 cases to Appendix \ref{append:LR}.
    
\end{proof}

We now apply Theorem \ref{thm:ConeProd} to the case of edge ideals of trees.

\begin{corollary}\label{cor:treeProd}
    The edge ideal of any tree of diameter four is minimally resolved by a differential graded algebra.
\end{corollary}

\begin{proof}
    Theorem \ref{thm:diam4res} tells us that the edge ideal of any tree of diameter four is minimally resolved by $\Cone{\Psi}$ while Theorem \ref{thm:ConeProd} tells us the resolution has the structure of a dg algebra.
\end{proof}

% We now have the tools needed to prove that $\Cone{\Psi}$ admits the structure of a dg algebra.

% \begin{corollary}\label{cor:treeProd}
%     The product structure given in Definition \ref{defn:treeProd} makes $\Cone{\Psi}$ a dg algebra. Consequently, the edge ideal of any tree of diameter four is minimally resolved by a dg algebra.
% \end{corollary}

% \begin{proof}
%     The product is graded commutative by Theorem \ref{thm:gc}. It is associative by Theorem \ref{thm:assoc}. The product satisfies the Leibniz Rule by Theorem \ref{thm:lr}. Thus, $\Cone{\Psi}$ is a dg algebra. Lastly, it is a minimal dg algebra resolution of $Q/I(\Gamma)$ over $Q$ by Theorem \ref{thm:diam4res}.
% \end{proof}

\section{The Pruning Process}\label{pruningsection}

In this section, we describe a process of ``pruning" the minimal free resolution $\mathbb{F}$ of $Q/I$, for any monomial ideal $I$, to the minimal free resolution of a smaller monomial ideal over a smaller polynomial ring (Definition \ref{pruningprocess}).  We show that when $I$ is squarefree, a dg algebra structure on $\mathbb{F}$ descends to a dg algebra structure on the ``pruned" resolution (Theorem \ref{pruningisdg}, Corollary \ref{iteratepruning}).  In the language of combinatorics, dg-sensitive pruning implies that if the minimal free resolution of the quotient of a polynomial ring by the facet ideal of a simplicial complex $\Delta$ admits the structure of a dg algebra, then so does the minimal free resolution of the quotient by the facet ideal of each facet-induced subcomplex of $\Delta$ over the smaller polynomial ring (Corollary \ref{simplex}).  This allows us to conclude, for example, that induced subgraphs of dg graphs are dg (Corollary \ref{main}), which is used to complete the classifications of dg trees and cycles in Section \ref{class}. 

In the case of a squarefree monomial ideal $I$, Katth\"an provides the following structure theorem for the minimal free resolution $\mathbb{F}$ of $Q/I$ when $\mathbb{F}$ admits the structure of a dg algebra \cite[Theorem 3.6]{Kat}.

\begin{thm}[Katth\"an, 2019]\label{structurethm}
Let $I$ be a squarefree monomial ideal, and suppose that the minimal $Q$-free resolution $\mathbb{F}$ of $Q/I$ admits the structure of a differential graded algebra.  Then, there is an isomorphism of differential graded algebras $\mathbb{F} \cong \mathbb{T}/\mathbb{J}$, where $\mathbb{T}$ is the Taylor resolution of $Q/I$ and $\mathbb{J}$ is a dg ideal of $\mathbb{T}$.  
\end{thm}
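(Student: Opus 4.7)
The plan is to build a surjective morphism of dg algebras $\phi : \mathbb{T} \twoheadrightarrow \mathbb{F}$ lifting $\mathrm{id}_{Q/I}$. Granted such $\phi$, the kernel $\mathbb{J} := \ker \phi$ is closed under $\partial^{\mathbb{T}}$ (as the kernel of a chain map) and under multiplication by $\mathbb{T}$ (as the kernel of a $Q$-algebra map), hence is a dg ideal of $\mathbb{T}$; the first isomorphism theorem then yields the desired dg algebra isomorphism $\mathbb{T}/\mathbb{J} \cong \mathbb{F}$.

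Existence of a chain map $\phi : \mathbb{T} \to \mathbb{F}$ lifting $\mathrm{id}_{Q/I}$ is a standard comparison argument since $\mathbb{T}$ is a free resolution, and $\phi$ can be chosen $\mathbb{Z}^n$-multigraded with respect to the multigrading inherited from the squarefree monomial ideal $I$. Moreover, surjectivity of $\phi$ is automatic from the minimality of $\mathbb{F}$: any free resolution of $Q/I$ splits as $\mathbb{T} \cong \mathbb{F} \oplus \mathbb{N}$ with $\mathbb{N}$ a contractible complex, and one may take $\phi$ to be the projection onto the minimal summand.

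The main obstacle is upgrading $\phi$ from a chain map to a morphism of dg algebras; I would proceed via an obstruction-theoretic argument, adjusting $\phi$ inductively in the homological grading. For basis elements $a, b \in \mathbb{T}$ already handled in lower homological degrees, the discrepancy
\[
\phi(ab) - \phi(a)\phi(b) \in \mathbb{F}
\]
is a $\partial^{\mathbb{F}}$-cycle, as one verifies by combining $\phi \circ \partial^{\mathbb{T}} = \partial^{\mathbb{F}} \circ \phi$ with the Leibniz rule on both $\mathbb{T}$ and $\mathbb{F}$. Since $\mathbb{F}$ is acyclic in positive homological degrees, this cycle is a boundary, so we may modify $\phi$ on an appropriate Taylor basis element so as to absorb the discrepancy into $\partial^{\mathbb{F}}$. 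The $\mathbb{Z}^n$-multigrading pins each obstruction to a unique squarefree multidegree, controlling the interaction between corrections and permitting them to be made compatibly with graded commutativity; squarefreeness of $I$ is essential here, since it rules out multidegree collisions that could otherwise force competing adjustments on a single basis element. Iterating through all homological degrees produces a dg algebra morphism $\phi$, and the preceding argument then yields the dg ideal $\mathbb{J} := \ker \phi$ together with the dg algebra isomorphism $\mathbb{T}/\mathbb{J} \cong \mathbb{F}$.
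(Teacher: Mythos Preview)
The paper does not prove this statement; it is quoted from K\"atth\"an's paper \cite[Theorem 3.6]{Kat} and used as a black box, so there is no in-paper proof to compare against. Your high-level strategy---build a surjective dg algebra morphism $\phi:\mathbb{T}\twoheadrightarrow\mathbb{F}$ lifting $\mathrm{id}_{Q/I}$ and take $\mathbb{J}=\ker\phi$---is indeed the right framework, and your remarks on existence of a multigraded comparison map and on surjectivity via minimality of $\mathbb{F}$ are fine.

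The gap is in the obstruction step. For Taylor basis elements one has $e_V\cdot e_W=\pm\frac{m_Vm_W}{m_{V\cup W}}\,e_{V\cup W}$, so the discrepancy $\phi(e_Ve_W)-\phi(e_V)\phi(e_W)$ is $\pm\frac{m_Vm_W}{m_{V\cup W}}\phi(e_{V\cup W})-\phi(e_V)\phi(e_W)$, and adjusting $\phi(e_{V\cup W})$ by $\delta$ changes this by $\frac{m_Vm_W}{m_{V\cup W}}\,\delta$. Hence you need the discrepancy to be \emph{divisible} by the monomial $\frac{m_Vm_W}{m_{V\cup W}}$; this is exactly where squarefreeness is used (every basis element of $\mathbb{F}$ sits in a squarefree multidegree, so any element of $\mathbb{F}$ in multidegree $m_Vm_W$ is automatically $\frac{m_Vm_W}{m_{V\cup W}}$ times an element in the squarefree multidegree $m_{V\cup W}$), but your sketch does not articulate this mechanism. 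More seriously, a single $e_U$ participates in many factorizations $e_U=\pm\frac{m_U}{m_Vm_W}\,e_Ve_W$, one for each partition $U=V\sqcup W$, and these impose several constraints on $\phi(e_U)$ simultaneously. Your phrase ``controlling the interaction between corrections'' does not explain why the corresponding corrections $\Delta_{V,W}$ agree across all such partitions; establishing this compatibility (using the inductive multiplicativity hypothesis together with associativity and graded commutativity in $\mathbb{F}$) is the genuine work in K\"atth\"an's argument, and it is absent here.
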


Theorem \ref{structurethm} plays a key role in showing that the ``pruning process" (Definition \ref{pruningprocess}) is ``dg-sensitive."  We now introduce a dg ideal $\mathbb{I}$ of the Taylor resolution which combines with the dg ideal $\mathbb{J}$ from Theorem \ref{structurethm} to later formalize the notion of ``pruning."

\begin{lem}\label{principaldgideal}%GENERAL SQFREE VERSION
   Let $I$ be a monomial ideal of a polynomial ring $Q$ in which $x_k$ is a variable.  Let $\mathbb{T}$ be the Taylor resolution of $Q/I$.  The subcomplex $\mathbb{I}$ of $\mathbb{T}$ given by
    \[
    \mathbb{I}:=\bigoplus 0 \to Qe_V \to Q\partial(e_V) \to 0,
    \]
    where the direct sum is taken over all $V \subseteq G(I)$ such that $V$ contains a monomial divisible by $x_k$, is a dg ideal of $\mathbb{T}$.  \iffalse Furthermore, the subcomplex $\mathbb{I}$ of $\mathbb{T}$ given by
    \[
    \mathbb{I}:=\bigoplus_{e_V \in \mathbb{I}_k \textup{ for some } k} 0 \to Qe_V \to Q\partial(e_V) \to 0
    \]
    is a dg ideal of $\mathbb{T}$. \fi
\end{lem}

\begin{proof}
    For a basis element $e_U$ of $\mathbb{T}$, we assume that $e_Ue_V \neq 0$ and notice that $e_Ue_V$ is a multiple of $e_{U \cup V}$.  Since $U \cup V$ contains a monomial divisible by $x_k$, we have that $e_{U \cup V} \in \mathbb{I}$.  So, $e_Ue_V \in \mathbb{I}$.  Using the Leibniz rule,
    \[
    \partial(e_Ue_V)=\partial(e_U)e_V+(-1)^{|U|}e_U\partial(e_V).
    \]
    Since $e_Ue_V$ is in $\mathbb{I}$, so is $\partial(e_Ue_V)$.  By an argument similar to the above, $\partial(e_U)e_V \in \mathbb{I}$.  Thus, $e_U\partial(e_V)$ is in $\mathbb{I}$, and we have that $\mathbb{I}$ is a dg ideal of $\mathbb{T}$. 

    %Considering supersets as above, it is clear that $\mathbb{I}$ is a dg ideal of $\mathbb{T}$.  
\end{proof}

The image of a dg ideal of $\mathbb{T}$ in a quotient of $\mathbb{T}$ by a dg ideal is again a dg ideal.  

\begin{lem}\label{dgprojection}
    Let $\mathbb{J}$ be a dg ideal of $\mathbb{T}$.  Set $\mathbb{F}=\mathbb{T}/\mathbb{J}$, and let $\pi:\mathbb{T} \to \mathbb{F}$ be the quotient map.  Then, the subcomplex $\pi(\mathbb{I})$ of $\mathbb{F}$ is a dg ideal of $\mathbb{F}$, where $\mathbb{I}$ is the dg ideal of $\mathbb{T}$ from Lemma \ref{principaldgideal}.
\end{lem}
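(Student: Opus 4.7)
The plan is to exploit the universal property of quotients by dg ideals: since $\mathbb{J}$ is a dg ideal of the dg algebra $\mathbb{T}$, the quotient $\mathbb{F} = \mathbb{T}/\mathbb{J}$ inherits a well-defined dg algebra structure, and the projection $\pi\colon \mathbb{T}\to\mathbb{F}$ is both a chain map and a homomorphism of graded algebras. So every multiplicative fact I need about $\mathbb{F}$ I can check on representatives in $\mathbb{T}$ and then push forward via $\pi$.

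First I would verify that $\pi(\mathbb{I})$ is actually a subcomplex of $\mathbb{F}$, i.e., that it is closed under $\partial^{\mathbb{F}}$. This is immediate: for $b \in \mathbb{I}$ we have $\partial^{\mathbb{F}}(\pi(b)) = \pi(\partial^{\mathbb{T}}(b))$, and $\partial^{\mathbb{T}}(b) \in \mathbb{I}$ since $\mathbb{I}$ is itself a subcomplex (indeed a dg ideal) of $\mathbb{T}$ by Lemma \ref{principaldgideal}. Therefore $\partial^{\mathbb{F}}(\pi(b)) \in \pi(\mathbb{I})$.

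Next, to show the dg ideal condition $\mathbb{F}\cdot \pi(\mathbb{I}) \subseteq \pi(\mathbb{I})$, I would take an arbitrary element $\bar{a}\in \mathbb{F}$ and $\pi(b)\in \pi(\mathbb{I})$ with $b\in\mathbb{I}$. Choose any lift $a\in\mathbb{T}$ with $\pi(a) = \bar{a}$. Because $\pi$ respects the multiplication on $\mathbb{F}$ (that multiplication being defined precisely so that $\pi$ is a map of dg algebras), we have
\[
\bar{a}\cdot \pi(b) \;=\; \pi(a)\cdot\pi(b) \;=\; \pi(a\cdot b).
\]
Since $\mathbb{I}$ is a dg ideal of $\mathbb{T}$, the product $a\cdot b$ lies in $\mathbb{I}$, so $\pi(a\cdot b)\in \pi(\mathbb{I})$. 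This gives $\bar{a}\cdot \pi(b)\in\pi(\mathbb{I})$, as needed. By graded commutativity on $\mathbb{F}$, the two-sided condition holds as well.

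There is really no main obstacle here; the argument is a formal consequence of $\pi$ being a morphism of dg algebras and $\mathbb{I}$ being a dg ideal upstairs. The only subtle point to flag is the well-definedness of the product on $\mathbb{F}$ (so that the choice of lift $a$ does not matter), which is exactly what the hypothesis that $\mathbb{J}$ is a dg ideal of $\mathbb{T}$ provides; any two lifts differ by an element of $\mathbb{J}$, and multiplying by such an element lands in $\mathbb{J}$ again, so the product $\pi(a\cdot b)$ does not depend on the choice of $a$.
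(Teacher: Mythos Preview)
Your proof is correct and follows essentially the same approach as the paper: both arguments hinge on the fact that $\pi$ is a surjective morphism of dg algebras (since $\mathbb{J}$ is a dg ideal), so the dg ideal property of $\mathbb{I}$ in $\mathbb{T}$ pushes forward to $\pi(\mathbb{I})$ in $\mathbb{F}$. The paper compresses this into the single chain $\pi(\mathbb{I}) = \pi(\mathbb{I}\,\mathbb{T}) = \pi(\mathbb{I})\,\pi(\mathbb{T}) = \pi(\mathbb{I})\,\mathbb{F}$, while you unpack the same content by choosing lifts; both are the same argument.
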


\begin{proof}
    Since $\mathbb{J}$ is a dg ideal of $\mathbb{T}$, $\pi$ is a dg morphism. Since $\mathbb{I}$ is a dg ideal of $\mathbb{T}$ and $\pi$ is a surjective dg morphism, we have \[\pi(\mathbb{I}) = \pi(\mathbb{I} \mathbb{T}) = \pi(\mathbb{I}) \pi(\mathbb{T}) = \pi(\mathbb{I}) \FF.\] Thus, $\pi(\mathbb{I})$ is a dg ideal of $\FF$.
\end{proof}

We now describe a process, the ``pruning process," which we use to show that simplicial complexes with non-dg facet-induced subcomplexes cannot themselves be dg.  The following definition/algorithm comes from \cite{boocher}.

\begin{defn}[Boocher, 2012]\label{pruningprocess}
    Let $\mathbb{F}$ be a complex of free $Q$-modules, where $Q$ is a polynomial ring, with choice of bases so that each differential is a matrix $A_i$.  Let $Z$ be a subset of the variables of $Q$.  The \textit{pruning of $\mathbb{F}$ with respect to $Z$} is the complex $P(\mathbb{F},Z)$ of $Q/\la Z\ra$-modules obtained from $\mathbb{F}$ by the following algorithm:
    \begin{enumerate}
        \item Let $i=1$.
        \item For $i \leq \max\{j \mid \mathbb{F}_j \neq 0\}$, do:
        \begin{enumerate}
            \item In the matrix $A_i$, set all variables in $Z$ equal to zero.  Set $A_i$ equal to this new matrix, and set $U$ equal to the set indexing which columns of $A_i$ are identically zero.
            \item Replace $\{A_{i+1},\mathbb{F}_i,A_i\}$ with new maps and modules obtained by deleting rows, basis elements, and columns, respectively, corresponding to $U$.  
        \end{enumerate}
        \item Let $i=i+1$.  
    \end{enumerate}
\end{defn}

As an example, we prune the minimal $Q$-free resolution of $Q/I(L(1,1,1))$ to the minimal $Q/(y_1)$-free resolution of $\dfrac{Q/(y_1)}{I(L(1,1,0))}$.  

\begin{ex}\label{pruningex}
    Let $Q=\Bbbk[x,y,x_1,y_1,z_1]$.  The minimal $Q$-free resolution $\mathbb{F}$ of $Q/I(L(1,1,1))$ is
    \[
    0 \xrightarrow{} Q^2 \xrightarrow{\left[\begin{smallmatrix}
        y_1&0\\0&x_1\\0&-z_1\\-z_1&0\\x&0\\0&y
    \end{smallmatrix}\right]} Q^6 \xrightarrow{\left[\begin{smallmatrix}
        -z_1&-z_1&-x_1&-y_1&0&0\\x&0&0&0&-y_1&0\\0&y&0&0&0&-x_1\\0&0&y&0&0&z_1\\0&0&0&x&z_1&0
    \end{smallmatrix}\right]} Q^5 \xrightarrow{\begin{bmatrix}
        xy&yz_1&xz_1&xx_1&yy_1
    \end{bmatrix}} Q \xrightarrow{} 0.
    \]
    We prune by setting $y_1$ equal to zero.  The first loop of the pruning process deletes the fifth column of the first differential and thus the fifth row of the second differential, yielding the following sequence:
    \[
    0 \xrightarrow{} Q^2 \xrightarrow{\begin{bmatrix}
        y_1&0\\0&x_1\\0&-z_1\\-z_1&0\\x&0\\0&y
    \end{bmatrix}} Q^6 \xrightarrow{\begin{bmatrix}
        -z_1&-z_1&-x_1&-y_1&0&0\\x&0&0&0&-y_1&0\\0&y&0&0&0&-x_1\\0&0&y&0&0&z_1
    \end{bmatrix}} Q^4 \xrightarrow{\begin{bmatrix}
        xy&yz_1&xz_1&xx_1
    \end{bmatrix}} Q \xrightarrow{} 0.
    \]
    The second loop of the pruning process deletes the fourth and fifth columns of the second differential and thus the fourth and fifth rows of the third differential:
    \[
    0 \xrightarrow{} Q^2 \xrightarrow{\begin{bmatrix}
        y_1&0\\0&x_1\\0&-z_1\\0&y
    \end{bmatrix}} Q^4 \xrightarrow{\begin{bmatrix}
        -z_1&-z_1&-x_1&0\\x&0&0&0\\0&y&0&-x_1\\0&0&y&z_1
    \end{bmatrix}} Q^4 \xrightarrow{\begin{bmatrix}
        xy&yz_1&xz_1&xx_1
    \end{bmatrix}} Q \xrightarrow{} 0.
    \]
    Finally, the pruning process deletes the first column of the third differential:
    \[
    0 \xrightarrow{} Q \xrightarrow{\begin{bmatrix}
        0\\x_1\\-z_1\\y
    \end{bmatrix}} Q^4 \xrightarrow{\begin{bmatrix}
        -z_1&-z_1&-x_1&0\\x&0&0&0\\0&y&0&-x_1\\0&0&y&z_1
    \end{bmatrix}} Q^4 \xrightarrow{\begin{bmatrix}
        xy&yz_1&xz_1&xx_1
    \end{bmatrix}} Q \xrightarrow{} 0.
    \]
    Upon replacing $Q$ with $Q/(y_1)$, the resulting sequence $P(\mathbb{F},\{y_1\})$ is the minimal $Q/(y_1)$-free resolution of $\dfrac{Q/(y_1)}{I(L(1,1,0))}$:
    \[
    0 \xrightarrow{} Q/(y_1) \xrightarrow{\begin{bmatrix}
        0\\x_1\\-z_1\\y
    \end{bmatrix}} Q/(y_1)^4 \xrightarrow{\begin{bmatrix}
        -z_1&-z_1&-x_1&0\\x&0&0&0\\0&y&0&-x_1\\0&0&y&z_1
    \end{bmatrix}} Q/(y_1)^4 \xrightarrow{\begin{bmatrix}
        xy&yz_1&xz_1&xx_1
    \end{bmatrix}} Q/(y_1) \xrightarrow{} 0.
    \]
\end{ex}

Inherent in Definition \ref{pruningprocess} is that $P(\mathbb{F},Z)$ is a complex.  If the entries of the differentials of $\mathbb{F}$ are in the maximal ideal of $Q$, then so are the entries of the differentials of $P(\mathbb{F},Z)$.  That is, minimal complexes prune to minimal complexes.  What is surprising is the following result that Example \ref{pruningex} is true more generally \cite[Theorem 2.3]{boocher}.  

\begin{thm}[Boocher, 2012]\label{pruningmons}
    Let $I$ be a monomial ideal of a polynomial ring $Q$, and let $\mathbb{F}$ be the minimal free resolution of $Q/I$ over $Q$.  If $Z$ is a subset of the variables of $Q$, then $P(\mathbb{F},Z)$ is the minimal free resolution of $Q/I \otimes Q/\la Z \ra$ as a $Q/\la Z\ra$-module.  
\end{thm}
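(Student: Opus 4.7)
The plan is to prove the result by reducing to a single variable and then exploiting the multigraded structure of $\mathbb{F}$. First, I would observe that pruning with respect to $Z = \{z_1, \ldots, z_\ell\}$ can be carried out by iteratively pruning one variable at a time, so by induction on $|Z|$ it suffices to treat the case $Z = \{z\}$: setting all variables in $Z$ to zero at once produces the same matrices as setting them to zero successively, and the columns that vanish agree modulo the order of passes.

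The main step is to identify what the pruning process computes. Since $\mathbb{F}$ is the minimal $\mathbb{Z}^n$-graded resolution of $Q/I$, each basis element $e$ of $\mathbb{F}_i$ carries a monomial multidegree $m$, and the coefficient of any basis element $e'$ of $\mathbb{F}_{i-1}$ (multidegree $m'$) in the matrix $A_i$ is either zero or the monomial $m/m'$. Upon setting $z = 0$, such a coefficient survives if and only if $v_z(m) = v_z(m')$, where $v_z$ denotes the $z$-adic valuation. Hence $\mathbb{F}/z\mathbb{F}$ decomposes as a direct sum of subcomplexes $\bigoplus_{k \geq 0} \mathbb{F}^{(k)}$, where $\mathbb{F}^{(k)}$ is spanned by basis elements of multidegrees with $z$-valuation equal to $k$. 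The homology long exact sequence coming from $0 \to \mathbb{F} \xrightarrow{\cdot z} \mathbb{F} \to \mathbb{F}/z\mathbb{F} \to 0$ gives $H_0(\mathbb{F}/z\mathbb{F}) = Q/(I,z)$, $H_i = 0$ for $i \geq 2$, and $H_1 \cong (0 :_{Q/I} z)$; moreover, the connecting homomorphism raises $z$-valuation by one, so $H_1$ is supported entirely in $\bigoplus_{k \geq 1} \mathbb{F}^{(k)}$. Consequently $\mathbb{F}^{(0)}$ is acyclic in positive degrees with $H_0 = Q/(I,z)$, and its entries lie in the maximal ideal of $Q/\la z\ra$, so it is the minimal free resolution of $Q/(I,z)$ over $Q/\la z\ra$.

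It remains to show that the pruning algorithm literally strips away $\bigoplus_{k \geq 1} \mathbb{F}^{(k)}$ and leaves $\mathbb{F}^{(0)}$. I would argue by induction on homological degree that: (i) no basis element of $\mathbb{F}^{(0)}$ is ever deleted, since its column in $\mathbb{F}^{(0)}$ is nonzero by the minimality just established and row deletions coming from $\bigoplus_{k \geq 1} \mathbb{F}^{(k)}$ do not affect it (those entries already vanish in $\mathbb{F}/z\mathbb{F}$ by the valuation analysis); and (ii) every basis element of $\bigoplus_{k \geq 1} \mathbb{F}^{(k)}$ is eventually deleted, since after the lower-degree portion of $\bigoplus_{k \geq 1} \mathbb{F}^{(k)}$ has been removed (inductive hypothesis), any remaining entries in such an element's column have vanished either from the substitution $z = 0$ or from those row deletions. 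The principal obstacle is the claim that $H_1(\mathbb{F}/z\mathbb{F})$ lies entirely in the subcomplex $\bigoplus_{k \geq 1} \mathbb{F}^{(k)}$, which hinges on tracking the multigraded shift produced by the Koszul connecting map; once this is in place, both the acyclicity of $\mathbb{F}^{(0)}$ and the matching of the pruning algorithm with the decomposition follow by bookkeeping.
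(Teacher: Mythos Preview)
The paper does not supply a proof of this theorem; it is quoted from Boocher \cite{boocher} and used as a black box. So there is no ``paper's own proof'' to compare against, and your task was really to reconstruct Boocher's argument.

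Your approach is essentially the right one and matches the standard proof. The multigraded decomposition $\mathbb{F}/z\mathbb{F}=\bigoplus_{k\ge 0}\mathbb{F}^{(k)}$ by $z$-valuation of the basis multidegrees is the key structural observation, and your degree-shift argument showing $H_1(\mathbb{F}^{(0)})=0$ is correct: a homogeneous cycle in $\mathbb{F}^{(0)}$ sits in a multidegree with $z$-component $0$, while the connecting map identifies $H_1(\mathbb{F}/z\mathbb{F})$ with $(0:_{Q/I}z)$ shifted by $e_z$, forcing the image into negative $z$-degree, hence zero. The bookkeeping that the algorithm deletes exactly $\bigoplus_{k\ge1}\mathbb{F}^{(k)}$ is also fine once you note, as you do, that nonzero entries only run from valuation $k$ to valuation $\le k$, with the strictly smaller ones killed by setting $z=0$.

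Two small points worth tightening. First, your reduction to $|Z|=1$ asserts that pruning by $Z$ agrees with iterated single-variable pruning; this is true, but the order of passes in the two procedures differs (all of $Z$ at each homological degree versus all degrees for $z_1$, then all degrees for $z_2$, \ldots), so the equality is not entirely formal. It is cleaner to run your argument directly for arbitrary $Z$: decompose $\mathbb{F}\otimes_Q Q/\langle Z\rangle=\bigoplus_{\mathbf{k}\in\mathbb{N}^{|Z|}}\mathbb{F}^{(\mathbf{k})}$, and observe that $\operatorname{Tor}_i^Q(Q/I,Q/\langle Z\rangle)$, computed via the Koszul complex on $Z$, vanishes in every multidegree with zero $Z$-components for $i\ge1$; hence $\mathbb{F}^{(\mathbf{0})}$ is acyclic. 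Second, the entries of $A_i$ are scalar multiples of $m/m'$, not literally $m/m'$; this does not affect the argument but is worth stating precisely.
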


We now show that Boocher's pruning process is dg-sensitive in the case of squarefree monomial ideals.  

\begin{thm}\label{pruningisdg}
    Fix the following notation:
    \begin{itemize}
        \item $Q=\Bbbk[x_1,\ldots,x_n]$
        \item $I$ is a squarefree monomial ideal of $Q$
        \item $\FF$ is the minimal $Q$-free resolution of $Q/I$ 
        \item $R'=\Bbbk[x_1,\ldots,x_{k-1},x_{k+1},\ldots,x_n]=Q/(x_k)$
        \item $I'=I/(x_k)$
    \end{itemize}

    %Let $I$ be a squarefree monomial ideal of a polynomial ring $Q=\Bbbk[x_1,\ldots,x_n]$.  Let $\mathbb{F}$ be the minimal $Q$-free resolution of $Q/I$ and $\mathbb{T}$ be the Taylor resolution of $Q/I$, and suppose $\mathbb{F}$ admits the structure of a differential graded algebra.  Let $I'$ be the ideal of $R:=Q/(x_{k_1},\ldots,x_{k_\ell})$ obtained from $I$ by setting $x_{k_1}=\ldots=x_{k_\ell}=0$, i.e., $I'=I/(x_{k_1},\ldots,x_{k_\ell})$.  
    
    Suppose $\mathbb{F}$ admits the structure of a dg algebra over $Q$, so that $\mathbb{F} \cong \mathbb{T}/\mathbb{J}$ for some dg ideal $\mathbb{J}$ of $\mathbb{T}$ (see Theorem \ref{structurethm}). Applying Boocher's pruning process (Definition \ref{pruningprocess}) with $Z=\{x_k\}$, we have that $P(\mathbb{F},Z)$, the minimal $R$-free resolution of $R/I'$, is isomorphic to the complex obtained from $\mathbb{F}$ by modding out by the image of the dg ideal $\mathbb{I}$ from Lemma \ref{principaldgideal} under the quotient map $\mathbb{T} \onto \mathbb{F} \cong \mathbb{T}/\mathbb{J}$ and tensoring down to $R$.  Thus, the minimal free resolution $P(\mathbb{F},Z)$ of $R/I'$ over $R$ admits the structure of a differential graded algebra.
\end{thm}

\begin{proof}

    We break down the proof into steps.  

    % \textbf{Step 1: pruning $x_k$ picks out $\mathbb{I}$.}
    \textbf{Step 1: generate $\mathbb{I}$ by pruning $x_k$.} The differentials we list in what follows are those of $\mathbb{T}$.  Set $x_k$ equal to zero for pruning.  In $\partial_0$, the columns with entries divisible by $x_k$ are deleted.  The corresponding rows of $\partial_1$ are also deleted, killing the terms of the images of the $e_W$, $|W|=2$, that are supported on $e_V$ for some $V \subseteq G(I)$ containing a monomial divisible by $x_k$.  
    
    The columns of $\partial_i$ corresponding to basis elements $e_W$, where $W$ contains a monomial divisible by $x_k$, have nonzero entries corresponding to $e_{W'}$, where $W' \subseteq W$ with $|W'|=|W|-1$.  The entries divisible by $x_k$ go to zero in the quotient.  The other entries correspond to those $W'$ containing monomials divisible by $x_k$.  The columns of $\partial_{i-1}$ corresponding to these $W'$ are deleted in the previous step, and thus these rows in $\partial_i$ are also deleted, leaving a column of zeros in $\partial_i$.  

    The columns of $\partial_i$ corresponding to basis elements $e_W$, where $W$ does not contain a monomial divisible by $x_k$, has nonzero, not-divisible-by-$x_k$ entries before deleting rows.  But the rows in which these entries appear correspond to columns in $\partial_{i-1}$ corresponding to $e_{W'}$, $W' \subseteq W$, $|W'|=|W|-1$, satisfying that no monomial in $W'$ is divisible by $x_k$.  So, these columns are not deleted by the previous step, and thus their corresponding rows in $\partial_i$ are not deleted.  

    The columns and rows crossed out by pruning are the columns and rows corresponding to basis elements $e_W$ of $\mathbb{T}$ with $W$ containing a monomial divisible by $x_k$.  That is, we are setting 
    \[
    0 \to Qe_W \to Q\partial(e_W) \to 0
    \]
    equal to zero for each such $W$, and thus the sum of these short exact sequences generate the dg ideal $\mathbb{I}$ from Lemma \ref{principaldgideal}.
    
    % equal to zero for each such $W$, and thus ``picking out" the dg ideal $\mathbb{I}$ from Lemma \ref{principaldgideal}.  
    
    %\textbf{Step 2: repeat Step 1 to get $\mathbb{I}$}.  Repeating the process above for each of $x_{k_1},\ldots,x_{k_\ell}$ yields the dg ideal $\mathbb{I}$ from Lemma \ref{principaldgideal}.  
    
    \textbf{Step 2: tensoring with $R$ yields a new Taylor resolution}. 
   
    Upon tensoring $\mathbb{T}/\mathbb{I}$ with $R$, we get
    \[
    \mathbb{T}/\mathbb{I} \otimes_Q R \cong \dfrac{\mathbb{T} \otimes_Q R}{\mathbb{I} \otimes_Q R},
    \]
    which is the Taylor resolution on the minimal generators of $I'$.

    \iffalse
    \textbf{Step 3: modding out by $\mathbb{J}$ yields $P(\mathbb{F},Z)$}.  Upon modding out by $\mathbb{J} \otimes_Q R$, we get  
     \[
    P(\mathbb{F},Z) \cong \dfrac{P(\mathbb{T},Z)}{P(\mathbb{J},Z)} \cong \dfrac{\dfrac{\mathbb{T}}{\mathbb{I}} \otimes_Q R}{\dfrac{\mathbb{I} + \mathbb{J}}{\mathbb{I}} \otimes_Q R} \cong \dfrac{\mathbb{T}}{\mathbb{I}+\mathbb{J}} \otimes_Q R \cong \dfrac{\mathbb{F}}{\pi(\mathbb{I})}  \otimes_Q R,
    \]
    where $\pi:\mathbb{T} \to \mathbb{F}$ is the quotient map.  
    \fi

    \textbf{Step 3: modding out by $\mathbb{J}$ yields $P(\mathbb{F},Z)$}. By definition, $P(\mathbb{F},Z) \cong \mathbb{F}/\mathbb{I}' \otimes_Q R$, where $\mathbb{I}'$ is generated by elements of $\mathbb{F}$ that have multidegree divisible by $x_k$. Since $\pi : \mathbb{T} \to \mathbb{F}$ is a surjection that preserves multidegrees, we have $\pi(\mathbb{I}) = \mathbb{I}'$. Thus, 
    \[
    \frac{\dfrac{\mathbb{T}}{\mathbb{I}} \otimes_Q R}{\dfrac{\mathbb{J} + \mathbb{I}}{\mathbb{I}} \otimes_Q R} \cong \frac{\mathbb{T}}{\mathbb{I} + \mathbb{J}} \otimes_Q R \cong \frac{\mathbb{F}}{\pi(\mathbb{I})} \otimes_Q R \cong \frac{\mathbb{F}}{\mathbb{I}'} \otimes_Q R \cong P(\mathbb{F},Z).
    \]
    
    By Lemmas \ref{principaldgideal} and \ref{dgprojection}, $P(\mathbb{F},Z)$ admits the structure of a dg algebra.  

    By Theorem \ref{pruningmons}, the pruning process applied to the minimal free resolution of a monomial ideal produces the minimal free resolution of the pruned ideal over the pruned ring, and so we are done.    
\end{proof}

Pruning iteratively yields the following corollary. 

\begin{cor}\label{iteratepruning}
    If $I$ is a squarefree monomial ideal of $Q=\Bbbk[x_1,\ldots,x_n]$ and the minimal $Q$-free resolution $\mathbb{F}$ of $Q/I$ admits the structure of a differential graded algebra, then so does the minimal $R$-free resolution of $I'$, where $R$ and $I'$ are the quotients of $Q$ and $I$, respectively, by an ideal generated by a subset of $\{x_1,\ldots,x_n\}$.  
\end{cor}

Squarefree monomial ideals are in one-to-one correspondence with facet ideals of simplicial complexes, a generalization of edge ideals introduced by Faridi in \cite{Faridi}.

\begin{defn}\label{facetideal}
    Let $\Delta$ be a simplicial complex on vertices $1,\ldots,n$.  The \textit{facet ideal} $\mathcal{F}(\Delta)$ is the ideal of $Q=\Bbbk[x_1,\ldots,x_n]$ generated by the squarefree monomials $x_{i_1}\cdots x_{i_s}$, where $\{i_1,\ldots,i_s\}$ is a facet of $\Delta$.  
\end{defn}

\begin{remark}
    In Definition \ref{facetideal}, taking $\Delta$ to be a graph recovers Definition \ref{edgeideal}, i.e., the facet ideal of a graph (with no isolated vertices) is its edge ideal.  
\end{remark}

\begin{defn}\label{facetinduced}
    Let $\Delta$ be a simplicial complex on $V=\{v_1,\ldots,v_n\}$, and let $W \subseteq V$.  The \textit{facet-induced subcomplex} $\Delta'$ of $\Delta$ on $W$ is the simplicial complex generated by the facets $F$ of $\Delta$ such that $F \cap (V\backslash W) = \emptyset$.   
\end{defn}

Note that facet-induced subcomplexes and induced subcomplexes need not coincide.

\begin{ex}
    The induced subcomplex of the 2-simplex on any two of its vertices is a 1-simplex, whereas any proper facet-induced subcomplex of the 2-simplex is the empty set.    
\end{ex}

Theorem \ref{pruningisdg} yields the following result about facet ideals of facet-induced subcomplexes.

\begin{cor}\label{simplex}
     Let $\Delta$ be a simplicial complex such that the minimal $Q$-free resolution of $Q/\mathcal{F}(\Delta)$ admits the structure of a differential graded algebra, where $\mathcal{F}(\Delta)$ is the facet ideal of $\Delta$ and $Q$ is the ambient polynomial ring of $\Delta$.  If $\Delta'$ is a facet-induced subcomplex of $\Delta$ and $Q'$ is the ambient polynomial ring of $\Delta'$, then the minimal $Q'$-free resolution of $Q'/\mathcal{F}(\Delta')$ admits the structure of a differential graded algebra.
\end{cor}

Our next result about induced subgraphs follows from Corollary \ref{simplex} by restricting $\Delta$ to be a graph.  

\begin{cor}\label{main}
    Let $G$ be a graph such that the minimal $Q$-free resolution of $Q/I(G)$ admits the structure of a differential graded algebra, where $Q$ is the ambient polynomial ring of $G$.  If $G'$ is a induced subgraph of $G$ and $Q'$ is the ambient polynomial ring of $G'$, then the minimal $Q'$-free resolution of $Q'/I(G')$ admits the structure of a differential graded algebra.  
\end{cor}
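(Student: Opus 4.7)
The plan is to derive Corollary \ref{main} as a direct specialization of Theorem \ref{simplex}, so the proof will be essentially a translation between the graph-theoretic and simplicial-complex-theoretic languages. First I would observe that any finite simple graph $G = (V,E)$ (with no isolated vertices, which we may assume since isolated vertices do not affect the edge ideal) can be viewed as a one-dimensional simplicial complex $\Delta_G$ on $V$ whose facets are exactly the edges of $G$. Under this identification, the facet ideal $\mathcal{F}(\Delta_G)$ coincides with the edge ideal $I_G$ by Definition \ref{facetideal} and Definition \ref{edgeideal}, and the ambient polynomial rings agree.

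Next I would verify that the notion of facet-induced subgraph from Definition \ref{facetinduced} (as applied to $\Delta_G$) is exactly what one expects: if $G' \subseteq G$ is a facet-induced subgraph on vertex subset $W \subseteq V$, then the facets of $G$ meeting $V \setminus W$ trivially are precisely the edges of $G$ entirely contained in $W$, which are the edges of $G'$. Hence $\Delta_{G'}$ is the facet-induced subcomplex of $\Delta_G$ on $W$, and in particular $\mathcal{F}(\Delta_{G'}) = I_{G'}$ over the ambient polynomial ring $Q'$ of $G'$.

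With these identifications in place, the hypothesis of Corollary \ref{main} is precisely the hypothesis of Theorem \ref{simplex} applied to $\Delta = \Delta_G$, and the conclusion of Theorem \ref{simplex} applied to $\Delta' = \Delta_{G'}$ is precisely the conclusion we want. So the proof reduces to a single invocation of Theorem \ref{simplex}. There is no real obstacle here: the only thing worth being careful about is the vertex-set convention (whether isolated vertices are present), but since removing isolated vertices does not change the edge ideal or its minimal free resolution, this is not a substantive issue.
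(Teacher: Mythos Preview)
Your proposal is correct and matches the paper's approach exactly: the paper simply states that Corollary \ref{main} ``follows from Theorem \ref{simplex} by restricting $\Delta$ to be a graph,'' without giving any further argument. Your version is in fact more detailed than the paper's, spelling out the identification of edge ideals with facet ideals and the compatibility of facet-induced subgraphs with facet-induced subcomplexes.
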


\begin{proof}
    The classes of facet-induced subgraphs of a (connected) graph $G$ and induced subgraphs of $G$ only differ in that the latter class contains graphs with isolated vertices.  The edge ideal of a vertex is trivially dg, and the minimal free resolution of the edge ideal of a disconnected graph is the tensor product of the minimal free resolutions of the edge ideals of each connected component.  It follows that induced subgraphs of dg graphs are dg.      
\end{proof}

In particular, Corollary \ref{main} yields the following obstructions to the existence of dg algebra structure.  

\begin{corollary}\label{5+}
    Let $I$ be the edge ideal of a tree of diameter at least five.  The minimal $Q$-free resolution of $Q/I$ does not admit the structure of a differential graded algebra.  
\end{corollary}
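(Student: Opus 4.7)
The plan is to exhibit $P_5$, the path of diameter five, as a facet-induced subgraph of any tree of diameter at least five and then combine Corollary \ref{main} with Proposition \ref{5path} to derive the non-existence of a dg algebra structure.

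First, I would let $\Gamma$ be a tree of diameter $d \geq 5$ and pick a diametral path in $\Gamma$, giving distinct vertices $v_0, v_1, \ldots, v_d$ with $\{v_i, v_{i+1}\}$ an edge of $\Gamma$ for every $0 \leq i \leq d-1$. Setting $W := \{v_0, v_1, \ldots, v_5\}$, I claim that the facet-induced subgraph of $\Gamma$ on $W$ is exactly $P_5$. Since the facets of a graph are its edges, the facet-induced subgraph on $W$ coincides with the usual induced subgraph on $W$: one keeps precisely the edges of $\Gamma$ both of whose endpoints lie in $W$. The edges $\{v_i, v_{i+1}\}$ for $0 \leq i \leq 4$ are obviously present, so one only needs to check that no other edges of $\Gamma$ have both endpoints in $W$. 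But any such extra edge $\{v_i, v_j\}$ with $j > i+1$ would create a cycle $v_i, v_{i+1}, \ldots, v_j, v_i$ in $\Gamma$, contradicting that $\Gamma$ is a tree. Hence the induced subgraph on $W$ is the path on the vertices $v_0, \ldots, v_5$, i.e., a copy of $P_5$.

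With this identification in hand, I would argue by contradiction: suppose the minimal $Q$-free resolution of $Q/I_\Gamma$ admits the structure of a differential graded algebra. Corollary \ref{main} then implies that the minimal $Q'$-free resolution of $Q'/I_{P_5}$ admits the structure of a dg algebra, where $Q'$ is the polynomial ring on the six vertices of the induced $P_5$. This directly contradicts Proposition \ref{5path}, completing the proof.

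The only real content is the observation that ``facet-induced'' and ``induced'' coincide for graphs and that trees admit no extra chords between vertices of a diametral subpath; everything else is immediate from the cited results. There is no obstacle to anticipate beyond ensuring that the subpath of length exactly five is selected so that Proposition \ref{5path} applies verbatim.
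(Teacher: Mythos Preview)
Your proof is correct and follows essentially the same approach as the paper: identify $P_5$ as a facet-induced subgraph of $\Gamma$ and invoke the contrapositive of Corollary~\ref{main} together with Proposition~\ref{5path}. The paper phrases the reduction as ``iteratively pruning leaves'' rather than directly selecting six consecutive vertices on a diametral path, but the content is the same.
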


\begin{proof}
    By iteratively pruning leaves, any tree of diameter at least five may be pruned to the path $P_6$ of diameter five, which is not dg (Proposition \ref{5path}). The contrapositive of Corollary \ref{main} then gives the~result.
\end{proof}

\begin{corollary}\label{cycle+}
    Let $C_n$ be the cycle on $n \geq 7$ vertices.  The minimal $Q$-free resolution of $Q/I(C_n)$ does not admit the structure of a differential graded algebra.  
\end{corollary}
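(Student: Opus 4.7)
The plan is to mimic the strategy used for Corollary \ref{5+}: exhibit a facet-induced subgraph of $C_n$ whose edge ideal is already known to fail the dg property, and then invoke the contrapositive of Corollary \ref{main}. The natural candidate is $P_5$, the path on five edges, which is non-dg by Proposition \ref{5path}.

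Concretely, label the vertices of $C_n$ cyclically as $v_1, v_2, \ldots, v_n$, with edges $\{v_i, v_{i+1}\}$ (indices mod $n$). For $n \geq 7$, take $W = \{v_1, v_2, \ldots, v_6\}$, a set of six consecutive vertices, and let $G'$ be the facet-induced subgraph of $C_n$ on $W$ in the sense of Definition \ref{facetinduced}. The edges of $C_n$ whose endpoints lie entirely in $W$ are precisely $\{v_1,v_2\}, \{v_2,v_3\}, \{v_3,v_4\}, \{v_4,v_5\}, \{v_5,v_6\}$. The only other edge of $C_n$ meeting $W$ in more than one vertex would be the wrap-around $\{v_n, v_1\}$ together with $\{v_6, v_7\}$, neither of which is contained in $W$ because $n \geq 7$ guarantees $v_6$ and $v_1$ are not adjacent in $C_n$. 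Hence $G' = P_5$.

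By Proposition \ref{5path}, the minimal $Q'$-free resolution of $Q'/I_{P_5}$ does not admit the structure of a dg algebra. If the minimal $Q$-free resolution of $Q/I_{C_n}$ did admit a dg algebra structure, then Corollary \ref{main} applied to the facet-induced subgraph $G' = P_5$ would force the minimal $Q'$-free resolution of $Q'/I_{P_5}$ to be dg, a contradiction. Therefore $Q/I_{C_n}$ is not dg for $n \geq 7$.

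There is no real obstacle here: the only subtle point is verifying that the $n \geq 7$ hypothesis is exactly what prevents the wrap-around edge of the cycle from appearing in the facet-induced subgraph, so that we land on the path $P_5$ rather than on the cycle $C_6$ (which would require a separate argument, handled in Section \ref{class}).
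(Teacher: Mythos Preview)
Your proof is correct and follows essentially the same approach as the paper. The only cosmetic difference is that the paper prunes $C_n$ to a path (tree) of diameter $n-2 \geq 5$ and then invokes Corollary \ref{5+}, whereas you prune directly to $P_5$ and invoke Proposition \ref{5path}; since Corollary \ref{5+} itself reduces to $P_5$, the two arguments are the same up to whether the intermediate step is made explicit.
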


\begin{proof}
    Any such cycle prunes to a tree of diameter at least five, which is not dg  by Corollary \ref{5+}.  
\end{proof}

\section{Classifications}\label{class}

In this section, we package our previous results to classify the trees and cycles $G$ such that $Q/I(G)$ is minimally resolved by a dg algebra.  We begin with trees.  The existence results in Sections \ref{Lyubezniksection} and \ref{d4section} and the obstructions from the pruning process in Section \ref{pruningsection} come together to yield the following classification.

\begin{thm}\label{classification}
    Let $\Gamma$ be a tree of diameter $d$.  The minimal $Q$-free resolution of $Q/I(\Gamma)$ admits the structure of a differential graded algebra if and only if $d \leq 4$.  
\end{thm}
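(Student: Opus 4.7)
The plan is to simply package together the results already established in Sections \ref{Lyubezniksection}, \ref{d4section}, and \ref{pruningsection}. The statement is a biconditional, so I would organize the argument by cases on the diameter $d$, handling the ``if'' direction (existence of dg structure for $d \leq 4$) and the ``only if'' direction (non-existence for $d \geq 5$) separately.

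For the forward direction, I would split into sub-cases based on $d$. If $d \in \{0,1,2\}$, then Observation \ref{d012obs} gives that the minimal free resolution of $Q/I_\Gamma$ coincides with the Taylor resolution, which carries the dg structure of Example \ref{taylorex}. If $d = 3$, then $\Gamma$ is a Lyubeznik graph of the form $L(a,b,0)$ with $a,b \geq 1$, and Corollary \ref{d3dg} (which rests on Theorem \ref{Lyubeznikdg}) gives the dg structure. If $d = 4$, then Corollary \ref{cor:treeProd} supplies an explicit dg structure on $\Cone{\Psi}$, which by Theorem \ref{thm:diam4res} is the minimal free resolution of $Q/I_\Gamma$.

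For the reverse direction, suppose $d \geq 5$. Then Corollary \ref{5+} (whose proof uses Proposition \ref{5path} together with the dg-sensitivity of the pruning process, i.e., Corollary \ref{main}) shows that the minimal free resolution of $Q/I_\Gamma$ cannot admit the structure of a dg algebra. Concretely, one can iteratively prune leaves of $\Gamma$ to obtain the path $P_5$ as a facet-induced subgraph; if $Q/I_\Gamma$ were dg, then Corollary \ref{main} would force $Q/I_{P_5}$ to be dg, contradicting Proposition \ref{5path}.

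Since every case is covered by a result already proved, no new obstacle arises here; the work has been front-loaded into the preceding sections. The only mild care needed is to verify the case breakdown for small $d$ (namely, that trees of diameter exactly $3$ are precisely $L(a,b,0)$ with $a,b \geq 1$, and that trees of diameter $\leq 2$ have edge ideal whose minimal generators are pairwise supported on disjoint variables in a sense that makes the Taylor resolution minimal), both of which are addressed in Observation \ref{d012obs} and the discussion preceding Corollary \ref{d3dg}.
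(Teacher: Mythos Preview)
Your proposal is correct and follows essentially the same approach as the paper's own proof, which simply cites Observation \ref{d012obs}, Corollary \ref{d3dg}, and Corollary \ref{cor:treeProd} for the existence direction and Corollary \ref{5+} for the nonexistence direction. Your version is slightly more expansive in unpacking the case breakdown and the logic behind Corollary \ref{5+}, but the substance is identical.
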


\begin{proof}
    Observation \ref{d012obs} combines with Corollary \ref{d3dg} and Corollary \ref{cor:treeProd} for the existence case, while Corollary \ref{5+} provides the nonexistence case.  
\end{proof}

We combine results from Section \ref{pruningsection} with the paper \cite{BE} to yield an analogous result for cycles.

\begin{thm}\label{classification2}
    Let $C_n$ be the cycle on $n$ vertices.  The minimal $Q$-free resolution of $Q/I(C_n)$ admits the structure of a differential graded algebra if and only if $n \leq 5$. 
\end{thm}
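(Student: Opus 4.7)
The plan is to prove each direction of the biconditional separately, with the nonexistence direction further splitting into the subcases $n = 6$ and $n \geq 7$. Since the heavy lifting has been done in earlier propositions, the proof amounts to assembling three results whose obstructions are of genuinely different natures.

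For the forward direction, assume $n \leq 5$. Small cycles are handled by Proposition \ref{cycles5-}: $C_3$ resolves via the Hilbert--Burch complex, $C_4$ via a tensor product of Koszul complexes, and $C_5$ admits the dg structure exhibited in Example \ref{notiff} (equivalently, a Buchsbaum--Eisenbud resolution). Each of these cases produces an explicit dg structure on the minimal free resolution of $Q/I_{C_n}$.

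For the reverse direction, I prove the contrapositive by splitting on $n$. If $n \geq 7$, then by iteratively pruning a single edge from the cycle, one obtains a path of diameter at least $5$; since $P_5$ is not dg (Proposition \ref{5path}) and all trees of diameter $\geq 5$ contain $P_5$ as a facet-induced subgraph, Corollary \ref{main} (or its packaged form Corollary \ref{cycle+}) then forces $Q/I_{C_n}$ to be non-dg. The remaining case $n = 6$ cannot be reduced by pruning to any known non-dg graph, and instead requires Proposition \ref{6cycle}: one computes (e.g., in \emph{Macaulay2}) that the Betti vector of $Q/I_{C_6}$ is $(1,6,9,6,2)$, and then applies K\"atth\"an's $f$-vector obstruction \cite[Theorem 4.1]{Kat} together with Kruskal--Katona to show this vector cannot even occur as the $f$-vector of a simplicial complex, let alone one that is a cone.

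Since this theorem is essentially a packaging statement, there is no real obstacle at this stage; the subtlety worth noting is that the nonexistence halves of the argument genuinely require two different obstructions, since pruning alone does not rule out $C_6$ (indeed, every proper pruning of $C_6$ is dg), and conversely the numerical obstruction for $C_6$ does not immediately extend to larger cycles. Thus the combination of Theorem \ref{pruningisdg} with K\"atth\"an's $f$-vector criterion is precisely what is needed to cover all $n \geq 6$ simultaneously.
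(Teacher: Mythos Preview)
Your proposal is correct and follows essentially the same approach as the paper: invoke Proposition \ref{cycles5-} for $n \leq 5$, Proposition \ref{6cycle} for $n = 6$, and Corollary \ref{cycle+} for $n \geq 7$. One small terminological slip: pruning acts on vertices (variables), not edges, so the correct statement is that removing a single vertex from $C_n$ with $n \geq 7$ yields a path of diameter $n-2 \geq 5$; your commentary on why the two obstructions are genuinely different is accurate and a nice addition.
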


\begin{proof}

    For $n \leq 4$, the minimal free resolution of $Q/I(C_n)$ admits the structure of a dg algebra by \cite{BE}.  Example \ref{notiff} uses discrete Morse theory to show that $C_5$ is dg.  One could alternatively note the minimal free resolution of $Q/I(C_5)$ is a specific instance of a Buchsbaum-Eisenbud resolution, which admits the structure of a dg algebra \cite[Theorem 4.1]{BE}.  %, \cite[Theorem 6.14]{keri}

    For $n=6$, the minimal $Q$-free resolution of $Q/I(C_6)$ does not admit the structure of a dg algebra. Indeed, by \cite[Theorem 4.1]{Kat}, if the minimal free resolution of $Q/I(C_6)$ admits the structure of a dg algebra, then its Betti vector (its Betti numbers listed as a vector in increasing homological order) must be the $f$-vector of a simplicial complex which is a cone.  A quick computation in \textit{Macaulay2} \cite{M2} shows that the Betti vector of $Q/I(C_6)$ is $(1,6,9,6,2)$.  Using the Kruskal-Katona Theorem (see, for example, \cite[Theorem 2.1]{KK}) and the facts that
   \[
   2={\binom{4}{4}}+{\binom{3}{3}} \, \text { and }\, {\binom{4}{3}}+{\binom{3}{2}}=4+3=7 > 6,
   \]
   we find that $(1,6,9,6,2)$ is not the $f$-vector of a simplicial complex.

   Finally, Corollary \ref{cycle+} takes care of the case when $n \geq 7$.
\end{proof}

\begin{remark}\label{dgstructures}
    It is worth noting that explicit dg algebra structures on the minimal free resolutions of $Q/I(C_n)$ are known for $n=3,4,5$.  For $n=3$, the structure is that of the Hilbert-Burch complex \cite{herzog}.  The $n=4$ case follows from setting $\mathcal{X}$ and $\mathcal{Y}$ from \cite[Corollary 4.6.9]{hugh} to be the Koszul complexes on $x_1, x_3$ and $x_2, x_4$, respectively.  As mentioned in the proof of \ref{classification2}, the minimal free resolution of $Q/I(C_5)$ is a Buchsbaum-Eisenbud resolution, and one can see, for example, \cite{keri} for an explicit description of the multiplication.    
\end{remark}

\appendix

\section{Graded Commutativity}\label{append:GC}

% In this appendix we prove Theorem \ref{thm:gc}, which we now restate.  

In this appendix we prove the product in Definition \ref{defn:treeProd}, which we now restate, is graded commutative.

\begin{thm*}
    Recall the products in Definition \ref{defn:treeProd}:
    \setcounter{equation}{0}
    \begin{align}
    (f_1,0,0) \cdot (f_2,0,0) &= (f_1f_2,0,0) \\
    (f_1,0,0) \cdot (0,g_2,0) &= \left( \frac{1}{z}f_1\Phi(g_2), 0, \omega_{f_1}(g_2)\right) \text{, where } \omega_{f_1}(g_2) = \begin{cases} -x_{i}g_{2}, & \text{if} \hspace{2mm} \partial^\FF(f_1) = x_iz, \\ 0, & \text{if}\hspace{2mm} |f_1|>1, \end{cases} \\
    (f_1,0,0) \cdot (0,0,g_2) &= (0,0,0) \\
    (0,g_1,0) \cdot (0,g_2,0) &= (0,g_1 g_2,0) \\
    (0,g_1,0) \cdot (0,0,g_2) &= (-1)^{|g_1|}(0,0,g_1g_2) \\
    (0,0,g_1) \cdot (0,0,g_2) &= (0,0,0) \\
    (0,g_1,0) \cdot (f_2,0,0) &= \left( \frac{1}{z}\Phi(g_1)f_2, 0, (-1)^{|g_1|}\omega_{f_2}(g_1)\right) \\
    (0,0,g_1) \cdot (f_2,0,0) &= (0,0,0) \\
    (0,0,g_1) \cdot (0,g_2,0) &= (0,0, g_1g_2).
    \end{align}
    The products are graded commutative.
\end{thm*}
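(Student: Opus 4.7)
The plan is to verify the three conditions (i)--(iii) of Definition \ref{dga} comprising graded commutativity directly from Definition \ref{defn:treeProd}, leveraging the fact that $\FF$ and $\GG$, as Taylor resolutions, already satisfy these properties (Example \ref{taylorex}). The grading condition $\Cone{\Psi}_i \cdot \Cone{\Psi}_j \subseteq \Cone{\Psi}_{i+j}$ is immediate from a degree check on each of the ten products, using the conventions $|(f,0,0)| = |f|$, $|(0,g,0)| = |g|$, and $|(0,0,g)| = |g|+1$, together with the preceding lemma showing $\tfrac{1}{z} f\Phi(g) \in \FF_{|f|+|g|}$.

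The signed commutativity relation $a \cdot b = (-1)^{|a||b|}\, b \cdot a$ reduces by bilinearity to checking each unordered pair of types. The three pure pairs (1), (4), and (6) collapse respectively to graded commutativity of $\FF$, of $\GG$, and to trivial vanishing. The vanishing pair (3)/(8) is automatic. The two genuine computations are the pairs (2)/(7) and (5)/(10). For (2)/(7), since $\Phi(g_2) \in \FF_{|g_2|}$, graded commutativity in $\FF$ gives $f_1\Phi(g_2) = (-1)^{|f_1||g_2|}\Phi(g_2)f_1$, which handles the first components; for the third, one needs $\omega_{f_1}(g_2) = (-1)^{|g_2|(|f_1|+1)}\omega_{f_1}(g_2)$, which holds because $\omega_{f_1}$ vanishes unless $|f_1| = 1$, in which case the exponent becomes $2|g_2|$. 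For (5)/(10), applying $g_2 g_1 = (-1)^{|g_1||g_2|} g_1 g_2$ in $\GG$ together with the modular identity $|g_1|(|g_2|+1) + |g_1||g_2| \equiv |g_1| \pmod{2}$ furnishes the required sign match on the third component.

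For vanishing of squares of odd-degree elements, the pure cases dispatch themselves: $(f,0,0)^2 = (f^2,0,0)$ and $(0,g,0)^2 = (0, g^2, 0)$ vanish by the corresponding property in $\FF$ and $\GG$, while $(0,0,g)^2 = 0$ holds by definition. For a general odd-degree element $(f, g_1, g_2)$, where $|f|$ and $|g_1|$ must be odd and $|g_2|$ must be even, expanding $(f, g_1, g_2)^2$ into its nine bilinear cross terms reveals that each direct summand cancels: the first-component sum $\tfrac{1}{z}\bigl(f\Phi(g_1) + \Phi(g_1) f\bigr)$ vanishes by graded commutativity in $\FF$ (the exponent $|f||g_1|$ is odd), the middle component is $g_1^2 = 0$, and the third component decomposes as $\omega_f(g_1) + (-1)^{|g_1|}\omega_f(g_1)$ together with $(-1)^{|g_1|} g_1 g_2 + g_2 g_1$, each of which vanishes using $|g_1|$ odd and $|g_2|$ even.

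No step represents a genuine conceptual obstacle; the work is entirely sign bookkeeping, to be carried out case-by-case in Appendix \ref{append:GC}. The one observation that makes the computation close up cleanly is that $\omega_{f_1}(g_2)$ is supported only on $|f_1| = 1$, so the potentially dangerous parity factors $(-1)^{|g_2|(|f_1|+1)}$ collapse to $1$ precisely where they need to.
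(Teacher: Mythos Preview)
Your proposal is correct and follows essentially the same approach as the paper's proof in Appendix~\ref{append:GC}: both verify the degree condition directly, handle the pure pairs via the dg structures on $\FF$ and $\GG$, treat the mixed pairs (2)/(7), (3)/(8), (5)/(9) by the same sign computations (crucially using that $\omega_{f_1}$ vanishes unless $|f_1|=1$), and finish by expanding the square of a general odd-degree element into nine cross terms and cancelling. One small typo: your reference to the pair ``(5)/(10)'' should read (5)/(9), as there is no product (10).
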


\begin{proof}
    We first note that $(q,0,0)\cdot (f_1,g_1,g_2) = q (f_1,g_1,g_2)$ is automatically graded commutative since $(1,0,0)$ is the identity and $(q,0,0) = q(1,0,0)$. The products \eqref{prod:ff} and \eqref{prod:gg} inherit graded commutativity from $\FF$ and $\GG$, respectively. The vanishing of product \eqref{prod:shiftg} automatically satisfies graded commutativity.

    To understand the rest of the proof, it is helpful to recall that for $f \in \FF$ and $g \in \GG_{>0}$, we have that $|(f,0,0)| = |f|$, $|(0,g,0)| = |g|$, and $|(0,0,g)| = |g| + 1$.

    The rest of the products come in pairs, in the sense that proving one yields the other. We compare products \eqref{prod:fg1} and \eqref{prod:g1f} by considering the cases when $|f_1|=1$ and when $|f_1|\neq1$ separately. When $|f_1|=1$, we find that \looseness -1
    \begin{align*}
        (f_1,0,0)(0,g_1,0) &= \left(\frac{1}{z} f_1 \Phi(g_1), 0 , \omega_{f_1}(g_1) \right) \\
         &= (-1)^{|f_1| \cdot |g_1|}\left(\frac{1}{z} \Phi(g_1) f_1, 0 , (-1)^{|f_1| \cdot |g_1|} \omega_{f_1}(g_1) \right) \\
         &= (-1)^{|f_1| \cdot |g_1|}\left(\frac{1}{z} \Phi(g_1) f_1, 0 , (-1)^{|g_1|} \omega_{f_1}(g_1) \right) \\
         &= (-1)^{|f_1| \cdot |g_1|} (0, g_1, 0) (f_1,0,0).
    \end{align*}

Furthermore, looking at the degrees, we have $|(f_1\Phi(g_1),0,0)|= |f_1|+|g_1| = 1 +|g_1|=|(0,0,\omega_{f_1}(g_1))|$. Thus, 
 \begin{align*}
        \left|(f_1,0,0) \cdot (0,g_1,0) \right| &= \left| \left(\frac{1}{z} f_1 \Phi(g_1) , 0, \omega_{f_1}(g_1)\right) \right| \\
        &= \left| \left(\frac{1}{z}f_1\Phi(g_1),0,0\right) + (0,0,\omega_{f_1}(g_1))\right|\\
         &= \left| \left(\frac{1}{z} f_1 \Phi(g_1) , 0, 0\right) \right| \\
         &= \left| f_1 \Phi(g_1)\right| \\
         &= |f_1| + |g_1| \\
         &= |(f_1,0,0)| + |(0,g_1,0)|.
    \end{align*}
   When $|f_1| \neq 1$, we have $\omega_{f_1}(g_1) = 0$. Thus,

   \begin{align*}
        (f_1,0,0)(0,g_1,0) &= \left(\frac{1}{z} f_1 \Phi(g_1), 0 , \omega_{f_1}(g_1) \right) \\
         &= (-1)^{|f_1| \cdot |g_1|}\left(\frac{1}{z} \Phi(g_1) f_1, 0 , 0 \right) \\
         &= (-1)^{|f_1| \cdot |g_1|} (0, g_1, 0) (f_1,0,0).
    \end{align*}
  Since $\omega_{f_1}(g_1) = 0$, a similar computation of degrees holds:  
   \begin{align*}
        \left|(f_1,0,0) \cdot (0,g_1,0) \right| 
         &= \left| \left(\frac{1}{z} f_1 \Phi(g_1) , 0, 0\right) \right| \\
         &= \left| f_1 \Phi(g_1)\right| \\
         &= |f_1| + |g_1| \\
         &= |(f_1,0,0)| + |(0,g_1,0)|.
    \end{align*}

    The next pair is products \eqref{prod:fg2} and \eqref{prod:g2f}. In this case, both products vanish, and thus \[(f_1,0,0)(0,0,g_1) = (0,0,0) = (-1)^{|f_1|\cdot(|g_1| + 1)} (0,0,0) = (-1)^{|f_1|\cdot(|g_1| + 1)} (0,0,g_1)(f_1,0,0).\] The vanishing of both products also yields \[|(f_1,0,0) \cdot (0,0,g_1)| = |(f_1,0,0)| + |(0,0,g_1)|.\]

    Products \eqref{prod:g1g2} and \eqref{prod:g2g1} are the final pair. We find that \[(0,g_1,0)(0,0,g_2) = (-1)^{|g_1|} (0,0,g_1g_2) = (-1)^{|g_1| + |g_1| \cdot |g_2|} (0,0,g_2g_1) = (-1)^{|g_1| \cdot (|g_2| + 1)} (0,0,g_2)(0,g_1,0).\] For the degrees, we see that
    \begin{align*}
        |(0,g_1,0) \cdot (0,0,g_2)| &= \left|(-1)^{|g_1|} (0,0,g_1g_2)\right| \\
         &= |g_1g_2| + 1 \\
         &= |g_1| + |g_2| + 1 \\
         &= |(0,g_1,0)| + |(0,0,g_2)|.
    \end{align*}

    Lastly, consider $(f_1,g_1,g_2) \in \Cone{\Psi}_i$, where $i$ is odd. Specifically, we have $|f_1| = |g_1| = i$ is odd and $|g_2| = i - 1$ is even. Since $\FF$ and $\GG$ are each Taylor resolutions, $f_1 \in \FF_{>0}$, and $g_1,g_2 \in \GG_{>0}$, we have $f_1^2 = g_1^2 = g_2^2 = 0$.  So,

    \begin{align*}
        (f_1,g_1,g_2)^2 &= (f_1,0,0)^2 + (f_1,0,0)(0,g_1,0) + (f_1,0,0)(0,0,g_2) + (0,g_1,0)(f_1,0,0) + (0,g_1,0)^2 \\
         &\qquad  +(0,g_1,0)(0,0,g_2) + (0,0,g_2)(f_1,0,0) + (0,0,g_2)(0,g_1,0) + (0,0,g_2)^2 \\
         &= \left(f_1^2,0,0\right) + \left(\frac{1}{z}f_1\Phi(g_1),0,\omega_{f_1}(g_1)\right) + \left(\frac{1}{z}\Phi(g_1)f_1,0,(-1)^i \omega_{f_1}(g_1)\right) + (0,g_1^2,0) \\
         &\qquad + (-1)^i (0,0,g_1g_2) + (0,0,g_2g_1) \\
         &= \left(\frac{1}{z}f_1\Phi(g_1),0,\omega_{f_1}(g_1)\right) - \left(\frac{1}{z}f_1\Phi(g_1),0,\omega_{f_1}(g_1)\right) - (0,0,g_1g_2) + (0,0,g_1g_2) \\
         &= 0.
    \end{align*}
    Thus, the product is graded commutative.
\end{proof}

\section{Associativity} \label{append:assoc}
In this appendix we prove the products from Definition \ref{defn:treeProd} satisfy associativity. To do this, we recall from Lemma \ref{lem:notDGmorph} that \[\Phi(g_1g_2) = \frac{1}{z} \Phi(g_1) \Phi(g_2) \] for all $g_1, g_2 \in \GG$.

\begin{thm*}
     The products in Definition \ref{defn:treeProd} are associative.
\end{thm*}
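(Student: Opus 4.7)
The plan is to exploit the $Q$-bilinearity of the products in Definition~\ref{defn:treeProd}. Since every element of $\Cone{\Psi}$ decomposes canonically as $(f, 0, 0) + (0, g_1, 0) + (0, 0, g_2)$, it suffices to verify $(a \cdot b) \cdot c = a \cdot (b \cdot c)$ on ``pure'' triples, where each factor is one of four types: a scalar $(q, 0, 0)$ with $q \in Q$, a term $(f, 0, 0)$ with $f \in \FF_{>0}$, a term $(0, g, 0)$ with $g \in \GG_{>0}$, or a term $(0, 0, g)$ with $g \in \GG_{>0}$. This yields $4^3 = 64$ cases.

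First, the 37 cases in which some factor is a scalar associate trivially, because multiplication by $(q, 0, 0)$ coincides with multiplication by $q \in Q$. Among the remaining 27 cases, the all-$\FF$ triple $(II,II,II)$ and the all-$\GG$ triple $(III,III,III)$ inherit associativity from the dg structures on $\FF$ and $\GG$ (Example~\ref{taylorex}). A substantial batch of what is left will be handled by vanishing: any triple containing an adjacent pair of types $(II,IV)$, $(IV,II)$, or $(IV,IV)$ has an internal product equal to zero by \eqref{prod:fg2}, \eqref{prod:g2f}, or \eqref{prod:shiftg}, and tracing the effect of the remaining factor confirms that both association orderings vanish on such triples.

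That reduces the problem to roughly ten honest computations. The essential tool for these is Lemma~\ref{lem:notDGmorph}, which supplies the identity
\[
\Phi(g_1 g_2) = \frac{1}{z}\Phi(g_1)\Phi(g_2).
\]
This identity is precisely what reconciles the $1/z$ factors in products \eqref{prod:fg1} and \eqref{prod:g1f} when associativity forces a $\GG$-product on one side of the equation and a product of $\Phi$-images on the other. A second useful observation is that $\omega_f$ vanishes whenever $|f| > 1$, so in most computations the third-coordinate bookkeeping is trivial.

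The main obstacle will be the degree-one subcases, in which $\omega_f(g) = -x_i g$ contributes a nontrivial term in the third coordinate. The plan for these is to compare the two orderings term by term, using the fact that $\omega_f$ amounts to scalar multiplication by $-x_i$ on $\GG$ (hence commutes with the $\GG$-product), and to match the resulting signs against those already imposed by the graded-commutative structure established in Theorem~\ref{thm:gc}. The calculations themselves are routine but numerous, and so are relegated to Appendix~\ref{append:assoc}.
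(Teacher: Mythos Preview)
Your proposal is correct and follows essentially the same approach as the paper: reduce by $Q$-bilinearity to the $4^3=64$ pure triples, discard the $37$ scalar cases, isolate the vanishing cases involving a type-IV factor, and handle the remaining roughly ten nontrivial cases by direct computation with Lemma~\ref{lem:notDGmorph} governing the $\frac{1}{z}\Phi$-terms and the degree-one behavior of $\omega_f$ governing the third coordinate. Your adjacency criterion for vanishing is slightly narrower than the paper's bookkeeping (it leaves a few extra triples such as $(III,IV,III)$ and $(II,III,IV)$ to be checked by hand), but these fold harmlessly into the ``roughly ten'' residual computations you already defer to the appendix.
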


\begin{proof}
    There are 27 total triples to check, but 17 of them equal $(0,0,0)$ which we leave to the interested reader. We thus only check the 10 non-vanishing combinations which we list below:
    \begin{enumerate}
        \item $(f_1,0,0)(f_2,0,0)(f_3,0,0)$
        \item $(0,g_1,0)(0,g_2,0)(0,g_3,0)$
        \item $(f_1,0,0)(f_2,0,0)(0,g_1,0)$
        \item $(f_1,0,0)(0,g_1,0)(f_2,0,0)$
        \item $(0,g_1,0)(f_1,0,0)(f_2,0,0)$
        \item $(f_1,0,0)(0,g_1,0)(0,g_2,0)$
        \item $(0,g_1,0)(f_1,0,0)(0,g_2,0)$
        \item $(0,g_1,0)(0,g_2,0)(f_1,0,0)$
        \item $(0,g_1,0)(0,g_2,0)(0,0,g_3)$
        \item $(0,0,g_3)(0,g_1,0)(0,g_2,0)$
    \end{enumerate}

    We note that the first two triples inherit associativity from $\FF$ and $\GG$, respectively.

    For the third nontrivial triple, recall that since $|f_1f_2|>1, \omega_{f_1f_2}(g_1)=0$, and so we find
    \begin{align*}
        [(f_1,0,0)(f_2,0,0)](0,g_1,0) &= \left(f_1f_2,0,0\right)(0,g_1,0) \\
         &= \left(\frac{1}{z}f_1f_2\Phi(g_1),0,0 \right) \\
         &= (f_1,0,0)\left(\frac{1}{z}f_2\Phi(g_1),0, \omega_{f_2}(g_1)\right) \\
         &= (f_1,0,0)[(f_2,0,0)(0,g_1,0)].
    \end{align*}

    For the fourth triple, we find
    \begin{align*}
        [(f_1,0,0)(0,g_1,0)](f_2,0,0) & = \left(\frac{1}{z}f_1\Phi(g_1),0,\omega_{f_1}(g_1)\right)(f_2,0,0) \\
        &= \left(\frac{1}{z}f_1\Phi(g_1)f_2,0,0 \right) \\
        &= (f_1,0,0)\left( \frac{1}{z}\Phi(g_1)f_2,0,\omega_{f_2}(g_1)\right) \\
        &= (f_1,0,0)[(0,g_1,0)(f_2,0,0)]
     \end{align*}

     For the fifth triple, we see that $|f_1f_2| = |f_1| + |f_2| >1$, thus $\omega_{f_1f_2}(g_1)=0$. We then find the following:
     \begin{align*}
        [(0,g_1,0)(f_1,0,0)](f_2,0,0) &= \left(\frac{1}{z}\Phi(g_1)f_1,0, (-1)^{|g_1|} \omega_{f_1}(g_1) \right)(f_2,0,0) \\
         &= \left(\frac{1}{z}\Phi(g_1)f_1f_2,0,0 \right) \\
         &= (0,g_1,0)(f_1f_2,0,0) \\
         &= (0,g_1,0)[(f_1,0,0)(f_2,0,0)]
    \end{align*}

    % For the sixth triple, recall that, $ z \Phi(g_1g_2) = \Phi(g_1)\Phi(g_2)$ and note if $\partial(f_1)=x_1z$, then $\omega_{f_1}(g_1g_2)= -x_1g_1g_2= \omega_{f_1}(g_1)g_2$.
    For the sixth triple, note if $\partial(f_1)=x_1z$, then $\omega_{f_1}(g_1g_2)= -x_1g_1g_2= \omega_{f_1}(g_1)g_2$.
    \begin{align*}
        [(f_1,0,0)(0,g_1,0)](0,g_2,0) &= \left( \frac{1}{z}f_1\Phi(g_1),0,\omega_{f_1}(g_1)\right)(0,g_2,0) \\
        &= \left(\frac{1}{z}f_1\Phi(g_1),0,0 \right)(0,g_2,0) + (0,0, \omega_{f_1}(g_1))(0,g_2,0) \\
        &= \left(\frac{1}{z^2}f_1\Phi(g_1)\Phi(g_2),0, \omega_{f_1}(g_1)g_2\right) \\
        &= \left( \frac{1}{z}f_1\Phi(g_1g_2),0,\omega_{f_1}(g_1g_2)\right)\\
        &= (f_1,0,0)(0, g_1g_2,0) \\
        &= (f_1,0,0)[(0,g_1,0)(0,g_2,0)]
    \end{align*}

    For the seventh triple, we note that $\omega_{f_1}$ is multiplication by the scalar $-x_i$, and so $\omega_{f_1}(g_1)g_2 =( -x_i)g_1g_2 = g_1(-x_ig_2)=g_1\omega_{f_1}(g_2).$ Thus,
    \begin{align*}
        [(0,g_1,0)(f_1,0,0)](0,g_2,0) &= \left(\frac{1}{z}\Phi(g_1)f_1,0, (-1)^{|g_1|}\omega_{f_1}(g_1) \right)(0,g_2,0) \\
         &= \left(\frac{1}{z^2}\Phi(g_1)f_1\Phi(g_2),0, (-1)^{|g_1|}\omega_{f_1}(g_1)g_2\right)\\
         &= \left(\frac{1}{z^2}\Phi(g_1)f_1\Phi(g_2), 0, (-1)^{|g_1|}g_1\omega_{f_1}(g_2)  \right) \\
         &= (0,g_1,0)\left( \frac{1}{z}f_1\Phi(g_2), 0, \omega_{f_1}(g_2)\right) \\
         &= (0,g_1,0)[(f_1,0,0)(0,g_2,0)].
    \end{align*}

    For the eighth triple, we find:
    \begin{align*}
        [(0,g_1,0)(0,g_2,0)](f_1,0,0) &= (0,g_1g_2,0)(f_1,0,0) = \left(\frac{1}{z}\Phi(g_1g_2)f_1, 0, (-1)^{|g_1g_2|}\omega_{f_1}(g_1g_2) \right) \\
         &= \left(\frac{1}{z^2}\Phi(g_1)\Phi(g_2)f_1, 0, (-1)^{|g_1|+|g_2|}g_1\omega_{f_1}(g_2)\right) \\
         &= (0,g_1,0)\left( \frac{1}{z}\Phi(g_2)f_1,0,(-1)^{|g_2|}\omega_{f_1}(g_2)\right) \\
         &= (0,g_1,0)[(0,g_2,0)(f_1,0,0)]
    \end{align*}

    For the ninth triple, we find:
    \begin{align*}
        [(0,g_1,0)(0,g_2,0)](0,0,g_3) &= (0,g_1g_2,0)(0,0,g_3) \\
         &= (-1)^{|g_1g_2|}(0,0,(g_1g_2)g_3) \\
         &= (-1)^{|g_1|+|g_2|}(0,0,g_1g_2g_3) \\
         &= (-1)^{|g_2|}(g_1,0,0)(0,0,g_2g_3) \\
         &= (g_1,0,0)[(0,0,g_2)(0,0,g_3)]
    \end{align*}

    For the tenth triple, we find:
    \begin{align*}
        [(0,0,g_3)(0,g_1,0)](0,g_2,0) &= (0,0,g_3g_1)(0,g_2,0) \\
         &= (0,0,g_3g_1g_2) \\
         &= (0,0,g_3)(0,g_1g_2,0) \\
         &= (0,0,g_3)[(0,g_1,0)(0,g_2,0)]
    \end{align*}

    Since all the non-vanishing triples are associative, we find the product formula is associative.
\end{proof}

\section{Leibniz Rule}\label{append:LR}

In this appendix we prove the products from Definition \ref{defn:treeProd} satisfy the Leibniz rule. To do this, we recall from Lemma \ref{lem:partialMult} that
\[\left(\partial^{\FF}(f),0,0\right) \cdot \left(0,g,0\right) = \begin{cases} \left(\frac{1}{z} \partial^{\FF}(f) \Phi(g),0,0 \right) & |f| > 1, \\ (0,\partial^{\FF}(f)g, 0) & |f| = 1. \end{cases}\]

\begin{thm*}
    The products in Definition \ref{defn:treeProd} satisfy the Leibniz rule.  
\end{thm*}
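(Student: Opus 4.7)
My plan is to verify the Leibniz rule $\partial(a \cdot b) = \partial(a)\,b + (-1)^{|a|}\, a\,\partial(b)$ for $a, b \in \Cone{\Psi}$ by case analysis on the types of the factors. Since $\Cone{\Psi}_n = \FF_n \oplus \GG_n \oplus \GG_{n-1}$ (Remark \ref{rem:structure}) and the product in Definition \ref{defn:treeProd} is bilinear, it suffices to take $a$ and $b$ each of one of four basic types: $(q, 0, 0)$ with $q \in Q$, $(f, 0, 0)$ with $f \in \FF_{>0}$, $(0, g, 0)$ with $g \in \GG_{>0}$, or $(0, 0, g)$ with $g \in \GG_{>0}$. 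This yields $4^2 = 16$ combinations. The 7 involving a $(q, 0, 0)$-factor are immediate since $(q, 0, 0) \cdot -$ is scalar multiplication by $q$ and commutes with $\partial^{\Cone{\Psi}}$. Moreover, by the graded commutativity established in Theorem \ref{thm:gc}, the Leibniz rule for $a \cdot b$ transfers formally to $b \cdot a$, so only 9 concrete cases remain.

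For each of these I would expand both sides using the differential $\partial^{\Cone{\Psi}}(f, g_1, g_2) = \bigl(\partial^\FF(f) + \Phi(g_2),\, \partial^\GG(g_1) - z g_2,\, -\partial^\GG(g_2)\bigr)$ read off from Remark \ref{rem:structure}, together with the ten product formulas of Definition \ref{defn:treeProd}, and then reduce term-by-term using four ingredients: the Leibniz rules already available on the Taylor resolutions $\FF$ and $\GG$; the chain-map identity $\partial^\FF \circ \Phi = \Phi \circ \partial^\GG$; Lemma \ref{lem:notDGmorph}, which rewrites $\Phi(g_1 g_2)$ as $\tfrac{1}{z}\Phi(g_1)\Phi(g_2)$ so that the $\tfrac{1}{z}$ factors in products \eqref{prod:fg1} and \eqref{prod:g1f} cancel against $z$-factors produced by $\mu^z$; and Lemma \ref{lem:partialMult}, which describes $(\partial^\FF(f), 0, 0) \cdot (0, g, 0)$ in the two branches $|f| > 1$ and $|f| = 1$ and thereby controls precisely when $\omega$-terms appear.

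The step I expect to be the main obstacle is the case $(f_1, 0, 0) \cdot (0, 0, g_2) = 0$ together with the related case $(0, g_1, 0) \cdot (0, 0, g_2) = (-1)^{|g_1|}(0, 0, g_1g_2)$. In the former, expanding $(f_1, 0, 0) \cdot \partial(0, 0, g_2) = (f_1, 0, 0)(\Phi(g_2), -zg_2, -\partial^\GG(g_2))$ produces a first-slot $f_1 \Phi(g_2)$ (from the $\Phi(g_2)$-summand) together with a $-z \cdot \tfrac{1}{z}f_1\Phi(g_2)$ (from the $-zg_2$-summand, via product \eqref{prod:fg1}) which cancel, leaving a residual third-slot $-z\,\omega_{f_1}(g_2)$. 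This residue vanishes for $|f_1| > 1$ by definition of $\omega$, but for $|f_1| = 1$ it must cancel against $\partial(f_1, 0, 0) \cdot (0, 0, g_2) = x_iz \cdot (0, 0, g_2) = (0, 0, x_i z g_2)$, which requires precisely the identity $z\,\omega_{f_1}(g) = -\partial^\FF(f_1)\, g$ for $f_1 \in \FF_1$ --- exactly how $\omega$ was designed. For $(0, g_1, 0) \cdot (0, 0, g_2)$, the $\mu^z$-summand of $\partial(0, 0, g_2)$ interacts with the $\GG$-product and, when $|g_2| = 1$, with an $\omega_{\Phi(g_2)}$-term, and the cancellation requires careful sign tracking together with the chain-map property of $\Phi$. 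Given the computational character of these verifications, I would relegate the full case analysis to a dedicated appendix, organizing the 9 cases by the type of the first factor and invoking Lemmas \ref{lem:notDGmorph} and \ref{lem:partialMult} to abbreviate the symbol manipulation.
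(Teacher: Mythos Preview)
Your proposal is correct and follows essentially the same approach as the paper: both reduce the $16$ ordered pairs to the $9$ non-scalar cases, invoke graded commutativity to handle the swapped orderings, and verify the remaining products by direct expansion using the Leibniz rules on $\FF$ and $\GG$, the chain-map property of $\Phi$, Lemma \ref{lem:notDGmorph}, and Lemma \ref{lem:partialMult}. The paper additionally splits several of these cases into four sub-cases according to whether $|f_1|=1$ or $|f_1|>1$ and $|g_2|=1$ or $|g_2|>1$, exactly as you anticipate in your discussion of the $\omega$-terms, and relegates the computations to an appendix just as you suggest.
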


\begin{proof}
Recall that we set $(1,0,0)$ to be the multiplicative identity. We need not check the Leibniz rule for elements of the form $(q,0,0)$ for $q \in Q$, since the differential $\partial^{\text{Cone}(\Psi)}$ is $Q$-linear.

For the ease of the reader, we recall that for homogeneous $(f_1,g_1,g_2) \in \text{Cone}(\Psi)$ of total degree $i$ we have
\[
\partial^{\text{Cone}(\Phi)}(f_1,g_1,g_2) =
\begin{cases}
    (\partial^{\FF}(f_1)+\partial^{\GG}(g_1), 0, 0), & i=1,\\
    (\partial^{\FF}(f_1)+\Phi(g_2), \partial^{\GG}(g_1)-zg_2, 0), & i=2,\\
    (\partial^{\FF}(f_1)+\Phi(g_2), \partial^{\GG}(g_1)-zg_2, -\partial^{\GG}(g_2)), &i \geq 3. 
\end{cases}
\]

The Leibniz rule holds for product \eqref{prod:ff}, $(f_1,0,0)(f_2,0,0)$, 
and follows from the dg structure of $\FF$:
    \begin{align*}
        \partial((f_1,0,0)(f_2,0,0)) &= \partial((f_1f_2,0,0)) \\
            &= (\partial^{\FF}(f_1f_2),0,0) \\
            &= (\partial^{\FF}(f_1)f_2,0,0)+ ((-1)^{|f_1|}f_1\partial^{\FF}(f_2),0,0) \\
            &= (\partial^{\FF}(f_1),0,0)(f_2,0,0) + (-1)^{|f_1|}(f_1,0,0)(\partial^{\FF}(f_2),0,0) \\
            &= \partial((f_1,0,0))(f_2,0,0)+ (-1)^{|f_1|}(f_1,0,0)\partial((f_2,0,0))
    \end{align*}

For product \eqref{prod:fg1}, $(f_1,0,0)(0,g_2,0)$, we consider four cases:

\textbf{Case 1:} Let $\partial(f_1)=x_iz$ and $\partial(g_2)=x_jy_{j,l}$.
        \begin{align*}
            \partial((f_1,0,0)(0,g_2,0)) &= \partial\left(\frac{1}{z}f_1\Phi(g_2),0,-x_ig_2\right) \\
                &= \left(\frac{1}{z}\partial^{\FF}(f_1 \Phi(g_2))+\Phi(-x_ig_2),-z(-x_ig_2),0\right) \\
                &= \left(\frac{x_i z}{z} \Phi(g_2) - \frac{1}{z}f_1\partial^{\FF}(\Phi(g_2)) - x_i \Phi(g_2), zx_i g_2\right) \\
                &= (-x_jy_{j,l}f_1,x_izg_{2},0) \\
                &= (x_iz,0,0)(0,g_2,0)-(x_jy_{j,l}f_1,0,0) \\
                &= (\partial^{\FF}(f_1),0,0)(0,g_2,0) + (-1)^{|f_1|}(f_1,0,0)(x_j y_{j,l},0,0) \\
                &= \partial((f_1,0,0))(0,g_2,0)+(-1)^{|f_1|}(f_1,0,0)\partial((0,g_2,0)).
        \end{align*}

\textbf{Case 2:} Let $\partial(f_1)=x_iz$ and $|g_2|>1$.
    \begin{align*}
        \partial((f_1,0,0)(0,g_2,0)) &= \partial\left(\frac{1}{z}f_1\Phi(g_2),0,-x_ig_2\right) \\
            &= \left(-\frac{1}{z} \partial^{\FF}(f_1 \Phi(g_2)) + \Phi(-x_i g_2), x_izg_2, x_i\partial^{\GG}(g_2)\right) \\
            &= \left(-\frac{1}{z} f_1 \partial^{\FF}(\Phi(g_2)), x_i z g_2, +x_i \partial^{\GG}(g_2)\right) \\
            &= (x_iz,0,0)(0,g_2,0) - \left(\frac{1}{z} f_1 \Phi(\partial^{\GG}(g_2)), 0, -x_i \partial^{\GG}(g_2)\right) \\
            &= (\partial^{\FF}(f_1),0,0)(0,g_2,0)-(f_1,0,0)(0,\partial^{\GG}(g_2),0) \\
            &= \partial((f_1,0,0))(0,g_2,0)+(-1)^{|f_1|}(f_1,0,0)\partial((0,g_2,0)).
    \end{align*}

\textbf{Case 3:} Let $|f_1|>1$, and $\partial(g_2)=x_jy_{j,l}$.
    \begin{align*}
        \partial((f_1,0,0)(0,g_2,0)) &= \partial\left(\frac{1}{z}f_1\Phi(g_2),0,0\right) \\
            &= \left(\frac{1}{z}\partial^{\FF}(f_1 \Phi(g_2)), 0, 0\right) \\
            &= \left(\frac{1}{z}\partial^{\FF}(f_1) \Phi(g_2), 0, 0\right) + (-1)^{|f_1|}\left(\frac{1}{z}f_1\partial^{\FF}(\Phi(g_2)),0,0\right) \\
            &= \left(\frac{1}{z}\partial^{\FF}(f_1) \Phi(g_2), 0, 0\right) + (-1)^{|f_1|}(x_jy_{j,l}f_1,0,0) \\
            &= (\partial^{\FF}(f_1),0,0)(0,g_2,0) + (-1)^{|f_1|}(f_1,0,0)(x_j y_{j,l},0,0) \\
            &= \partial((f_1,0,0))(0,g_2,0)+(-1)^{|f_1|}(f_1,0,0)\partial((0,g_2,0)).
    \end{align*}

\textbf{Case 4:} Let $|f_1|,|g_2|>1$.
    \begin{align*}
        \partial((f_1,0,0)(0,g_2,0)) &= \partial\left(\frac{1}{z}f_1\Phi(g_2),0,0\right) \\
            &= \left(\frac{1}{z} \partial^{\FF}(f_1\Phi(g_2)),0,0\right) \\
            &= \left(\frac{1}{z} \partial^{\FF}(f_1)\Phi(g_2),0,0\right) + (-1)^{|f_1|}\left(\frac{1}{z} f_1 \partial^{\FF}(\Phi(g_2)),0,0\right) \\
            &= \left(\frac{1}{z} \partial^{\FF}(f_1)\Phi(g_2),0,0\right) + (-1)^{|f_1|}\left(\frac{1}{z} f_1 \Phi(\partial^{\GG}(g_2)),0,0\right) \\
            &= (\partial^{\FF}(f_1),0,0)(0,g_2,0)+ (-1)^{|f_1|}(f_1,0,0)(0,\partial^{\GG}(g_2),0) \\
            &= \partial((f_1,0,0))(0,g_2,0)+(-1)^{|f_1|}(f_1,0,0)\partial((0,g_2,0)).
    \end{align*}

For product \eqref{prod:fg2}, $(f_1,0,0)(0,0,g_2)$, we check the same four cases. For these computations it is important to note that $|(0,0,g_2)|=|g_2|+1$.

\textbf{Case 1:} Let $\partial(f_1)=x_iz$ and $\partial(g_2)=x_jy_{j,l}$.
    \begin{align*}
        \partial((f_1,0,0)(0,0,g_2)) &= (0,0,0) \\
            &= (0,0,x_izg_2) - \left(f_1\Phi(g_2) - \frac{z}{z} f_1 \Phi(g_2),0, x_izg_2\right) \\
            &= (x_iz,0,0)(0,0,g_2)-(f_1,0,0)(\Phi(g_2),-zg_2,0)) \\
            &= \partial((f_1,0,0))(0,0,g_2)+(-1)^{|f_1|}(f_1,0,0)\partial((0,0,g_2)).
    \end{align*}

\textbf{Case 2:} Let $\partial(f_1)=x_iz$ and $|g_2|>1$.
    \begin{align*}
        \partial((f_1,0,0)(0,0,g_2)) &= (0,0,0) \\
            &= (0,0,x_izg_2) - \left(f_1 \Phi(g_2) - \frac{z}{z} f_1 \Phi(g_2), 0, x_izg_2\right) \\
            &= (x_iz,0,0)(0,0,g_2)-(f_1,0,0)(\Phi(g_2),-zg_2,-\partial^{\GG}(g_2))) \\
            &= \partial((f_1,0,0))(0,0,g_2)+(-1)^{|f_1|}(f_1,0,0)\partial((0,0,g_2)).
    \end{align*}

\textbf{Case 3:} Let $|f_1|>1$ and $\partial(g_2)=x_jy_{j,l}$.
    \begin{align*}
        \partial((f_1,0,0)(0,0,g_2)) &=(0,0,0) \\
            &= (0,0,0) + (-1)^{|f_1|}\left(f_1 \Phi(g_2) - \frac{z}{z} f_1 \Phi(g_2),0,0\right) \\
            &= (\partial^{\FF}(f_1),0,0)(0,0, g_2)+(-1)^{|f_1|}(f_1,0,0)(\Phi(g_2),-z g_2,0) \\
            &= \partial((f_1,0,0))(0,0,g_2)+(-1)^{|f_1|}(f_1,0,0)\partial((0,0,g_2)).
    \end{align*}

\textbf{Case 4:} Let $|f_1|,|g_2|>1$.
    \begin{align*}
        \partial((f_1,0,0)(0,0,g_2)) &= (0,0,0) \\
            &= (0,0,0) + (-1)^{|f_1|} \left(f_1 \Phi(g_2) - \frac{z}{z} f_1 \Phi(g_2),0,0\right) \\
            &= (\partial^{\FF}(f_1),0,0)(0,0,g_2)+(-1)^{|f_1|}(f_1,0,0)(\Phi(g_2),-zg_2,-\partial^{\GG}(g_2)) \\
            &= \partial((f_1,0,0))(0,0,g_2)+(-1)^{|f_1|}(f_1,0,0)\partial((0,0,g_2)).
    \end{align*}

Similar to the first product, the Leibniz rule holds for product \eqref{prod:gg}, $(0,g_1,0)(0,g_2,0)$, follows from the dg algebra structure of $\GG$ with a small caveat. We observe that
\begin{align*}
    \partial((0,g_1,0))(0,g_2,0) &= \begin{cases} (x_i y_{i,l},0,0)(0,g_2,0) & \partial(g_1) = x_i y_{i,l} \\ (0,\partial^{\GG}(g_1), 0) (0,g_2,0) & |g_1| > 1 \end{cases} \\
     &= \begin{cases} (0,x_i y_{i,l}g_2,0) & \partial(g_1) = \{x_i y_{i,l}\} \\ (0,\partial^{\GG}(g_1)g_2,0) & |g_1| > 1 \end{cases} \\
     &= (0,\partial^{\GG}(g_1)g_2,0)
\end{align*}
and thus we have the following:
\begin{align*}
    \partial((0,g_1,0)(0,g_2,0)) &= \partial((0,g_1g_2,0)) \\
        &= (0,\partial^{\GG}(g_1g_2),0) \\
        &= (0,\partial^{\GG}(g_1)g_2,0)+ (0,(-1)^{|g_1|}g_1\partial^{\GG}(g_2),0) \\
        &= \partial((0,g_1,0))(0,g_2,0)+ (-1)^{|g_1|}(0,g_1,0)\partial((0,g_2,0)).
\end{align*}

For product \eqref{prod:g1g2}, $(0,g_1,0)(0,0,g_2)$, recall $|(0,0,g_2)|=|g_2|+1$. We consider four cases. If $\partial(g_2) = x_jy_{j,k}$, then $\Phi(g_2)=y_{j,k}f_{\{x_jz\}}$ and $\partial^{\FF}(f_{\{x_jz\}})=x_jz$, hence
\[\omega_{\Phi(g_2)}(g_1) = y_{j,k}\omega_{f_{\{x_jz\}}}(g_1) = - y_{j,k} x_j g_1.\] We use this in cases 1 and 3 below.

\textbf{Case 1:} Let $\partial(g_1)=x_iy_{i,l}$ and $\partial(g_2)=x_jy_{j,k}$.
    \begin{align*}
        \partial((0,g_1,0)(0,0,g_2)) &= \partial(-(0, 0, g_1g_2)) \\
            &= (-\Phi(g_1g_2), zg_1g_2, \partial^{\GG}(g_1g_2)) \\
            &= \left(-\frac{1}{z} \Phi(g_1) \Phi(g_2), z g_1 g_2, \partial^{\GG}(g_1) g_2 - g_1 \partial^{\GG}(g_2)\right) \\
            &= (0,0,x_iy_{i,l}g_2) - \left(\frac{1}{z} \Phi(g_1) \Phi(g_2), -z g_1 g_2, -y_{j,k} x_j g_1\right) \\
            % &= (x_iy_{i,l},0,0)(0,0,g_2)-(0,g_1,0)(\Phi(g_2),-zg_2,0) \\
            &= \partial_1((0,g_1,0))(0,0,g_2) + (-1)^{|g_1|}(0,g_1,0)\partial((0,0,g_2)).
    \end{align*}

\textbf{Case 2:} Let $\partial(g_1)=x_iy_{i,l}$ and $|g_2|>1$.
    \begin{align*}
        \partial((0,g_1,0)(0,0,g_2)) &= -\partial((0,0,g_1g_2)) \\
            &= - \left(\Phi(g_1g_2), -zg_1g_2, - \partial^{\GG}(g_1g_2)\right) \\
            &= \left(\frac{-1}{z} \Phi(g_1) \Phi(g_2), z g_1 g_2, \partial^{\GG}(g_1)g_2 - g_1 \partial^{\GG}(g_2)\right) \\
            &= (0,0,x_i y_{i,\ell}g_2) - \left(\frac{1}{z} \Phi(g_1) \Phi(g_2), - z g_1 g_2, g_1 \partial^{\GG}(g_2) \right) \\
            &= (x_iy_{i,l},0,0)(0,0,g_2)-(0,g_1,0)(\Phi(g_2),-zg_2,-\partial^{\GG}(g_2)) \\
            &= \partial_1((0,g_1,0))(0,0,g_2) + (-1)^{|g_1|} (0,g_1,0)\partial((0,0,g_2)).
    \end{align*}

\textbf{Case 3:} Let $|g_1|>1$ and $\partial(g_2)=x_jy_{j,k}$.
    \begin{align*}
        \partial((0,g_1,0)(0,0,g_2)) &= (-1)^{|g_1|}\partial(0,0,g_1g_2) \\
            &= (-1)^{|g_1|}(\Phi(g_1g_2),-zg_1g_2,-\partial^{\GG}(g_1g_2)) \\
            &= (-1)^{|g_1|} \left(\frac{1}{z} \Phi(g_1) \Phi(g_2), - z g_1 g_2, -\partial^{\GG}(g_1)g_2 - (-1)^{|g_1|} g_1 \partial^{\GG}(g_2)\right) \\
            &= (0,\partial^{\GG}(g_1),0)(0,0,g_2) - (-1)^{|g_1|}\left(\frac{-1}{z} \Phi(g_1) \Phi(g_2), z g_1 g_2, (-1)^{|g_1|} y_{j,k}x_j g_1 \right) \\
            &= (0,\partial^{\GG}(g_1),0)(0,0,g_2)+(-1)^{|g_1|}(0,g_1,0)(\Phi(g_2),-zg_2,0) \\
            &= \partial((0,g_1,0))(0,0g_2)+ (-1)^{|g_1|}(0,g_1,0)\partial((0,0,g_2)).
    \end{align*}

\textbf{Case 4:} Let $|g_1|,|g_2|>1$.
    \begin{align*}
        \partial((0,g_1,0)(0,0,g_2)) &= (-1)^{|g_1|}\partial((0,0,g_1g_2)) \\
            &= (-1)^{|g_1|}(\Phi(g_1g_2),-zg_1g_2,-\partial^{\GG}(g_1g_2)) \\
            &= (-1)^{|g_1|} \left(\Phi(g_1 g_2), - z g_1 g_2, - \partial^{\GG}(g_1)g_2 -(-1)^{|g_1|} g_1 \partial^{\GG}(g_2)\right) \\
            &= (0,\partial^{\GG}(g_1),0)(0,0,g_2) - (-1)^{|g_1|}\!\left(\frac{-1}{z} \Phi(g_1)\Phi(g_2), z g_1 g_2, (-1)^{|g_1|} g_1 \partial^{\GG}(g_2)\right) \\
            &= (0,\partial^{\GG}(g_1),0)(0,0,g_2)+(-1)^{|g_1|}(0,g_1,0)(\Phi(g_2),-zg_2,-\partial^{\GG}(g_2)) \\
            &= \partial((0,g_1,0))(0,0g_2)+ (-1)^{|g_1|}(0,g_1,0)\partial((0,0,g_2)).
    \end{align*}

For product \eqref{prod:shiftg}, $(0,0,g_1)(0,0,g_2)$, recall $|(0,0,g_1)| = |g_1| + 1$ and $|(0,0,g_2)|=|g_2|+1$. We consider our four cases.

\textbf{Case 1:} Let $\partial(g_1)=x_iy_{i,l}$ and $\partial(g_2)=x_jy_{j,k}$.
    \begin{align*}
        \partial((0,0,g_1)(0,0,g_2)) &= (0,0,0) \\
            &= -(0,0,-zg_1g_2)+(0,0,-zg_1g_2) \\
            &= (\Phi(g_1),-zg_1,0)(0,0,g_2)+(0,0,g_1)(\Phi(g_2),-zg_2,0) \\
            &= \partial((0,0,g_1))(0,0,g_2)+(-1)^{|g_1|+1}(0,0,g_1)\partial((0,0,g_2)).
    \end{align*}
        
\textbf{Case 2:} Let $\partial(g_1)=x_iy_{i,l}$ and $|g_2|>1$.
    \begin{align*}
        \partial((0,0,g_1)(0,0,g_2)) &= (0,0,0) \\
            &= -(0,0,-zg_1g_2) + (0,0,-zg_1g_2) \\
            &= (\Phi(g_1),-zg_1,0)(0,0,g_2)+(0,0,g_1)(\Phi(g_2),-zg_2,-\partial^{\GG}(g_2)) \\
            &= \partial((0,0,g_1))(0,0,g_2)+ (-1)^{|g_1|+1}(0,0,g_1)\partial((0,0,g_2)).
    \end{align*}

\textbf{Case 3:} The case $|g_1|>1$ and $\partial(g_2)=x_jy_{j,k}$ follows from the previous using graded commutativity.

\textbf{Case 4:} Let $|g_1|, |g_2| >1$.
    \begin{align*}
        \partial((0,0,g_1)(0,0,g_2)) &= (0,0,0) \\
            &= (-1)^{|g_1|}(0,0,-zg_1g_2) - (-1)^{|g_1|}(0,0,-zg_1g_2) \\
            &= (\Phi(g_1),-zg_1,-\partial^{\GG}(g_1))(0,0,g_2)-(-1)^{|g_1|}(0,0,g_1)( \Phi(g_2),-zg_2,-\partial^{\GG}(g_2)) \\
            &= \partial((0,0,g_1))(0,0,g_2)+ (-1)^{|g_1|+1}(0,0,g_1)\partial((0,0,g_2)).
    \end{align*}

Product \eqref{prod:g1f} follows from product \eqref{prod:fg1} using graded commutativity. Similarly, products \eqref{prod:g2f} and \eqref{prod:g2g1} follow from \eqref{prod:fg2} and \eqref{prod:g1g2}, respectively.

\end{proof}

\section*{Declarations on Funding, Competing Interests, and Data Availability}

No funds, grants, or other support was received for conducting this research, and the authors have no relevant financial or non-financial interests to disclose.  No data was used in the process.  This work represents the views of the authors and is not to be regarded as representing the opinions of the Center for Naval Analyses or any of its sponsors.  

\printbibliography

\end{document}